\providecommand{\bigsqcap}{%
  \mathop{%
    \mathpalette\@updown\bigsqcup
  }%
}
\newcommand*{\@updown}[2]{%
  \rotatebox[origin=c]{180}{$\m@th#1#2$}%
}
\newtheorem{theorem}{Theorem}[section]
\newtheorem{lemma}[theorem]{Lemma}
\newtheorem{corollary}[theorem]{Corollary}
\newtheorem{proposition}[theorem]{Proposition}
\newtheorem{fact}[theorem]{Fact}
\theoremstyle{definition}
\newtheorem{definition}[theorem]{Definition}
\newtheorem{example}[theorem]{Example}
\newtheorem{remark}[theorem]{Remark}
\newtheorem{convention}[theorem]{Convention}
\newtheorem{question}[theorem]{Question}
\def\tp{\operatorname{tp}}
\def\supp{\operatorname{supp}}
\def\inv{\operatorname{inv}}
\def\amal{\operatorname{Amal}}
\def\am{\operatorname{Am}}
\def\E{\operatorname{E}}
\def\Aut{\operatorname{Aut}}
\def\Ind{\setbox0=\hbox{$x$}\kern\wd0\hbox to 0pt{\hss$\mid$\hss}
\lower.9\ht0\hbox to 0pt{\hss$\smile$\hss}\kern\wd0}
\def\Notind{\setbox0=\hbox{$x$}\kern\wd0\hbox to 0pt{\mathchardef
\nn=12854\hss$\nn$\kern1.4\wd0\hss}\hbox to
0pt{\hss$\mid$\hss}\lower.9\ht0 \hbox to 0pt{\hss$\smile$\hss}\kern\wd0}
\title{An Invitation to Extension Domination}
\author[K. Gannon]{Kyle Gannon}
\author[J. Ye]{Jinhe Ye}
\date{\today}
\address{Beijing International Center for Mathematical Research\\
Peking University\\
Beijing, China}
\email{kgannon@bicmr.pku.edu.cn}
\address{Mathematical Institute\\University of Oxford\\
Oxford, OX2 6GG, UK}
\email{jinhe.ye@maths.ox.ac.uk}
\thanks{The second author was partially supported by GeoMod AAPG2019 (ANR-DFG), Geometric and
  Combinatorial Configurations in Model Theory. Both authors would like to express their gratitude towards Rosario Mennuni for his helpful comments.}
\begin{document}

\begin{abstract} Motivated by the theory of domination for types, we introduce a notion of domination for Keisler measures called extension domination. We argue that this variant of domination behaves similarly to its type setting counterpart. We prove that extension domination extends domination for types and that it forms a preorder on the space of global Keisler measures. We then explore some basic properties related to this notion (e.g. approximations by formulas, closure under localizations, convex combinations). We also prove a few preservation theorems and provide some explicit examples. 
\end{abstract}

\maketitle
\vspace{-15pt} 
\section{Introduction} 
The notion of domination was introduced by Lascar in \cite{lascar1982ordre} in the context of stable theories. This notion again appears in \cite{lascar1990stability} which has been studied in more detail by Pillay~\cite{GST} . 
Later, it was studied in the general setting of complete first order theories ~\cite{rosario-inv-dom} and extensively studied in the specific context of valued fields ~\cite{HHM-stable-domination}. Loosely speaking, \textit{$p(x)$ dominates $q(y)$} captures the idea that $p$, along with a small amount of auxiliary \textit{$xy$-data}, controls $q$. This relation forms a preorder on the space of global types. In practice, sometimes one can understand the space of invariant types by studying a collection of \textit{nice} invariant types together with the domination relation. For example, the space of invariant types in o-minimal structures and henselian valued fields~\cite{rosario-omin,rosario-hils} has been studied in precisely this way. Therefore, the question of whether a notion of domination exists for Keisler measures naturally arises. Moreover, in the context of valued fields, a useful tool in studying the geometry of definable sets is the notion of \textit{stable domination}. An extensive study on this subject has been made in~\cite{HHM-stable-domination} which contributed to the understanding of Berkovich analytification of  varieties~\cite{HruLoe-tame-top}. It is conceivable that understanding domination of measures in the valued field context may yield an understanding of certain associated geometric spaces. More specifically, as suggested by the previous works on types~\cite{HruLoe-tame-top,beautiful-pairs}, it would be nice to be able to internalize the space of generically stable measures. To put a more concrete question:
\begin{question}
Is every generically stable measure in ACVF dominated by a collection of stably dominated types?
\end{question}

    A positive answer would suggest that generically stable measures can indeed be approached as small projective limits of definable sets with some small auxilary data, which would serve as the first step towards an analysis of generically stable measures following~\cite{HruLoe-tame-top}.

The notion of extension domination is also conceptually related to compact domination (see \cite{NIP-III,simon2017vc} and \cite[Chapter 8]{simon2015guide}). Compact domination expresses the idea that a Keisler measure can be \textit{controlled} by an auxiliary measure over a compact space. Extension domination, on the other hand, attempts to capture the idea of when one Keisler measure controls another. With compact domination, a measure $h$ on a compact space $K$ lifts uniquely to an invariant Keisler measure. With extension domination, if $\mu_{x}$ extension dominates $\nu_{y}$, then there exists a unique Keisler measure $\omega_{xy}$ whose projection to $x$ is $\mu_x$ and projection to $y$ is $\nu_y$.

In this paper, we introduce \textit{extension domination} for Keisler measures as a natural generalization of domination for types. We show that this notion extends domination for types (Proposition \ref{dom:exdom}) and forms a preorder on the space of global Keisler measures (Theorem \ref{Thm:pre}). Additionally, any Keisler measure dominates (definable) pushforwards of itself (Proposition \ref{pushforward}). Extension domination also admits a natural approximation property: If $\mu$ extension dominates $\nu$, then there is a collection of formulas which forces this to be true via \textit{squeezing} (Proposition \ref{approx:1} and Theorem \ref{approx:2}). Moreover, several properties of measures such as invariance, smoothness, and finite satisfiability are preserved under extension domination (Section 4). Akin to the classical setting where realized types form the bottom tier of the domination relation, the collection of smooth measures forms the bottom tier of the extension domination relation (Proposition \ref{smooth:pres} and Proposition \ref{smooth:2}). 

If domination is witnessed by a separated amalgam (such as the restriction of a Morley product), then one can show that extension domination is closed under localizations (Proposition \ref{local:ext}). The role of the separated amalgam allows us more control over our computation and leads to the notion of \textit{amalgam deciphering}. A measure $\mu$ amalgam deciphers a measure $\nu$ if $\mu$ dominates $\nu$ over the class of separated amalgams (instead of the class of all possible extensions). The general behavior of this notion remains somewhat mysterious, yet it interacts well with localizations (Proposition \ref{amal:local}) and convex combinations (Proposition \ref{prop:conv}).


The paper is structured as follows: In Section~\ref{sec:prelim}, we recall some preliminaries and basic facts about domination, finitely additive measures, and Keisler measures. In Section~\ref{sec:extension}, we introduce and study extension domination. In Section~\ref{sec:preserve}, we prove some preservation theorems.  In Section~\ref{sec:amalgam}, we study separated amalgams and amalgam deciphering. In Section 6, we provide some explicit examples of extension domination. 

\section{Preliminaries}\label{sec:prelim}

For the most part, our notation is standard. If $r,s$ are real numbers and $\epsilon > 0$, we write $r \approx_{\epsilon} s$ to mean $|r - s| < \epsilon$. We let $T$ denote a complete first order theory in a language $\mathcal{L}$ and $\mathcal{U}$ denote a monster model of $T$. We use $x$, $y$, and $z$ to denote finite tuples of variables. If $A \subseteq \mathcal{U}$, we let $\mathcal{L}_{x}(A)$ denote the Boolean algebra of formulas with free variable(s) $x$ and parameters from $A$ (modulo logical equivalence by $T$). We also identify $\mathcal{L}_{x}(A)$ with the Boolean algebra of $A$-definable subsets of $\mathcal{U}^{x}$ without comment. We let $S_{x}(A)$ be the collection of types over $A$ in free variable(s) $x$. 

If $\varphi(x) \in \mathcal{L}_{x}(A)$, we let $[\varphi(x)] := \{p \in S_{x}(A): \varphi(x) \in p\}$. If $\varphi(x)$ is a formula, $\varphi^{1}(x) = \varphi(x)$ and $\varphi^{0}(x) = \neg \varphi(x)$. 

\subsection{Domination for types}
We first recall the definition of domination for global types. We refer the reader to~\cite{rosario-inv-dom} for additional details.

\begin{definition}\label{def:type-dom}
Let $p\in S_x(\mathcal{U})$, $q\in S_y(\mathcal{U})$ and $A$ be a small subset of $\mathcal{U}$. We say that \emph{$p$ dominates $q$ (over $A$)} if there is some $r\in S_{xy}(A)$ such that  $p|_{A} \subseteq r$ and $p\cup r\vdash q$. We write $p\geq_{D,A}q$ to mean ``$p$ dominates $q$ over $A$" and $p \geq_{D} q$ to mean ``there exists some small set $A$ such that $p \geq_{D,A}q$". Finally, if $r$ is as above we say that the type $r$ \emph{witnesses $p\geq_{D,A}q$} or simply \emph{witnesses domination}.
\end{definition} 

The next fact follows directly from compactness.

\begin{fact}\label{type:approx} Let $p \in S_{x}(\mathcal{U})$, $q \in S_{y}(\mathcal{U})$, $A$ be a small subset of $\mathcal{U}$, and $r \in S_{xy}(A)$ such that  $r$ witnesses $p \geq_{D,A} q$. Then for every $\psi(y) \in q$, there exists $\varphi(x) \in p$ and $\theta(x,y) \in r$ such that $\varphi(x) \wedge \theta(x,y) \subseteq  \psi(y) \wedge x = x$.  
\end{fact}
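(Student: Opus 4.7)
The plan is to apply compactness directly to the relation $p \cup r \vdash q$ supplied by the definition of domination, and then translate the syntactic statement into the set-theoretic form required by the conclusion.

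First, I would fix $\psi(y) \in q$ and invoke the hypothesis that $r$ witnesses $p \geq_{D,A} q$, which by Definition~\ref{def:type-dom} means $p \cup r \vdash \psi(y)$. By compactness of first-order logic, there are finitely many formulas $\varphi_{1}(x), \ldots, \varphi_{n}(x) \in p$ and $\theta_{1}(x,y), \ldots, \theta_{m}(x,y) \in r$ such that
\[
\varphi_{1}(x) \wedge \cdots \wedge \varphi_{n}(x) \wedge \theta_{1}(x,y) \wedge \cdots \wedge \theta_{m}(x,y) \vdash \psi(y).
\]

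Next, I would use that $p$ and $r$ are (complete, hence deductively closed) types, so the finite conjunctions $\varphi(x) := \bigwedge_{i=1}^{n} \varphi_{i}(x)$ and $\theta(x,y) := \bigwedge_{j=1}^{m} \theta_{j}(x,y)$ lie in $p$ and $r$ respectively. We then have $\varphi(x) \wedge \theta(x,y) \vdash \psi(y)$.

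Finally, I would translate this implication into the claimed containment. Viewing $\varphi(x)$ as the cylinder $\varphi(x) \wedge (y = y)$ inside $\mathcal{U}^{xy}$, and $\psi(y)$ as the cylinder $\psi(y) \wedge (x = x)$, the implication above is exactly the containment $\varphi(x) \wedge \theta(x,y) \subseteq \psi(y) \wedge (x = x)$ as subsets of $\mathcal{U}^{xy}$, which is the desired conclusion. There is no real obstacle here — the only subtlety worth flagging is the convention identifying a formula in one set of variables with its cylindrification over the larger tuple $xy$, which is what makes the $x = x$ factor appear in the statement.
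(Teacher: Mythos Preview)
Your proof is correct and matches the paper's approach exactly: the paper simply states that the fact ``follows directly from compactness'' without giving further details, and your argument is precisely the standard compactness argument that unpacks this.
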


\begin{remark}\label{rmk:type-dom} 
We remark that in the definition of domination in~\cite{rosario-inv-dom}, there is an additional assumption that $r\in S_{pq}(A):=\{r' \in S_{xy}(A):r'\supseteq p|_A\cup q|_A\}.$ The assumption that $r\supseteq q|_A$ is not strictly necessary since this property is a consequence of domination as defined above.
\end{remark}

\begin{remark}
By compactness, Definition~\ref{def:type-dom} is equivalent to the following: $p\geq_{D,A} q$ if and only if there is some $r\in S_{xy}(A)$, such that $r\supseteq p|_A$ and for any $w\in S_{xy}(\mathcal{U})$ such that $w$ extends both $p$ and $r$, $w$ extends $q$. This rephrasing of the definition motivates our definition for extension domination (see Definition~\ref{def:ext-dom}). 
\end{remark}

\subsection{Finitely additive measures}
We now recall some preliminaries about finitely additive measures. A general reference for these facts can be found in \cite{rao1983theory}.

\begin{fact}[{\cite[Theorem 3.6.1]{rao1983theory}}]\label{fact:comm-ext} Let $X$ be a set, and $\mathcal{A}$ and $\mathcal{B}$ be Boolean algebras on $X$ which contain $\{X\}$. Let  $\mu_1$ and $\mu_2$ be two bounded (positive) finitely additive measures on $\mathcal{A}$ and $\mathcal{B}$ respectively. Let $\mathcal{C}$ be a Boolean algebra on $X$ containing both $\mathcal{A}$ and $\mathcal{B}$. Then there exists a bounded (positive) finitely additive measure $\mu_{3}$ on $\mathcal{C}$ which is a common extension of $\mu_1$ and $\mu_2$ if and only if for every $A \in \mathcal{A}$ and $B \in \mathcal{B}$, if $A \subseteq B$, then $\mu_1(A) \leq \mu_2(B)$ and if $B \subseteq A$, then $\mu_2(B) \leq \mu_1(A)$. 
\end{fact}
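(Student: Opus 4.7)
The forward direction is immediate from monotonicity: if $\mu_3$ is a common extension and $A \subseteq B$ with $A \in \mathcal{A}$, $B \in \mathcal{B}$, then $\mu_1(A) = \mu_3(A) \leq \mu_3(B) = \mu_2(B)$, and the symmetric case is analogous.

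For the converse, first extract a consequence of the hypothesis: applied with $A = B \in \mathcal{A} \cap \mathcal{B}$, both inequalities force $\mu_1 = \mu_2$ on $\mathcal{A} \cap \mathcal{B}$. The plan is then a Hahn--Banach argument on simple functions. Let $V_{\mathcal{A}}, V_{\mathcal{B}} \subseteq \ell^{\infty}(X)$ denote the spaces of real-valued $\mathcal{A}$- and $\mathcal{B}$-simple functions, set $V := V_{\mathcal{A}} + V_{\mathcal{B}}$, and define $\Lambda \colon V \to \mathbb{R}$ by $\Lambda(f_1 + f_2) := \int f_1\, d\mu_1 + \int f_2\, d\mu_2$. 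Two claims require checking. \emph{Well-definedness}: if $h \in V_{\mathcal{A}} \cap V_{\mathcal{B}}$, then each level set of $h$ lies in both $\mathcal{A}$ and $\mathcal{B}$, hence in $\mathcal{A} \cap \mathcal{B}$, so $\int h\, d\mu_1 = \int h\, d\mu_2$ by the noted agreement. \emph{Positivity}: if $f_1 + f_2 \geq 0$, set $g := -f_1 \leq f_2$; a constant shift (valid since $\mu_1(X) = \mu_2(X)$) reduces to $g, f_2 \geq 0$, and the layer-cake representations of $g$ and $f_2$ over a common refinement of their value sets, combined with the hypothesis applied to the nested level sets $\{g \geq t\} \subseteq \{f_2 \geq t\}$, give $\int g\, d\mu_1 \leq \int f_2\, d\mu_2$, i.e., $\Lambda(f_1 + f_2) \geq 0$.

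With $\Lambda$ linear and positive on $V$, introduce the sublinear majorant $p(h) := \inf\{\Lambda(v) : v \in V,\ v \geq h\}$ on $\ell^{\infty}(X)$; it is finite because $\|h\|_{\infty}\cdot\chi_X \in V$, and $\Lambda \leq p$ on $V$ by positivity. Hahn--Banach extends $\Lambda$ to a linear functional $\tilde{\Lambda} \leq p$ on all of $\ell^{\infty}(X)$, automatically positive since $\tilde{\Lambda}(-h) \leq p(-h) \leq 0$ for $h \geq 0$. Setting $\mu_3(C) := \tilde{\Lambda}(\chi_C)$ for $C \in \mathcal{C}$ then produces the desired bounded positive finitely additive common extension, with the restrictions to $\mathcal{A}$ and $\mathcal{B}$ recovering $\mu_1$ and $\mu_2$ by construction.

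The main obstacle is the positivity claim, specifically the layer-cake manipulation: one must carefully decompose $g$ and $f_2$ over a common refinement of their finitely many values, pair up nested level sets atom-by-atom, and argue that the weighted sums of the resulting inequalities $\mu_1(\{g \geq t\}) \leq \mu_2(\{f_2 \geq t\})$ still give the desired bound. Granting this, the remaining pieces---well-definedness of $\Lambda$, sublinearity of $p$, and the Hahn--Banach extension---are standard.
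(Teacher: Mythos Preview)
The paper does not supply its own proof of this statement: it is recorded as a \emph{Fact} with a citation to \cite[Theorem 3.6.1]{rao1983theory}, and is used throughout as a black box. There is therefore nothing in the paper to compare your argument against.

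That said, your Hahn--Banach approach via simple functions is a standard and correct route to this result. The well-definedness step is fine once you note (as you do) that a function which is simultaneously $\mathcal{A}$-simple and $\mathcal{B}$-simple has all its level sets in $\mathcal{A}\cap\mathcal{B}$, so that the two integrals agree. The positivity step---your acknowledged ``main obstacle''---is also fine: after the constant shift (legitimate because the hypothesis applied with $A=B=X$ forces $\mu_1(X)=\mu_2(X)$), the layer-cake identity for nonnegative simple functions over a common refinement of values reduces the inequality $\int g\,d\mu_1\le\int f_2\,d\mu_2$ to a finite nonnegative combination of the hypothesis inequalities $\mu_1(\{g\ge t\})\le\mu_2(\{f_2\ge t\})$. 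The remaining steps (sublinearity of $p$, Hahn--Banach, positivity and boundedness of the extension) are routine, exactly as you say.
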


\begin{corollary}\label{Cor:exist} Let $X$ be a set, and $\mathcal{A}$ and $\mathcal{B}$ be Boolean algebras of sets on $X$ which contain $\{X\}$. Let $\mu_1$ and $\mu_2$ be two finitely additive probability measures on $\mathcal{A}$ and $\mathcal{B}$ respectively. Let $\mathcal{C}$ be a Boolean algebra on $X$ containing both $\mathcal{A}$ and $\mathcal{B}$. Then there exists a finitely additive probability measure $\mu_{3}$ on $\mathcal{C}$ which is a common extension of $\mu_1$ and $\mu_2$ if for every $A \in \mathcal{A}$ and $B \in \mathcal{B}$, if $A \subseteq B$, then $\mu_1(A) \leq \mu_2(B)$. 
\end{corollary}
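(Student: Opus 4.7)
The plan is to reduce the statement to Fact~\ref{fact:comm-ext} by showing that, under the probability measure assumption, the one-sided hypothesis ``$A \subseteq B \implies \mu_1(A) \leq \mu_2(B)$'' automatically yields the symmetric hypothesis ``$B \subseteq A \implies \mu_2(B) \leq \mu_1(A)$.'' Once we have this, Fact~\ref{fact:comm-ext} produces a bounded positive finitely additive common extension $\mu_3$ on $\mathcal{C}$; normalization is automatic because $\mu_3(X) = \mu_1(X) = 1$.

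The key computation is the following. Suppose $B \in \mathcal{B}$ and $A \in \mathcal{A}$ with $B \subseteq A$. Then $X \setminus A \subseteq X \setminus B$, and since $\mathcal{A}$ and $\mathcal{B}$ each contain $\{X\}$ and are Boolean algebras, the complements $X \setminus A$ and $X \setminus B$ lie in $\mathcal{A}$ and $\mathcal{B}$ respectively. Applying the given one-sided inequality to the pair $(X \setminus A, X \setminus B)$ gives
\[
\mu_1(X \setminus A) \leq \mu_2(X \setminus B).
\]
Using that both $\mu_1$ and $\mu_2$ are probability measures (so $\mu_1(X \setminus A) = 1 - \mu_1(A)$ and $\mu_2(X \setminus B) = 1 - \mu_2(B)$), this rearranges to $\mu_2(B) \leq \mu_1(A)$, which is exactly the missing condition in Fact~\ref{fact:comm-ext}.

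There is no real obstacle here; the only subtlety worth flagging is that the probability normalization is essential (it is what makes the complement trick work), and one needs $\{X\} \in \mathcal{A} \cap \mathcal{B}$ so that complements stay inside the respective Boolean algebras. After these observations, the corollary is a one-line invocation of Fact~\ref{fact:comm-ext}.
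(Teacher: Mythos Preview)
Your proof is correct and follows exactly the same approach as the paper: you take complements to convert the one-sided hypothesis into the symmetric two-sided hypothesis of Fact~\ref{fact:comm-ext}, then observe that the resulting common extension is automatically a probability measure since $\mu_3(X) = \mu_1(X) = 1$.
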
 

\begin{proof} By Fact \ref{fact:comm-ext}, it suffices to show that for every $A \in \mathcal{A}$ and $B \in \mathcal{B}$ such that $B \subseteq A$, $\mu_2(B) \leq \mu_1(A)$. Suppose $B \subseteq A$. Then $A^{c} \subseteq B^{c}$ and so $\mu_1(A^{c}) \leq \mu_2(B^{c})$. Hence $1 - \mu_1(A) \leq 1 - \mu_2(B)$ and therefore $\mu_2(B) \leq \mu_1(A)$. 

By Fact~\ref{fact:comm-ext}, there is a bounded (positive) finitely additive measure $\mu_3$ on $\mathcal{C}$ which extends both $\mu_1$ and $\mu_2$. Notice  $\mu_3$ is a probability measure since $\mu_3(X) = \mu_1(X) = 1$. 
\end{proof} 

\begin{fact}[{\cite[Theorem 3.3.3]{rao1983theory}}]\label{fact:charges-r}  Let $X$ be a set, $\mathcal{A}$ and $\mathcal{B}$ be Boolean algebras of sets on $X$ which contain $\{X\}$, $\mathcal{A} \subseteq \mathcal{B}$,  and  $\mu$ be a finitely additive probability measure on $\mathcal{A}$. Fix $B \in \mathcal{B}$ and let
\begin{equation*} r_1 = \sup \{\mu(A): A \in \mathcal{A}, A \subseteq B\} \text{ and } r_2 = \inf\{\mu(A): A \in \mathcal{A}, B \subseteq A\}. 
\end{equation*} 
Then $r_1 \leq r_2$ and for any $d$ such that $r_1 \leq d \leq r_2$, there exists a finitely additive probability measure $\mu_d$ on $\mathcal{B}$ such that $\mu_{d}|_{\mathcal{A}} = \mu$ and $\mu_{d}(B) = d$. 
\end{fact}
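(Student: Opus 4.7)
The plan is to produce $\mu_d$ as a common extension of $\mu$ and a simple two-atom measure defined on the Boolean algebra generated by $B$, then invoke Corollary~\ref{Cor:exist}. The inequality $r_1 \le r_2$ is immediate from monotonicity: for any $A_1, A_2 \in \mathcal{A}$ with $A_1 \subseteq B \subseteq A_2$ one has $A_1 \subseteq A_2$, so $\mu(A_1) \le \mu(A_2)$; taking the supremum over $A_1$ and the infimum over $A_2$ yields $r_1 \le r_2$.

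Now fix $d \in [r_1, r_2]$. I would consider the finite Boolean algebra $\mathcal{A}_B = \{\emptyset, B, B^c, X\} \subseteq \mathcal{B}$ and define $\mu_B \colon \mathcal{A}_B \to [0,1]$ by $\mu_B(\emptyset) = 0$, $\mu_B(B) = d$, $\mu_B(B^c) = 1 - d$, $\mu_B(X) = 1$; this is plainly a finitely additive probability measure on $\mathcal{A}_B$. The aim is then to apply Corollary~\ref{Cor:exist} to $\mu$ on $\mathcal{A}$ and $\mu_B$ on $\mathcal{A}_B$ with target algebra $\mathcal{C} = \mathcal{B}$, so the only thing to verify is the compatibility hypothesis: whenever $A \in \mathcal{A}$, $A' \in \mathcal{A}_B$, and $A \subseteq A'$, one has $\mu(A) \le \mu_B(A')$. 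The cases $A' \in \{\emptyset, X\}$ are trivial. If $A' = B$ then by the definition of $r_1$, $\mu(A) \le r_1 \le d = \mu_B(B)$. If $A' = B^c$, then $B \subseteq A^c$ with $A^c \in \mathcal{A}$, so by the definition of $r_2$ one has $\mu(A^c) \ge r_2 \ge d$, and hence $\mu(A) \le 1 - d = \mu_B(B^c)$.

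With the compatibility hypothesis verified, Corollary~\ref{Cor:exist} delivers a finitely additive probability measure $\mu_d$ on $\mathcal{B}$ extending both $\mu$ and $\mu_B$; in particular $\mu_d|_{\mathcal{A}} = \mu$ and $\mu_d(B) = d$, which is exactly what is required. The only real step of substance is the complementation trick needed in the $A' = B^c$ case, which is precisely the same move used to derive Corollary~\ref{Cor:exist} from Fact~\ref{fact:comm-ext}; beyond that the argument is a bookkeeping invocation of the common extension machinery already packaged in Corollary~\ref{Cor:exist}, so I do not anticipate any substantive obstacle.
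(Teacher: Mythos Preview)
Your argument is correct. Note, however, that the paper does not supply its own proof of this statement: it is recorded as a \textbf{Fact} with a citation to \cite[Theorem 3.3.3]{rao1983theory}, so there is no in-paper proof to compare against. Your derivation via Corollary~\ref{Cor:exist} (defining the auxiliary two-atom measure on $\{\emptyset,B,B^c,X\}$ and verifying the one-sided compatibility condition) is a clean and self-contained way to obtain the result using only the machinery the paper already provides; it is in the same spirit as the standard inner/outer-measure extension argument one finds in the cited reference, just packaged through the common-extension corollary rather than carried out directly.
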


The following fact is used throughout the paper without comment. 

\begin{fact} Let $A \subseteq \mathcal{U}$. The Boolean algebra $\mathcal{L}_{x}(A)$ injectively embeds into $\mathcal{L}_{xy}(A)$ via \begin{equation*}
    \alpha(\varphi(x)) := \varphi(x) \wedge y = y.
\end{equation*}
\end{fact}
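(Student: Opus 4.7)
The plan is to separately verify that $\alpha$ is a well-defined Boolean algebra homomorphism and that it is injective. Both checks rest on the single observation that since $y=y$ is a tautology in $T$, the formula $\varphi(x) \wedge y = y$ is logically equivalent (over $T$) to $\varphi(x)$ regarded as a formula with free variables among $x,y$. Thus conjoining $y=y$ is a syntactic cylinderification that adjoins a dummy variable without changing the semantic content of the formula.

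First, I would check that $\alpha$ is well-defined on the equivalence classes modulo $T$: if $T \vdash \varphi \leftrightarrow \varphi'$, then $T \vdash (\varphi \wedge y=y) \leftrightarrow (\varphi' \wedge y=y)$, so $\alpha$ descends to the quotient. Next, I would verify that $\alpha$ respects the Boolean operations by direct computation. For meet, $\alpha(\varphi \wedge \psi) = (\varphi \wedge \psi) \wedge (y=y) \equiv_T (\varphi \wedge (y=y)) \wedge (\psi \wedge (y=y)) = \alpha(\varphi) \wedge \alpha(\psi)$, and an analogous calculation handles join. For complement, $\alpha(\neg \varphi) = \neg \varphi \wedge (y=y) \equiv_T \neg \varphi \equiv_T \neg(\varphi \wedge (y=y)) = \neg \alpha(\varphi)$. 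The top and bottom elements are preserved since $(x=x) \wedge (y=y)$ is a tautology in $\mathcal{L}_{xy}(A)$ and $\bot \wedge (y=y) \equiv_T \bot$.

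For injectivity, suppose $\alpha(\varphi) = \alpha(\psi)$ as elements of $\mathcal{L}_{xy}(A)$, i.e., $T \vdash (\varphi(x) \wedge y=y) \leftrightarrow (\psi(x) \wedge y=y)$. Because $y=y$ holds in every structure, the conjunct is redundant and we obtain $T \vdash \varphi(x) \leftrightarrow \psi(x)$, so $\varphi$ and $\psi$ represent the same class in $\mathcal{L}_x(A)$.

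There is no genuine obstacle to overcome; the fact simply formalizes the observation that every $A$-definable subset $X \subseteq \mathcal{U}^x$ corresponds to the $A$-definable cylinder $X \times \mathcal{U}^y \subseteq \mathcal{U}^{xy}$, and $\alpha$ is the syntactic implementation of this cylinder construction. The statement is included only to make later identifications (e.g., viewing a measure on $\mathcal{L}_x(A)$ as a measure on the subalgebra $\alpha(\mathcal{L}_x(A)) \subseteq \mathcal{L}_{xy}(A)$) bookkeeping-free.
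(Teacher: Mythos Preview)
Your proposal is correct. The paper does not supply a proof of this fact at all; it is stated as a background observation and used without comment, so your verification is strictly more detailed than what the paper offers.
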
 

\subsection{Keisler measures}

Lastly we recall some basic definitions and standard observations involving Keisler measures. 

\begin{definition} Let $A \subseteq \mathcal{U}$ and $x$ a tuple of variables. Then a \textit{Keisler measure} (over $A$, in variable(s) $x$) is a finitely additive probability measure  on $\mathcal{L}_{x}(A)$. More explicitly, a Keisler measure $\mu$ is a map from $\mathcal{L}_{x}(A)$ to $[0,1]$ such that
\begin{enumerate}
    \item $\mu( x = x) = 1$.
    \item For any $\varphi(x) \in \mathcal{L}_{x}(A)$, $\mu(\neg \varphi(x)) = 1 - \mu(\varphi(x))$. 
    \item For any $\varphi(x), \psi(x) \in \mathcal{L}_{x}(A)$, 
    \begin{equation*}
        \mu(\varphi(x) \vee \psi(x)) = \mu(\varphi(x)) + \mu(\psi(x)) - \mu(\varphi(x) \wedge \psi(x)). 
    \end{equation*}
\end{enumerate}We denote the space of Keisler measures on $\mathcal{L}_{x}(A)$ as $\mathfrak{M}_{x}(A)$. 
\end{definition}

\begin{definition}\label{support} Let $\mu \in \mathfrak{M}_{x}(A)$ and $p \in S_{x}(A)$. We say that $p$ \textit{ is in the support of $\mu$} if for every $\varphi(x) \in p$, $\mu(\varphi(x)) > 0$. We let $\supp(\mu)$ be the collection of types in $S_{x}(A)$ which are in the support of $\mu$. We remark that for any $\mu \in \mathfrak{M}_{x}(A)$, $\supp(\mu) \neq \emptyset$ and $\supp(\mu)$ is a closed subset of $S_{x}(A)$. 
\end{definition}

We now discuss the topological/geometric properties of the space of Keisler measures. The following observations are folklore and follow from basic results in classical analysis. We refer the reader to \cite[Chapter 7]{simon2015guide} for more details. We first recall some definitions from functional analysis.

\begin{definition} Let $(V,||\cdot||)$ be a real Banach space and $V^{*}$ be the dual of $V$. More explicitly, $V^{*}$ is the collection of (norm)-continuous linear maps from $V$ to $\mathbb{R}$. Then the weak$^*$ topology on $V^{*}$ is the coarsest topology on $V^{*}$ such that for any $x \in V$, the map $f \mapsto f(x)$ is continuous.  
\end{definition} 

\begin{fact}\label{Basic:Keisler} Let $A \subseteq \mathcal{U}$. 
\begin{enumerate}
    \item $\mathfrak{M}_{x}(A)$ forms a compact Hausdorff space with the following topology: Let $\varphi_1(x),...,\varphi_n(x)$ be formulas in $\mathcal{L}_{x}(A)$ and $r_1,...,r_n,s_1,...,s_n$ be real numbers. Then a basic open set $O$ of $\mathfrak{M}_{x}(A)$ is of the form
    \begin{equation*}
        O = \bigcap_{i=1}^{n} \{\nu \in \mathfrak{M}_{x}(A): r_i < \nu(\varphi_i(x)) < s_i\}. 
    \end{equation*}
    \item For any $\varphi(x) \in \mathcal{L}_{x}(A)$, the map $E_{\varphi}:\mathfrak{M}_{x}(A) \to [0,1]$ via $E_{\varphi}(\mu) = \mu(\varphi(x))$ is continuous.
    \item Every Keisler measure $\mu$ in $\mathfrak{M}_{x}(A)$ is in unique correspondence with a regular Borel probability measure on $S_{x}(A)$. We denote the space of regular probability measures on $S_{x}(A)$ as $\mathcal{M}_{x}(A)$. 
    \item If we let $V = (C(S_{x}(A)),||\cdot||_{\infty})$ be the Banach space of continuous functions from $S_{x}(A)$ to $\mathbb{R}$ with the supremum norm, then $\mathcal{M}_{x}(A)$ is naturally a subset of $V^{*}$ with the weak$^*$ topology. By (3), the space $\mathfrak{M}_{x}(A)$ can be identified with a subset of $V^{*}$. The induced topology on $\mathfrak{M}_{x}(A)$ from $V^{*}$ is the same as the topology described in (1). Moreover,  $\mathfrak{M}_{x}(A)$ is a convex subset of $V^{*}$.
    \item There is a topological embedding $\delta: S_{x}(A) \to \mathfrak{M}_{x}(A)$ via $p \to \delta_{p}$ where $\delta_{p}$ is a the Dirac measure concentrating on $p$. In other words, for any $\varphi(x) \in \mathcal{L}_{x}(A)$,
    \begin{equation*}
        \delta(p)(\varphi(x)) = \delta_{p}(\varphi(x))=\begin{cases}
\begin{array}{cc}
1 & \text{if }\varphi(x)\in p,\\
0 & \text{otherwise.}
\end{array}\end{cases}
    \end{equation*}
\end{enumerate}
\end{fact}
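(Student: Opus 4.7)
The plan is to treat this as a bundle of standard folklore facts, each following from classical functional-analytic machinery once one passes through Stone duality, so I would organize the sketch around the identification of $\mathfrak{M}_x(A)$ with a subspace of $[0,1]^{\mathcal{L}_x(A)}$ on one side and with the regular Borel probability measures on the Stone space $S_x(A)$ on the other.

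For parts (1) and (2), I would begin by embedding $\mathfrak{M}_x(A) \hookrightarrow [0,1]^{\mathcal{L}_x(A)}$, giving the codomain the product topology. The basic opens described in (1) are exactly the restrictions of the subbasic opens of the product. The image is closed because each defining axiom (normalization $\mu(x=x)=1$, complementation $\mu(\neg\varphi) = 1-\mu(\varphi)$, and inclusion-exclusion) is a closed condition determined by finitely many coordinates, so compactness of $\mathfrak{M}_x(A)$ follows from Tychonoff, and Hausdorffness is inherited. Continuity of $E_\varphi$ is then immediate, since $\{E_\varphi^{-1}(r,s)\}$ is a subbase for the topology just described.

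For (3) and (4), I would invoke Stone duality: $S_x(A)$ is the Stone space of $\mathcal{L}_x(A)$, whose clopen sets are exactly the $[\varphi(x)]$. A Keisler measure is precisely a finitely additive probability measure on this clopen algebra, and Carathéodory extension in the Stone-space setting (or equivalently, the Riesz representation theorem applied to the positive functional $\varphi \mapsto \mu(\varphi)$ extended by linearity to locally constant functions) shows that each Keisler measure extends uniquely to a regular Borel probability measure on $S_x(A)$, giving the bijection of (3). For (4), convexity is clear from pointwise convex combinations, and to identify the topologies one observes that the weak$^*$ topology on $\mathcal{M}_x(A)$ is generated by the functionals $\mu \mapsto \int f\, d\mu$ for $f \in C(S_x(A))$. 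Since $S_x(A)$ is totally disconnected and compact Hausdorff, Stone–Weierstrass gives uniform density of the linear span of the $\mathbf{1}_{[\varphi(x)]}$ in $C(S_x(A))$, so the weak$^*$ topology is already generated by the functionals $\mu \mapsto \mu(\varphi(x))$, matching the topology in (1).

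For (5), the computation that $\delta_p$ is a Keisler measure is routine, injectivity follows because distinct types are separated by some formula, and continuity of $\delta$ follows since $E_\varphi \circ \delta$ is the characteristic function of $[\varphi(x)]$, which is clopen in $S_x(A)$. Since $S_x(A)$ is compact and $\mathfrak{M}_x(A)$ is Hausdorff, a continuous injection is automatically a topological embedding. The main subtlety in the whole package is the topology-matching in (4), where one must be careful that regularity (not merely finite additivity) is what makes the bijection in (3) canonical; everything else is a direct application of Tychonoff, Stone duality, and Stone–Weierstrass.
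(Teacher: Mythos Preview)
Your sketch is correct and standard. Note that the paper does not actually prove this statement: it is recorded as a \emph{Fact}, described as folklore, and the reader is referred to \cite{gannon2020approximation} and \cite{simon2015guide} for details. Your outline---embedding into $[0,1]^{\mathcal{L}_x(A)}$ and invoking Tychonoff for (1)--(2), Stone duality plus Riesz/Carath\'eodory for (3), Stone--Weierstrass to match the weak$^*$ topology with the coordinate topology for (4), and the compact-to-Hausdorff embedding argument for (5)---is exactly the standard route one finds in those references, so there is nothing to compare.
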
 

\begin{remark} As written, Fact \ref{Basic:Keisler} is first explained in the model theory context in Section 4 of \cite{hrushovski2011nip}. However, this theorem can be found in the Stone space literature going back much earlier \cite[Theorem 2.2.2]{chan1977measure}. 
\end{remark}

\begin{definition} Let $V_1$ and $V_2$ be locally convex topological vector spaces. Let $E_1 \subseteq V_1$ and $E_2 \subseteq V_2$. We say that $\alpha:E_1 \to E_2$ is an affine homeomorphism if
    \begin{enumerate}
        \item $\alpha$ is a homeomorphism from $E_1$ to $E_2$ with the induced topologies. 
        \item $\alpha$ is affine, i.e. for $a,b \in E_1$ and $r \in \mathbb{R}$,  $\alpha(ra + b) = r\alpha(a) + \alpha(b)$. 
    \end{enumerate}
\end{definition}

\begin{fact} Let $B \subseteq A \subseteq \mathcal{U}$. \begin{enumerate}
    \item The natural restriction map $r_{A,B}: \mathfrak{M}_{x}(A) \to \mathfrak{M}_{x}(B)$ is continuous. If $\mu \in \mathfrak{M}_{x}(A)$, we usually write $r_{A,B}(\mu)$ simply as $\mu|_{B}$. 
    \item The natural projection map $\pi_{x}: \mathfrak{M}_{xy}(A) \to \mathfrak{M}_{x}(A)$, defined via $\pi_x(\mu)(\varphi(x))=\mu(\varphi(x)\wedge y=y)$ is continuous. 
    \item Restriction maps and projection maps commute: If $\omega \in \mathfrak{M}_{xy}(A)$, then $\pi_{x}(\omega)|_{B} = \pi_{x}(\omega|_{B})$. 
\end{enumerate}
\end{fact}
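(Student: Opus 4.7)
The plan is to verify each of the three clauses directly from the topological description of $\mathfrak{M}_{x}(\cdot)$ given in Fact~\ref{Basic:Keisler}, since all three claims are essentially formal once the definitions are unpacked. Throughout I would use the standard fact that a map $f\colon Y\to \mathfrak{M}_{x}(C)$ into a space of Keisler measures is continuous if and only if $E_{\varphi}\circ f$ is continuous for every $\varphi(x)\in \mathcal{L}_{x}(C)$, which follows immediately from the description of basic open sets in Fact~\ref{Basic:Keisler}(1).

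For (1), I would first note that $\mathcal{L}_{x}(B)\subseteq \mathcal{L}_{x}(A)$, so any $\varphi(x)\in \mathcal{L}_{x}(B)$ can be regarded as an element of $\mathcal{L}_{x}(A)$. A basic open neighborhood in $\mathfrak{M}_{x}(B)$ is determined by finitely many formulas $\varphi_{1},\dots,\varphi_{n}\in \mathcal{L}_{x}(B)$ and intervals $(r_{i},s_{i})$; its preimage under $r_{A,B}$ is literally the basic open set in $\mathfrak{M}_{x}(A)$ determined by the same data. Equivalently, $E_{\varphi}\circ r_{A,B}=E_{\varphi}$ as a function on $\mathfrak{M}_{x}(A)$, which is continuous by Fact~\ref{Basic:Keisler}(2).

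For (2), I would first quickly check that $\pi_{x}(\mu)$ is in fact a Keisler measure: it assigns value $1$ to $x=x$ (since $(x=x)\wedge (y=y)$ is $\top$ in $\mathcal{L}_{xy}(A)$), respects complements, and satisfies the inclusion–exclusion identity, each time because $\mu$ does on $\mathcal{L}_{xy}(A)$. Continuity then reduces to the identity $E_{\varphi}\circ \pi_{x}=E_{\varphi(x)\wedge y=y}$ on $\mathfrak{M}_{xy}(A)$, and the right-hand side is continuous by Fact~\ref{Basic:Keisler}(2) applied in the $xy$-variables.

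For (3), I would simply unfold the definitions: for any $\varphi(x)\in \mathcal{L}_{x}(B)$,
\begin{equation*}
\pi_{x}(\omega)|_{B}(\varphi(x)) \;=\; \pi_{x}(\omega)(\varphi(x)) \;=\; \omega(\varphi(x)\wedge y=y) \;=\; \omega|_{B}(\varphi(x)\wedge y=y) \;=\; \pi_{x}(\omega|_{B})(\varphi(x)),
\end{equation*}
where the third equality uses that $\varphi(x)\wedge y=y\in \mathcal{L}_{xy}(B)$. Since a Keisler measure is determined by its values on formulas, the two measures agree. There is no genuine obstacle in any of the three items; the only mild subtlety is remembering that $\varphi(x)\wedge y=y$ has parameters from $B$ whenever $\varphi(x)$ does, which is what makes the restriction in the third equality legal. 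I would record the statement explicitly because these identifications are used silently and repeatedly in the arguments on extension domination that follow.
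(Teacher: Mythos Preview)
Your argument is correct. The paper does not actually provide a proof of this statement: it is recorded as a \emph{Fact} and left unproved, to be used freely in what follows. Your verification unpacks exactly the intended content and matches how the paper uses Fact~\ref{Basic:Keisler}(1)--(2), so there is nothing to compare against beyond noting that you have supplied the routine details the authors chose to omit.
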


The following facts are basic and left as an exercise. These facts are often used without mention throughout the paper. 

\begin{fact}\label{Basic:Fact} Let $\omega \in \mathfrak{M}_{xy}(A)$, $\varphi(x) \in \mathcal{L}_{x}(A)$, and $\theta(x,y),\rho(x,y) \in \mathcal{L}_{xy}(A)$. Then 
\begin{enumerate} 
\item If $\pi_{x}(\omega)(\varphi(x)) = 1$, then $\omega(\varphi(x) \wedge \theta(x,y)) = \omega(\theta(x,y))$. 
\item $\omega(\theta(x,y)) \leq \pi_{x}(\omega)(\exists y(\theta(x,y))$.
\item If $\omega(\theta(x,y)) = 1$, then $\omega(\rho(x,y) \wedge \theta(x,y)) = \omega(\rho(x,y))$. 
\end{enumerate} 
\end{fact}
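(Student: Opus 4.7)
All three parts are routine consequences of finite additivity and monotonicity of $\omega$, together with the identification $\omega(\varphi(x)) = \omega(\varphi(x) \wedge y=y) = \pi_x(\omega)(\varphi(x))$ from the embedding $\alpha$ recalled earlier. The plan is to use in each case a suitable disjoint decomposition and the observation that any formula whose $\omega$-measure is $0$ can be absorbed.

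\textbf{Part (1).} The hypothesis $\pi_x(\omega)(\varphi(x)) = 1$ translates, via the embedding $\alpha$, to $\omega(\varphi(x)) = 1$, hence $\omega(\neg \varphi(x)) = 0$. Write
\[
\theta(x,y) \equiv \bigl(\varphi(x) \wedge \theta(x,y)\bigr) \vee \bigl(\neg \varphi(x) \wedge \theta(x,y)\bigr),
\]
where the two disjuncts are logically disjoint. Finite additivity gives
\[
\omega(\theta(x,y)) = \omega(\varphi(x) \wedge \theta(x,y)) + \omega(\neg \varphi(x) \wedge \theta(x,y)),
\]
and monotonicity bounds the second term by $\omega(\neg \varphi(x)) = 0$.

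\textbf{Part (2).} Since $T \vdash \theta(x,y) \to \exists y\, \theta(x,y)$, the formula $\theta(x,y)$ defines a subset of the definable set given by $\exists y\, \theta(x,y)$ (viewed as a formula in $x$ with a dummy $y$). Monotonicity of $\omega$ then yields
\[
\omega(\theta(x,y)) \leq \omega\bigl(\exists y\, \theta(x,y) \wedge y=y\bigr) = \pi_x(\omega)(\exists y\, \theta(x,y)).
\]

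\textbf{Part (3).} Identical to (1): decompose $\rho(x,y)$ as the disjoint union $(\rho \wedge \theta) \vee (\rho \wedge \neg \theta)$, use finite additivity, and observe $\omega(\rho \wedge \neg \theta) \leq \omega(\neg \theta) = 1 - \omega(\theta) = 0$.

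There is no real obstacle: each claim reduces to the standard fact that null sets do not contribute to the measure of a union, and to the tautology $\theta \to \exists y\, \theta$. The only thing requiring care is being consistent with the identification of $\mathcal{L}_x(A)$ as a subalgebra of $\mathcal{L}_{xy}(A)$ via $\varphi(x) \mapsto \varphi(x) \wedge y=y$, so that expressions like $\omega(\varphi(x))$ and $\pi_x(\omega)(\varphi(x))$ can be used interchangeably.
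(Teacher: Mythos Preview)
Your proof is correct. The paper itself does not give a proof of this fact (it states that these facts are basic and left as an exercise), and your argument is exactly the standard one that is intended: finite additivity plus monotonicity, together with the identification of $\mathcal{L}_x(A)$ inside $\mathcal{L}_{xy}(A)$.
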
 

Finally, we recall the definition of a separated amalgam \cite[Definition 4.2]{NIP-III}. 
\begin{definition}\label{def:amal}
Let $\lambda \in \mathfrak{M}_{xy}(A)$. We say that $\lambda$ is a \emph{separated amalgam} if for any pair of formulas $\varphi(x) \in \mathcal{L}_{x}(A)$ and $\psi(y) \in \mathcal{L}_{y}(A)$, we have that 
\begin{equation*}
    \lambda(\varphi(x) \wedge \psi(y)) = \pi_{x}(\lambda)(\varphi(x)) \cdot \pi_{y}(\lambda)(\psi(y)). 
\end{equation*} We let $\mathfrak{M}^{\mathrm{Am}}_{xy}(A) := \{\lambda \in \mathfrak{M}_{xy}(A): \lambda$ is a separated amalgam$\}$. 
\end{definition}

\begin{lemma}\label{Amal:eq} The following are equivalent: 
\begin{enumerate}
    \item $\lambda \in \mathfrak{M}^{\mathrm{Am}}_{xy}(A)$. 
    \item There exists measures $\mu \in \mathfrak{M}_{x}(A)$ and $\nu \in \mathfrak{M}_{y}(A)$ such that for any $(\varphi(x),\psi(y)) \in \mathcal{L}_{x}(A) \times \mathcal{L}_{y}(A)$, 
    $\mu(\varphi(x)) \cdot \nu(\psi(y)) = \lambda(\varphi(x) \wedge \psi(y))$. 
\end{enumerate}
\end{lemma}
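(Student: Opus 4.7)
The plan is to prove the equivalence by taking $\mu$ and $\nu$ in condition (2) to be exactly the marginals $\pi_x(\lambda)$ and $\pi_y(\lambda)$, and then showing each direction is essentially a substitution argument.

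For the direction $(1) \Rightarrow (2)$, I would simply set $\mu := \pi_x(\lambda)$ and $\nu := \pi_y(\lambda)$. These are Keisler measures on $\mathcal{L}_x(A)$ and $\mathcal{L}_y(A)$ respectively (since the projection of a Keisler measure is a Keisler measure), and the defining equation of condition (2) is then exactly the defining equation of a separated amalgam from Definition~\ref{def:amal}.

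For the direction $(2) \Rightarrow (1)$, the key observation is that $\mu$ and $\nu$ are forced to be the marginals of $\lambda$. To see this, specialize the given equation with $\psi(y) = (y = y)$: we obtain
\begin{equation*}
\lambda(\varphi(x) \wedge y = y) = \mu(\varphi(x)) \cdot \nu(y = y) = \mu(\varphi(x)),
\end{equation*}
which by the definition of $\pi_x$ gives $\pi_x(\lambda)(\varphi(x)) = \mu(\varphi(x))$ for all $\varphi(x) \in \mathcal{L}_x(A)$. Symmetrically, $\pi_y(\lambda) = \nu$. Substituting these identifications back into the equation from (2) yields exactly the separated amalgam condition, so $\lambda \in \mathfrak{M}^{\mathrm{Am}}_{xy}(A)$.

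There is no real obstacle here; the lemma is essentially a definitional unpacking, with the only mild content being the recognition that the product structure in (2) uniquely determines $\mu$ and $\nu$ as marginals (which follows by plugging in the tautology $y = y$, together with the normalization $\nu(y=y) = 1$ coming from $\nu$ being a Keisler measure). I would keep the write-up to a few lines.
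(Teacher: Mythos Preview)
Your proposal is correct and follows essentially the same approach as the paper: the paper also treats $(1)\Rightarrow(2)$ as trivial (taking $\mu=\pi_x(\lambda)$, $\nu=\pi_y(\lambda)$), and for $(2)\Rightarrow(1)$ plugs in $\psi(y)=(y=y)$ to identify $\mu$ with $\pi_x(\lambda)$ (and symmetrically for $\nu$), then observes that the separated amalgam condition follows.
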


\begin{proof} $(1) \implies (2)$ is trivial. Suppose $(2)$ holds. Then 
\begin{equation*}
    \mu(\varphi(x)) = \mu(\varphi(x)) \cdot \nu(y=y) = \lambda(\varphi(x) \wedge y = y) = \pi_{x}(\lambda)(\varphi(x)). 
\end{equation*}
Similarly, $\nu(\psi(y)) = \pi_{y}(\lambda)(\psi(y))$. Hence $\lambda$ is a separated amalgam. 
\end{proof}

\begin{remark}\label{samalgam:exist} Let $A \subseteq \mathcal{U}$. For any pair of measures $\mu \in \mathfrak{M}_{x}(A)$ and $\nu \in \mathfrak{M}_{y}(A)$, there exists a measure $\omega \in \mathfrak{M}^{\am}_{xy}(A)$ such that $\pi_{x}(\omega) = \mu$ and $\pi_{y}(\omega) = \nu$. Indeed, one can build a finitely additive product measure $\mu \times \nu$ on $\mathcal{L}_{x}(A) \times \mathcal{L}_{y}(A)$. The map $\alpha: \mathcal{L}_{x}(A) \times \mathcal{L}_{y}(A) \to \mathcal{L}_{xy}(A)$ given by $(\varphi(x),\psi(y)) \to \varphi(x) \wedge \psi(y)$ is an injective homomorphism of Boolean algebras. Hence we can consider the measure $\alpha(\mu \times \nu)$ as a finitely additive probability measure on the 
subalgebra of $\mathcal{L}_{xy}(A)$ generated by $\{\varphi(x) \wedge \psi(y): \varphi(x) \in \mathcal{L}_{x}(A), \psi(y) \in \mathcal{L}_{y}(A)\}$. Any $\omega$ extending $\alpha(\mu \times \nu)$ to the Boolean algebra $\mathcal{L}_{xy}(A)$ is a separated amalgam. Notice that $\omega$ exists by an application of Corollary \ref{Cor:exist} with $\mathcal{A} = \mathcal{B} = \alpha(\mathcal{L}_{x}(A) \times \mathcal{L}_{y}(A))$, $\mu_1 = \mu_2 = \alpha(\mu \times \nu)$, and $\mathcal{C} = \mathcal{L}_{xy}(A)$. 
\end{remark} 

The following is stated in passing after Definition 2.4 in \cite{NIP-III}. We provide a proof. 

\begin{lemma}\label{type:amal} Let $A \subseteq \mathcal{U}$ and $\lambda \in \mathfrak{M}_{xy}(A)$.  Suppose either $\pi_{x}(\lambda) = \delta_{p}$ for some $p \in S_{x}(A)$ or $\pi_{y}(\lambda) = \delta_{q}$ for some $q \in S_{y}(A)$. Then $\lambda \in \mathfrak{M}_{xy}^{\am}(A)$.  
\end{lemma}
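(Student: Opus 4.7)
The plan is to verify the separated-amalgam identity directly by splitting on whether the formula $\varphi(x)$ is in the type $p$ (assuming without loss of generality that $\pi_{x}(\lambda)=\delta_{p}$; the other case is symmetric). Fix an arbitrary pair $(\varphi(x),\psi(y))\in\mathcal{L}_{x}(A)\times\mathcal{L}_{y}(A)$.

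First I would handle the easy case: $\varphi(x)\notin p$. Then $\pi_{x}(\lambda)(\varphi(x))=\delta_{p}(\varphi(x))=0$, and by monotonicity (which is immediate from finite additivity) we have $\lambda(\varphi(x)\wedge\psi(y))\le\lambda(\varphi(x))=\pi_{x}(\lambda)(\varphi(x))=0$. Hence both sides of the desired equality vanish.

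Next, if $\varphi(x)\in p$, then $\pi_{x}(\lambda)(\varphi(x))=1$. Applying Fact~\ref{Basic:Fact}(1) with $\omega=\lambda$ and $\theta(x,y)=\psi(y)$ gives $\lambda(\varphi(x)\wedge\psi(y))=\lambda(\psi(y))=\pi_{y}(\lambda)(\psi(y))$, where the last equality is just the definition of the projection $\pi_{y}$ (recall $\psi(y)$ is identified with $\psi(y)\wedge x=x$ inside $\mathcal{L}_{xy}(A)$). Since $\pi_{x}(\lambda)(\varphi(x))=1$, this equals $\pi_{x}(\lambda)(\varphi(x))\cdot\pi_{y}(\lambda)(\psi(y))$, as required.

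The symmetric case, when $\pi_{y}(\lambda)=\delta_{q}$, is proved in exactly the same manner, swapping the roles of $x$ and $y$. I do not expect any real obstacle: the argument is a two-case dichotomy driven entirely by the fact that a Dirac marginal forces every formula in the conditioning variable to have measure $0$ or $1$, and Fact~\ref{Basic:Fact}(1) takes care of the measure-$1$ case.
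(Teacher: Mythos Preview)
Your proof is correct and essentially identical to the paper's own argument: both assume $\pi_{x}(\lambda)=\delta_{p}$ by symmetry, split on whether $\varphi(x)\in p$, use monotonicity to handle the measure-$0$ case, and invoke Fact~\ref{Basic:Fact}(1) for the measure-$1$ case. The only cosmetic difference is the order in which the two cases are presented.
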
 

\begin{proof} By symmetry, we assume that $\pi_{x}(\lambda) = \delta_{p}$. Fix $(\varphi(x),\psi(y)) \in \mathcal{L}_{x}(A) \times \mathcal{L}_{y}(A)$. First suppose that $\varphi(x) \in p$. Then
\begin{equation*} 
\pi_{x}(\lambda)(\varphi(x)) \cdot \pi_{y}(\lambda)(\psi(y)) = \delta_{p}(\varphi(x)) \cdot \pi_{y}(\lambda)(\psi(y)) = \pi_{y}(\lambda)(\psi(y)) 
\end{equation*} 
\begin{equation*}
    = \lambda(\psi(y) \wedge x= x) \overset{(*)}{=} \lambda(\varphi(x) \wedge \psi(y)). 
\end{equation*}
Where equality $(*)$ holds from (1) of Lemma \ref{Basic:Fact} and $\pi_{x}(\lambda)(\varphi(x)) = \delta_{p}(\varphi(x)) = 1$. 

Now suppose that $\neg \varphi(x) \in p$. Clearly $ \delta_{p}(\varphi(x)) \cdot \pi_{y}(\lambda)(\psi(y)) = 0$. Also note,  
\begin{equation*} 
 0 = \delta_{p}(\varphi(x)) = \pi_{x}(\lambda)(\varphi(x)) = \lambda(\varphi(x) \wedge y = y) \geq \lambda(\varphi(x) \wedge \psi(y))
\end{equation*} 
\begin{equation*}
    \implies \lambda(\varphi(x) \wedge \psi(y)) = 0. 
\end{equation*}
Hence $\lambda \in \mathfrak{M}_{xy}^{\am}(A)$. 
\end{proof} 

\section{Extension Domination}\label{sec:extension}
In this section, we introduce the notion of \textit{extension domination} which we denote as `$\geq_{\mathbb{E}}$'. We then prove that extension domination \textit{behaves} like a notion of domination. More explicitly, we show that
\begin{enumerate}
    \item Extension domination extends the definition of domination for global types. 
    \item Any Keisler measure dominates its (definable) push-forwards. 
    \item Extension domination forms a preorder on global Keisler measures. 
\end{enumerate} 
After proving these properties, we prove a general approximation theorem. This theorem essentially states that if a measure $\mu$ extension dominates a measure $\nu$, then there is a collection of formulas which forces this to be true via \textit{squeezing}. We also derive a uniform version of this theorem via compactness. We end this section by showing that under a moderately stronger assumption, extension domination interacts nicely with  \textit{localizations}, a phenomenon unique to measures. This occurs when domination is witnessed by a separated amalgam. 

\subsection{Basic definitions and observations} Throughout this section, $\mathcal{U}$ is a fixed monster model and $A$ is a small subset of $\mathcal{U}$.  In order to define extension domination, we must first define the notion of an \textit{extension space}. 

\begin{definition} Let $\mu \in \mathfrak{M}_{x}(\mathcal{U})$ and $\lambda \in \mathfrak{M}_{xy}(A)$ such that $\mu|_{A} = \pi_{x}(\lambda)$. We define the corresponding \textit{extension space}, denoted $\E(\lambda,\mu)$, as follows:
\begin{equation*}\E(\lambda,\mu) := \{\omega \in \mathfrak{M}_{xy}(\mathcal{U}): \omega|_{A} = \lambda, \pi_{x}(\omega) = \mu\}.
\end{equation*} 
\end{definition} 

Notice that in the definition of an extension space, we require a consistency condition, i.e. $\mu|_{A} = \pi_{x}(\lambda)$. We will now argue that this condition is sufficient to ensure that $\E(\lambda,\mu)$ is non-empty. We also observe that $\E(\lambda,\mu)$ is a convex compact space. 

\begin{lemma}\label{ext:exist} Let $\mu \in \mathfrak{M}_{x}(\mathcal{U})$ and $\lambda \in \mathfrak{M}_{xy}(A)$ such that $\mu|_{A} = \pi_{x}(\lambda)$. Then
\begin{enumerate} 
\item $\E(\lambda,\mu) \neq \emptyset$. 
\item $\E(\lambda,\mu)$ is a compact Hausdorff space under the topology induced by $\mathfrak{M}_{xy}(\mathcal{U})$. 
\item $\E(\lambda, \mu)$ is convex. 
\end{enumerate} 
\end{lemma}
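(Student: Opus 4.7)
My plan is to prove the three items in order, with (1) being the substantive point and (2)--(3) being straightforward consequences of continuity and linearity.

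For (1), the idea is to apply Corollary~\ref{Cor:exist} to the Boolean algebras $\mathcal{A} := \{\varphi(x) \wedge y = y : \varphi(x) \in \mathcal{L}_{x}(\mathcal{U})\}$ (carrying $\mu$) and $\mathcal{B} := \mathcal{L}_{xy}(A)$ (carrying $\lambda$), both viewed as subalgebras of $\mathcal{C} := \mathcal{L}_{xy}(\mathcal{U})$. I need to verify the one-sided inequality: whenever $\varphi(x) \wedge y = y \subseteq \theta(x,y)$ with $\varphi(x) \in \mathcal{L}_x(\mathcal{U})$ and $\theta(x,y) \in \mathcal{L}_{xy}(A)$, we have $\mu(\varphi(x)) \leq \lambda(\theta(x,y))$. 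The key observation is that this set-theoretic inclusion is equivalent to $\varphi(x) \subseteq \forall y\, \theta(x,y)$ in $\mathcal{U}^{x}$, and since $\theta$ has parameters from $A$, the formula $\forall y\, \theta(x,y)$ lies in $\mathcal{L}_{x}(A)$. Monotonicity of $\mu$ then gives $\mu(\varphi(x)) \leq \mu(\forall y\, \theta(x,y))$, and the hypothesis $\mu|_A = \pi_x(\lambda)$ together with monotonicity of $\lambda$ yields
\begin{equation*}
\mu(\forall y\, \theta(x,y)) = \pi_x(\lambda)(\forall y\, \theta(x,y)) = \lambda\bigl((\forall y\, \theta(x,y)) \wedge y = y\bigr) \leq \lambda(\theta(x,y)).
\end{equation*}
Corollary~\ref{Cor:exist} then produces a common extension $\omega \in \mathfrak{M}_{xy}(\mathcal{U})$. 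By construction $\omega|_{A} = \lambda$ on $\mathcal{L}_{xy}(A)$, and evaluating on formulas of the form $\varphi(x) \wedge y = y$ shows that $\pi_x(\omega) = \mu$, so $\omega \in \E(\lambda,\mu)$.

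For (2), I will observe that $\E(\lambda,\mu) = r_{\mathcal{U},A}^{-1}(\{\lambda\}) \cap \pi_x^{-1}(\{\mu\})$, which is the intersection of two closed subsets of the compact Hausdorff space $\mathfrak{M}_{xy}(\mathcal{U})$; both preimages are closed because $r_{\mathcal{U},A}$ and $\pi_x$ are continuous and $\mathfrak{M}_{x}(A)$, $\mathfrak{M}_x(\mathcal{U})$ are Hausdorff. Hence $\E(\lambda,\mu)$ is closed in a compact Hausdorff space, therefore itself compact Hausdorff.

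For (3), given $\omega_1, \omega_2 \in \E(\lambda,\mu)$ and $t \in [0,1]$, the convex combination $t\omega_1 + (1-t)\omega_2$ is again a Keisler measure in $\mathfrak{M}_{xy}(\mathcal{U})$ since $\mathfrak{M}_{xy}(\mathcal{U})$ is convex inside $V^{\ast}$ by Fact~\ref{Basic:Keisler}(4). Both the restriction and projection maps are linear, so both defining conditions transfer: $(t\omega_1 + (1-t)\omega_2)|_A = t\lambda + (1-t)\lambda = \lambda$ and $\pi_x(t\omega_1 + (1-t)\omega_2) = t\mu + (1-t)\mu = \mu$.

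The main obstacle is the consistency check in (1); once one sees that the set-theoretic inclusion $\varphi(x) \wedge y=y \subseteq \theta(x,y)$ forces the seemingly stronger pointwise containment $\varphi(x) \subseteq \forall y\,\theta(x,y)$, the verification collapses into a chain of monotonicity and the hypothesis on projections. Parts (2) and (3) are purely formal.
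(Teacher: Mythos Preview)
Your proof is correct and follows essentially the same approach as the paper. The only cosmetic difference in part~(1) is that you verify the one-sided condition of Corollary~\ref{Cor:exist} in the direction $\varphi(x)\wedge y=y \subseteq \theta(x,y)$ using the intermediary $\forall y\,\theta(x,y)$, whereas the paper checks the dual inclusion $\theta(x,y)\subseteq \varphi(x)\wedge y=y$ via $\exists y\,\theta(x,y)$; parts~(2) and~(3) are identical.
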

\begin{proof} We prove the statements. 
\begin{enumerate}
    \item This follows from an application of Corollary \ref{Cor:exist}. Identify the Boolean algebras $\mathcal{L}_{x}(\mathcal{U})$ and $\mathcal{L}_{xy}(A)$ with their images under their natural embeddings into $\mathcal{L}_{xy}(\mathcal{U})$ as well as $\mu$ and $\lambda$ with the appropriate finitely additive measures on the corresponding images. Let $\theta(x,y) \in \mathcal{L}_{xy}(A)$, $\varphi(x) \in \mathcal{L}_{x}(\mathcal{U})$, and suppose that $\theta(x,y) \subseteq   \varphi(x) \wedge y =y$ as definable sets. Consider the formula $\gamma(x,y) := \exists y \theta(x,y) \wedge y = y$. Notice that $\theta(x,y) \subseteq \gamma(x,y) \subseteq \varphi(x) \wedge y =y$. Now
\begin{equation*}
    \lambda(\theta(x,y)) \leq \lambda(\gamma(x,y)) = \pi_{x}(\lambda)(\exists y\theta(x,y)) 
\end{equation*}
\begin{equation*}
    = \mu(\exists y \theta(x,y)) \leq \mu(\varphi(x)) = \mu(\varphi(x) \wedge y = y).
\end{equation*}
By Corollary \ref{Cor:exist}, there exists $\omega \in \mathfrak{M}_{xy}(\mathcal{U})$ which is a common extension of the $\lambda$ and $\mu$. By construction, $\omega \in \E(\lambda,\mu)$. 
\item Any subspace of a Hausdorff space is Hausdorff. Let $r_{A}:\mathfrak{M}_{xy}(\mathcal{U}) \to \mathfrak{M}_{xy}(A)$ be the obvious restriction map. Then $\E(\lambda,\mu) =\pi_{x}^{-1}(\{\mu\}) \cap (r_{A})^{-1}(\{\lambda\})$. Since both $r_{A}$ and $\pi_{x}$ are continuous, $\E(\lambda,\mu)$ is the intersection of two compact sets (and thus compact).
\item Suppose that $\omega_1, \omega_2 \in \E(\lambda,\mu)$ and let $r,s \in [0,1]$ such that $r + s =1$. 
Then 
\begin{equation*} 
\pi_{x}(r\omega_1 + s\omega_2) = r\pi_{x}(\omega_1) + s\pi_{x}(\omega_1) = r\mu + s\mu = \mu,
\end{equation*} 
and, 
\begin{equation*} 
(r\omega_1 + s\omega_1)|_{A} = r\omega_1|_{A} + s\omega_1|_{A} = r\lambda + s\lambda = \lambda.
\end{equation*} 
Hence $r\omega_1 + s\omega_2 \in \E(\lambda,\mu)$.  \qedhere
\end{enumerate} 
\end{proof}

We now define extension domination.

\begin{definition}\label{def:ext-dom} Let $\mu\in \mathfrak{M}_x(\mathcal{U})$, $\nu\in \mathfrak{M}_y(\mathcal{U})$, and $A$ be a small subset of  $\mathcal{U}$. We say that $\mu$ \textit{extension dominates $\nu$ (over $A$)} (denoted by $\mu \geq_{\mathbb{E},A} \nu$) if there exists $\lambda \in \mathfrak{M}_{xy}(A)$ such that 
\begin{enumerate}
\item $\pi_{x}(\lambda) = \mu|_{A}$. 
\item For any $\omega$ in $\E(\lambda, \mu)$, $\pi_{y}(\omega) = \nu$. 
\end{enumerate}
If the above conditions are satisfied, then $\lambda$ is said to \textit{witness} $\mu \geq_{\mathbb{E},A} \nu$ or \textit{witness domination}. We write $\mu \geq_{\mathbb{E}} \nu$ if there exists some small set $A$ such that $\mu \geq_{\mathbb{E},A} \nu$. 
\end{definition} 

\begin{convention} Let $\mathfrak{M}(\mathcal{U})$ be the space of global Keisler measures in finitely many variables. Suppose that $\mu$ and $\nu$ are in $\mathfrak{M}(\mathcal{U})$. We often consider $\geq_{\mathbb{E},A}$ and $\geq_{\mathbb{E}}$ as  binary relations on this space by saying that $\mu$ dominates $\nu$ (over $A$) if $\mu(x)$ dominates $\nu(y)$ (over $A$) where $x$ and $y$ are distinct tuples of variables.
\end{convention}

\begin{remark} We remark that extension domination is \textit{non-vacuous}. More explicitly, if $\mu \geq_{\mathbb{E},A} \nu$ and $\lambda$ witnesses domination, then $\E(\lambda,\mu)$ is non-empty. This is precisely the conclusion of $(1)$ of Lemma \ref{ext:exist}. This will become an issue when we discuss \textit{amalgam deciphering} in Section 5. 
\end{remark} 

We now show that extension domination extends the classical definition of domination for types.

\begin{proposition}\label{dom:exdom} Let $p\in S_x(\mathcal{U})$, $q\in S_y(\mathcal{U})$, and $A$ be a small subset of $\mathcal{U}$. If $p \geq_{D,A} q$, then $\delta_{p} \geq_{\mathbb{E},A} \delta_{q}$. 
\end{proposition}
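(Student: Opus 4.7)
The plan is to promote the type-level witness $r\in S_{xy}(A)$ of $p\geq_{D,A}q$ to a measure-level witness by taking its Dirac measure. Concretely, I will set $\lambda := \delta_{r}\in\mathfrak{M}_{xy}(A)$ and verify that $\lambda$ witnesses $\delta_{p}\geq_{\mathbb{E},A}\delta_{q}$ in the sense of Definition~\ref{def:ext-dom}.

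The first thing to check is the consistency condition $\pi_{x}(\lambda)=\delta_{p}|_{A}$. This is immediate from $p|_{A}\subseteq r$, since $\pi_{x}(\delta_{r})=\delta_{r|_{x}}=\delta_{p|_{A}}$, and $\delta_{p}|_{A}=\delta_{p|_{A}}$. In particular, by Lemma~\ref{ext:exist}(1) the extension space $\E(\lambda,\delta_{p})$ is nonempty.

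Next, I fix an arbitrary $\omega\in\E(\lambda,\delta_{p})$ and show $\pi_{y}(\omega)=\delta_{q}$. Since $\delta_{q}$ is the unique Keisler measure assigning $1$ to every formula of $q$, it suffices to show $\pi_{y}(\omega)(\psi(y))=1$ for each $\psi(y)\in q$. Given such a $\psi$, I apply Fact~\ref{type:approx} to produce $\varphi(x)\in p$ and $\theta(x,y)\in r$ with $\varphi(x)\wedge\theta(x,y)\subseteq\psi(y)$. Now $\omega(\varphi(x))=\pi_{x}(\omega)(\varphi(x))=\delta_{p}(\varphi(x))=1$ and $\omega(\theta(x,y))=\omega|_{A}(\theta(x,y))=\delta_{r}(\theta(x,y))=1$, so their conjunction also has $\omega$-measure $1$ (since $\omega(\neg\varphi\vee\neg\theta)\leq\omega(\neg\varphi)+\omega(\neg\theta)=0$). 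Monotonicity then gives $\pi_{y}(\omega)(\psi(y))=\omega(\psi(y)\wedge x=x)\geq\omega(\varphi(x)\wedge\theta(x,y))=1$, as desired.

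There isn't really a hard step here; the argument is essentially a translation of the classical compactness characterization of type domination (Fact~\ref{type:approx}) into the measure language, using that a Dirac measure is determined by knowing which formulas receive measure $1$. The only small subtlety worth stating carefully is the passage from $\pi_{y}(\omega)(\psi(y))=1$ for every $\psi\in q$ to $\pi_{y}(\omega)=\delta_{q}$: for any formula $\psi(y)\notin q$ one has $\neg\psi(y)\in q$, so $\pi_{y}(\omega)(\neg\psi(y))=1$ forces $\pi_{y}(\omega)(\psi(y))=0$, identifying $\pi_{y}(\omega)$ with $\delta_{q}$.
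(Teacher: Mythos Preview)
Your proof is correct and follows essentially the same approach as the paper: set $\lambda=\delta_{r}$ for the witnessing type $r$, then for any $\omega\in\E(\lambda,\delta_{p})$ and $\psi(y)\in q$ use Fact~\ref{type:approx} to find $\varphi(x)\in p$ and $\theta(x,y)\in r$ with $\varphi(x)\wedge\theta(x,y)\subseteq\psi(y)\wedge x=x$, and conclude $\pi_{y}(\omega)(\psi(y))=1$ by monotonicity. The only cosmetic difference is that the paper invokes Fact~\ref{Basic:Fact}(3) to simplify $\omega(\varphi(x)\wedge\theta(x,y))$ whereas you argue directly via the union bound; both are fine.
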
 

\begin{proof} Suppose that $p\geq_{D,A}q$. Then there exists some complete type $r \in S_{xy}(A)$ which witnesses $p \geq_{D,A} q$. Consider the measure $\lambda : = \delta_{r}$. It is clear from construction that $\lambda \in \mathfrak{M}_{xy}(A)$ and $\pi_{x}(\lambda) = \delta_{p}|_{A}$. Fix $\omega \in \E(\lambda,\delta_{p})$. It suffices to show that $\pi_{y}(\omega) = \delta_{q}$. Fix a formula $\psi(y) \in \mathcal{L}_{y}(\mathcal{U})$ such that $\delta_{q}(\psi(y)) = 1$. By definition, $\psi(y) \in q$ and since $p \geq_{D,A} q$, there exists $\varphi(x) \in p$ and $\theta(x,y) \in r$ such that $\varphi(x) \wedge \theta(x,y) \subseteq \psi(y) \wedge x = x$ (see Fact \ref{type:approx}). Hence 
\begin{equation*}
    \pi_{y}(\omega)(\psi(y))  = \omega(\psi(y) \wedge x=x) \geq \omega(\varphi(x) \wedge \theta(x,y)) 
\end{equation*}
\begin{equation*}
    \overset{(*)}{=} \omega(\varphi(x) \wedge y =y) = \pi_{x}(\omega)(\varphi(x)) = \delta_{p}(\varphi(x)) =  1.
\end{equation*}
\begin{equation*}
    \implies \pi_{y}(\omega)=  1.
\end{equation*}
\vspace{.001in}
Equality $(*)$ follows from the fact that $1 = \lambda(\theta(x,y)) = \omega(\theta(x,y))$. The case where $\delta_q(\psi(y))=0$ is handled via negation.
\end{proof}
\begin{question} Suppose that $\delta_{p} \geq_{\mathbb{E}} \delta_{q}$. Does it follow that $p \geq_{D} q$?
\end{question}

Our next lemma allows us to extend the space of parameters.

\begin{lemma}\label{lem:ext-para} Let $A$ and $B$ be a small subsets of $\mathcal{U}$ such that $A \subseteq B$. If $\mu \geq_{\mathbb{E},A} \nu$, then $\mu \geq_{\mathbb{E},B} \nu$.
\end{lemma}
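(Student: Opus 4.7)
The plan is to construct a witness $\lambda' \in \mathfrak{M}_{xy}(B)$ for $\mu \geq_{\mathbb{E},B} \nu$ by enlarging the given witness $\lambda \in \mathfrak{M}_{xy}(A)$. The most direct way to enlarge a measure from parameters $A$ to parameters $B$ while keeping the $x$-marginal equal to $\mu|_B$ is to first push $\lambda$ all the way up to a global measure compatible with $\mu$, and then restrict back down to $B$.

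Concretely, by Lemma \ref{ext:exist}(1), the space $\E(\lambda,\mu)$ is non-empty, so I can pick some $\omega_0 \in \E(\lambda,\mu)$. I then define $\lambda' := \omega_0|_B$. Two routine verifications are needed: first, that $\lambda'$ really is a candidate witness, i.e.\ $\pi_x(\lambda') = \mu|_B$, which follows from the commutation of restriction and projection ($\pi_x(\omega_0|_B) = \pi_x(\omega_0)|_B = \mu|_B$); and second, that $\lambda'$ extends $\lambda$, i.e.\ $\lambda'|_A = \lambda$, which holds because $A \subseteq B$ gives $(\omega_0|_B)|_A = \omega_0|_A = \lambda$.

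The key point of the argument is then to show $\E(\lambda',\mu) \subseteq \E(\lambda,\mu)$, from which $\pi_y(\omega) = \nu$ for every $\omega \in \E(\lambda',\mu)$ follows immediately from the hypothesis that $\lambda$ witnesses $\mu \geq_{\mathbb{E},A} \nu$. For this inclusion: if $\omega \in \E(\lambda',\mu)$ then $\omega|_B = \lambda'$ and $\pi_x(\omega) = \mu$; using $A \subseteq B$ and $\lambda'|_A = \lambda$, we get $\omega|_A = (\omega|_B)|_A = \lambda'|_A = \lambda$, so $\omega \in \E(\lambda,\mu)$ as required.

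Honestly, I do not expect any real obstacle here: every step is a transparent bookkeeping identity about restriction and projection of measures, and the content is essentially that the definition of extension domination is monotone in the parameter set. The only mild subtlety is that one must go through the ``lift then restrict'' construction rather than trying to extend $\lambda$ directly by a purely algebraic argument, since without the global $\omega_0$ it is not obvious how to produce an extension of $\lambda$ to $B$ whose $x$-marginal is $\mu|_B$; invoking Lemma \ref{ext:exist}(1) bypasses this cleanly.
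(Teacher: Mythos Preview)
Your proof is correct and follows essentially the same approach as the paper: lift the witness $\lambda$ to a global $\omega_0 \in \E(\lambda,\mu)$ via Lemma~\ref{ext:exist}(1), set $\lambda' := \omega_0|_B$, and then check that any $\omega \in \E(\lambda',\mu)$ lies in $\E(\lambda,\mu)$ by restricting through $A \subseteq B$. The paper's argument is identical in structure and content.
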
 
\begin{proof} Since $\mu \geq_{\mathbb{E},A} \nu$, there exists some $\lambda \in \mathfrak{M}_{xy}(A)$ which witnesses domination. By Lemma \ref{ext:exist}, there exists some $\omega \in \E(\lambda,\mu)$. Let $\lambda' := \omega|_{B}$.  We claim that $\lambda'$ witnesses $\mu \geq_{\mathbb{E},B} \nu$.
\begin{enumerate} 
\item Notice that 
\begin{equation*} 
\pi_{x}(\lambda') = \pi_{x}(\omega|_{B}) = \pi_{x}(\omega)|_{B} = \mu|_{B}. 
\end{equation*} 
\item Let $\eta \in \E(\lambda',\mu)$. It suffices to show that $\pi_{y}(\eta) = \nu$. Since $\eta \in \E(\lambda',\mu)$, it follows that $\pi_{x}(\eta) = \mu$ and $\eta|_{A} = (\eta|_{B})|_{A} = \lambda'|_A =  \lambda$.  Hence $\eta \in \E(\lambda,\mu)$ and since $\mu$ dominates $\nu$ over $A$ (via $\lambda$), we conclude $\pi_{y}(\eta) = \nu$. \qedhere
\end{enumerate} 
\end{proof}

We now argue that measures dominate push-forwards of themselves (under definable maps). We recall the definition of a push-forward. 

\begin{definition}\label{def:push-for} Let $\mu \in \mathfrak{M}_{x}(\mathcal{U})$ and $f: \mathcal{U}^{x} \to \mathcal{U}^{y}$ be a definable function (possibly defined over parameters). We define the push-forward measure $f(\mu) \in \mathfrak{M}_{y}(\mathcal{U})$ where for any $\psi(y) \in \mathcal{L}_{y}(\mathcal{U})$,  
\begin{equation*} f(\mu)(\psi(y)) = \mu(\psi(f(x)). 
\end{equation*} 
\end{definition} 

\begin{proposition}\label{pushforward} Let $\mu \in \mathfrak{M}_{x}(\mathcal{U})$ and $f: \mathcal{U}^{x} \to \mathcal{U}^{y}$ be an $A$-definable function. Then $\mu \geq_{\mathbb{E},A} f(\mu)$.
\end{proposition}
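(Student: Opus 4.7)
The plan is to write down an explicit witness $\lambda \in \mathfrak{M}_{xy}(A)$ coming from the graph of $f$, and then to exploit the fact that this $\lambda$ concentrates on the set $y = f(x)$ to force the $y$-projection of any extension to agree with $f(\mu)$.

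First, since $f$ is $A$-definable, for every $\theta(x,y) \in \mathcal{L}_{xy}(A)$ the substitution $\theta(x, f(x))$ is a formula in $\mathcal{L}_{x}(A)$. I would define
\begin{equation*}
    \lambda(\theta(x,y)) := \mu|_{A}(\theta(x, f(x))).
\end{equation*}
A short check using the fact that substitution commutes with the Boolean operations shows that $\lambda \in \mathfrak{M}_{xy}(A)$. Taking $\theta(x,y) = \varphi(x) \wedge y = y$ with $\varphi(x) \in \mathcal{L}_{x}(A)$ yields $\pi_{x}(\lambda)(\varphi(x)) = \mu|_{A}(\varphi(x))$, so condition (1) of Definition~\ref{def:ext-dom} holds. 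Moreover, taking $\theta(x,y) := (y = f(x))$ gives $\lambda(y = f(x)) = \mu|_{A}(f(x) = f(x)) = 1$.

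Now fix any $\omega \in \E(\lambda,\mu)$; by Lemma~\ref{ext:exist} such an $\omega$ exists. Since $\omega|_{A} = \lambda$, we have $\omega(y = f(x)) = 1$. The core computation is then, for $\psi(y) \in \mathcal{L}_{y}(\mathcal{U})$:
\begin{equation*}
    \pi_{y}(\omega)(\psi(y)) = \omega(\psi(y) \wedge x = x) = \omega(\psi(y) \wedge y = f(x)),
\end{equation*}
using (3) of Fact~\ref{Basic:Fact}. On the set defined by $y = f(x)$, the formulas $\psi(y)$ and $\psi(f(x))$ coincide, so the right-hand side equals $\omega(\psi(f(x)) \wedge y = f(x))$, and applying (3) of Fact~\ref{Basic:Fact} once more gives $\omega(\psi(f(x)) \wedge y = y) = \pi_{x}(\omega)(\psi(f(x))) = \mu(\psi(f(x))) = f(\mu)(\psi(y))$. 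Hence $\pi_{y}(\omega) = f(\mu)$, which verifies condition (2) and establishes $\mu \geq_{\mathbb{E},A} f(\mu)$.

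I do not expect any real obstacle here: the only subtlety is making sure that the replacement of $\psi(y)$ by $\psi(f(x))$ inside $\omega$ is justified, and this is exactly the content of the ``measure-one cleanup'' part of Fact~\ref{Basic:Fact}. The rest is bookkeeping.
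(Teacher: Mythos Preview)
Your proof is correct and is essentially the same as the paper's. The only cosmetic difference is that the paper packages your $\lambda$ as the restriction to $A$ of the push-forward $g(\mu)$ along the graph map $g(a) = (a, f(a))$, whereas you define $\lambda(\theta(x,y)) = \mu|_{A}(\theta(x,f(x)))$ directly; these are literally the same measure, and the subsequent computation using $\omega(y = f(x)) = 1$ and Fact~\ref{Basic:Fact} is identical.
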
 

\begin{proof} Consider the function $g: \mathcal{U}^{x} \to \mathcal{U}^{x} \times \mathcal{U}^{y}$ via $g(a) = (a,f(a))$. The map $g$ is an $A$-definable function and so we can consider the push-forward $g(\mu)$. Define $\lambda: = g(\mu)|_{A}$. Notice that 
\begin{enumerate} 
\item $\pi_{x}(\lambda) = \mu|_{A}$ and $\pi_{y}(\lambda) = f(\mu)|_{A}$. 
\item $\lambda(f(x) = y) = 1$. 
\end{enumerate} 
Fix $\omega \in \E(\lambda,\mu)$. Consider the following computation:
\begin{equation*} \pi_{y}(\omega)(\psi(y)) = \omega(\psi(y) \wedge x = x) \overset{(*)}{=} \omega(\psi(y) \wedge f(x) = y) = \omega(\psi(f(x)) \wedge f(x) = y)
\end{equation*} 
\begin{equation*} \overset{(*)}{=} \omega(\psi(f(x)) \wedge y = y) = \pi_{x}(\omega)(\psi(f(x)) = \mu(\psi(f(x))) = f(\mu)(\psi(y)),
\end{equation*} 
where the equations with $(*)$ hold by Fact \ref{Basic:Fact} and the observation that $\omega(f(x) =y) = \lambda(f(x) = y) = 1$. 
\end{proof}

\subsection{Extension domination forms a preorder}

We show that $\geq_{\mathbb{E}}$ is a preorder on the collection of global measures, $\mathfrak{M}(\mathcal{U})$. More explicitly, we argue that the relation $\geq_{\mathbb{E}}$ is both transitive and reflexive. We begin with a measure existence lemma.

\begin{lemma}\label{ext:lemmas} Let $A \subseteq \mathcal{U}$. 
\begin{enumerate}
\item Let $\lambda_1 \in \mathfrak{M}_{xy}(A)$ and $\lambda_2 \in \mathfrak{M}_{yz}(A)$ such that $\pi_{y}(\lambda_1) = \pi_{y}(\lambda_2)$. Then there exists $\lambda_3 \in \mathfrak{M}_{xyz}(A)$ such that $\pi_{xy}(\lambda_3) = \lambda_1$ and $\pi_{yz}(\lambda_3) = \lambda_2$. 
\item Suppose that $\lambda \in \mathfrak{M}_{xyz}(A)$, $\omega \in \mathfrak{M}_{xz}(\mathcal{U})$, and $\omega|_{A} = \pi_{xz}(\lambda)$. Then there exists some measure $\omega' \in \mathfrak{M}_{xyz}(\mathcal{U})$ such that $\pi_{xy}(\omega') = \omega$ and $\omega'|_{A} = \lambda$. 
\end{enumerate} 
\end{lemma}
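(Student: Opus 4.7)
The plan is to prove both parts by applying Corollary \ref{Cor:exist}: in each case I will identify the two given measures with finitely additive probability measures on subalgebras of a common ambient Boolean algebra (either $\mathcal{L}_{xyz}(A)$ or $\mathcal{L}_{xyz}(\mathcal{U})$), verify the one-sided inequality required by the corollary, and then read the claimed marginal identities off the resulting common extension. The key trade throughout is the projection inequality $\omega(\theta(x,y)) \leq \pi_{x}(\omega)(\exists y\,\theta(x,y))$ from Fact \ref{Basic:Fact}(2), which lets me bound the measure of a formula by the measure of a projected formula controlled by a shared marginal.

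For (1), I view $\lambda_1$ as a measure on the subalgebra $\mathcal{A} := \{\varphi(x,y) \wedge z = z : \varphi(x,y) \in \mathcal{L}_{xy}(A)\}$ of $\mathcal{L}_{xyz}(A)$, and $\lambda_2$ as a measure on $\mathcal{B} := \{\psi(y,z) \wedge x = x : \psi(y,z) \in \mathcal{L}_{yz}(A)\}$. To apply Corollary \ref{Cor:exist}, I must check that whenever $\varphi(x,y) \wedge z = z \subseteq \psi(y,z) \wedge x = x$ as subsets of $\mathcal{U}^{xyz}$, we have $\lambda_1(\varphi) \leq \lambda_2(\psi)$. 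The inclusion unpacks to the valid implication $\exists x\,\varphi(x,y) \to \forall z\,\psi(y,z)$. Writing $\chi(y) := \forall z\,\psi(y,z) \in \mathcal{L}_{y}(A)$, the chain
\[ \lambda_1(\varphi) \leq \pi_{y}(\lambda_1)(\exists x\,\varphi) \leq \pi_{y}(\lambda_1)(\chi) = \pi_{y}(\lambda_2)(\chi) \leq \lambda_2(\psi) \]
settles the hypothesis, with the first step from Fact \ref{Basic:Fact}(2), the second and fourth by monotonicity, and the middle equality by the assumed agreement of $y$-marginals. The resulting common extension $\lambda_3 \in \mathfrak{M}_{xyz}(A)$ satisfies $\pi_{xy}(\lambda_3) = \lambda_1$ and $\pi_{yz}(\lambda_3) = \lambda_2$ because the definition of projection simply reads off values on the cylinder subalgebras $\mathcal{A}$ and $\mathcal{B}$.

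Part (2) is the same maneuver in the ambient algebra $\mathcal{L}_{xyz}(\mathcal{U})$, now with $\omega$ (cylindrified by $y$) and $\lambda$ living on subalgebras. Reading the claim $\pi_{xy}(\omega') = \omega$ as a typographical slip for $\pi_{xz}(\omega') = \omega$ (which is the only assertion compatible with $\omega \in \mathfrak{M}_{xz}(\mathcal{U})$), the required inequality becomes: if $\theta(x,y,z) \subseteq \varphi(x,z) \wedge y = y$, then $\lambda(\theta) \leq \omega(\varphi)$. From $\exists y\,\theta(x,y,z) \subseteq \varphi(x,z)$ one gets
\[ \lambda(\theta) \leq \pi_{xz}(\lambda)(\exists y\,\theta) = \omega|_{A}(\exists y\,\theta) \leq \omega(\varphi), \]
combining Fact \ref{Basic:Fact}(2), the hypothesis $\omega|_{A} = \pi_{xz}(\lambda)$, and monotonicity. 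Corollary \ref{Cor:exist} then supplies the required $\omega' \in \mathfrak{M}_{xyz}(\mathcal{U})$. The only friction I anticipate is bookkeeping — keeping the various cylinder embeddings straight and making sure each shared-marginal hypothesis is invoked precisely at the point where the two sides of the Corollary \ref{Cor:exist} inequality meet — and no genuine obstacle beyond that.
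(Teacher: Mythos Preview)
Your proposal is correct and follows essentially the same approach as the paper: both parts are reduced to Corollary \ref{Cor:exist} by embedding the given measures into the ambient algebra and verifying the one-sided inequality via the projection bound of Fact \ref{Basic:Fact}(2) together with the shared-marginal hypothesis. Your observation that $\pi_{xy}(\omega') = \omega$ in part (2) should read $\pi_{xz}(\omega') = \omega$ is also correct, and the paper's own proof tacitly works with $\pi_{xz}$.
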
 

\begin{proof} The proofs of the claims are similar to one another and also to Lemma \ref{ext:exist}. For clarity, we provide both proofs.
\begin{enumerate} 
\item We apply Corollary \ref{Cor:exist}. Identify the Boolean algebras $\mathcal{L}_{xy}(A)$ and $\mathcal{L}_{yz}(A)$ with their natural embeddings into $\mathcal{L}_{xyz}(A)$ as well as $\lambda_1$ and $\lambda_2$ with the corresponding finitely additive measure on the corresponding image. Suppose that $\theta_1(x,y) \in \mathcal{L}_{xy}(A)$, $\theta_2(y,z) \in \mathcal{L}_{yz}(A)$ and $(\theta_1(x,y) \wedge z =z) \subseteq (\theta_2(y,z) \wedge x = x)$ as definable sets. Then we set $\Gamma(x,y,z) : = \exists x (\theta_1(x,y) \wedge z =z) \wedge x = x$. It follows that $(\theta_1(x,y) \wedge z = z) \subseteq \Gamma(x,y,z)$ and $\Gamma(x,y,z) \subseteq (\theta_2(y,z) \wedge x =x)$. Now, 
\begin{equation*}
    \lambda_1(\theta_1(x,y) \wedge z =z) \leq \lambda_1(\Gamma(x,y,z)) = \pi_{y}(\lambda_1)(\exists x \theta_1(x,y)) 
\end{equation*}
\begin{equation*}
   = \pi_{y}(\lambda_2)(\exists x \theta_1(x,y)) = \lambda_{2}(\Gamma(x,y,z)) \leq \lambda_2(\theta_{2}(y,z) \wedge x = x). 
\end{equation*}
By Corollary \ref{Cor:exist}, there exists $\lambda_3 \in \mathfrak{M}_{xyz}(A)$ with the desired properties. 
\item Again we apply Corollary \ref{Cor:exist}. Consider the Boolean algebras $\mathcal{L}_{xyz}(A)$ and $\mathcal{L}_{xz}(\mathcal{U})$ embedded in $\mathcal{L}_{xyz}(\mathcal{U})$ with the usual identifications on the measures. Suppose that $\theta(x,y,z) \in \mathcal{L}_{xyz}(A)$ and $ \psi(x,z) \in \mathcal{L}_{xz}(\mathcal{U})$ such that $\theta(x,y,z) \subseteq (\psi(x,z) \wedge y = y)$. Let $\Gamma(x,y,z) := \exists y \theta(x,y,z) \wedge y = y$. Again, it follows that $\theta(x,y,z) \subseteq \Gamma(x,y,z) \subseteq (\psi(x,z) \wedge y = y)$. Hence
\begin{equation*} 
\lambda(\theta(x,y,z)) \leq \lambda \left( \Gamma(x,y,z)\right) = \pi_{xz}(\lambda) \left( \exists y \theta(x,y,z) \right)
\end{equation*} 
\begin{equation*} 
= \omega(\exists y(\theta(x,y,z)) = \omega(\Gamma(x,y,z)) \leq \omega \left(\psi(x,y) \wedge y = y \right).
\end{equation*}  
By Corollary \ref{Cor:exist}, there exists $\omega' \in \mathfrak{M}_{xyz}(\mathcal{U})$ with the desired properties. \qedhere
\end{enumerate}
\end{proof}

\begin{proposition}\label{transitive:1} Let $\mu \in \mathfrak{M}_{x}(\mathcal{U})$, $\nu \in \mathfrak{M}_{y}(\mathcal{U})$ and $\eta \in \mathfrak{M}_{z}(\mathcal{U})$.  If $\mu \geq_{\mathbb{E},A} \nu$ and $\nu \geq_{\mathbb{E},A} \eta$, then $\mu \geq_{\mathbb{E},A} \eta$.
\end{proposition}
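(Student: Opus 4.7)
The plan is to build a witness $\lambda_3 \in \mathfrak{M}_{xz}(A)$ for $\mu \geq_{\mathbb{E},A} \eta$ by first amalgamating the two given witnesses into a measure on the joint variables $xyz$ and then projecting down. Let $\lambda_1 \in \mathfrak{M}_{xy}(A)$ witness $\mu \geq_{\mathbb{E},A} \nu$ and $\lambda_2 \in \mathfrak{M}_{yz}(A)$ witness $\nu \geq_{\mathbb{E},A} \eta$. The first preliminary observation I would record is that a witness to domination is automatically compatible on the dominated side: since every $\omega \in \E(\lambda_1,\mu)$ satisfies $\pi_y(\omega) = \nu$, restricting to $A$ gives $\pi_y(\lambda_1) = \pi_y(\omega|_A) = \pi_y(\omega)|_A = \nu|_A$. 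By definition, $\pi_y(\lambda_2) = \nu|_A$ as well.

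With the compatibility $\pi_y(\lambda_1) = \pi_y(\lambda_2)$ in hand, I would invoke Lemma \ref{ext:lemmas}(1) to produce $\lambda \in \mathfrak{M}_{xyz}(A)$ with $\pi_{xy}(\lambda) = \lambda_1$ and $\pi_{yz}(\lambda) = \lambda_2$. Define the candidate witness $\lambda_3 := \pi_{xz}(\lambda)$. The condition $\pi_x(\lambda_3) = \mu|_A$ is then immediate, since $\pi_x(\lambda_3) = \pi_x(\lambda) = \pi_x(\lambda_1) = \mu|_A$.

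The heart of the argument is showing that every $\omega \in \E(\lambda_3,\mu)$ satisfies $\pi_z(\omega) = \eta$, and here is where I expect the one genuine move. Given such $\omega$, apply Lemma \ref{ext:lemmas}(2) to lift it to $\omega' \in \mathfrak{M}_{xyz}(\mathcal{U})$ with $\pi_{xz}(\omega') = \omega$ and $\omega'|_A = \lambda$ (the hypothesis $\omega|_A = \pi_{xz}(\lambda)$ required by the lemma holds because $\omega|_A = \lambda_3$). Then $\pi_{xy}(\omega') \in \E(\lambda_1, \mu)$ by checking the two defining conditions ($\pi_x\pi_{xy}(\omega') = \pi_x(\omega) = \mu$ and $\pi_{xy}(\omega')|_A = \pi_{xy}(\lambda) = \lambda_1$), so the first domination hypothesis gives $\pi_y(\omega') = \nu$. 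Feeding this into $\pi_{yz}(\omega')$ and repeating the same bookkeeping shows $\pi_{yz}(\omega') \in \E(\lambda_2, \nu)$, and the second domination hypothesis yields $\pi_z(\omega') = \eta$. Finally $\pi_z(\omega) = \pi_z \pi_{xz}(\omega') = \pi_z(\omega') = \eta$, which closes the argument.

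The main thing to keep straight is not conceptual but bookkeeping: one must carefully track which projections and restrictions commute, and verify at each lifting step that the hypotheses of Lemma \ref{ext:lemmas} are met. Once the compatibility $\pi_y(\lambda_1) = \pi_y(\lambda_2) = \nu|_A$ is noted at the start, the two applications of Lemma \ref{ext:lemmas} (one to build $\lambda$, one to lift an arbitrary $\omega$ back to $\omega'$) do all the real work and transitivity falls out essentially by chasing the diagram.
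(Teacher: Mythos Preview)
Your proposal is correct and follows essentially the same route as the paper's proof: amalgamate the two witnesses via Lemma~\ref{ext:lemmas}(1), project to $xz$ to get the candidate witness, and for an arbitrary $\omega$ in the extension space lift back to $xyz$ via Lemma~\ref{ext:lemmas}(2) and chase through the two domination hypotheses. The only differences are notational (you swap the roles of the names $\lambda$ and $\lambda_3$) and that you spell out explicitly why $\pi_y(\lambda_1)=\nu|_A$, which the paper records without justification.
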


\begin{proof}
Since $\mu \geq_{\mathbb{E},A} \nu$, there exists $\lambda_1 \in \mathfrak{M}_{xy}(A)$ which witnesses domination. Likewise, since $\nu \geq_{\mathbb{E},A} \mu$, there exists $\lambda_2 \in \mathfrak{M}_{yz}(A)$ which witness domination. Observe that $\pi_{y}(\lambda_1) = \pi_{y}(\lambda_2) = \nu|_{A}$. By Lemma~\ref{ext:lemmas}(1) , there exists $\lambda_{3} \in \mathfrak{M}_{xyz}(A)$ such that $\pi_{xy}(\lambda_{3}) = \lambda_1$ and $\pi_{yz}(\lambda_{3}) = \lambda_2$. 
Let $\lambda := \pi_{xz}(\lambda_3)$. We now show that $\lambda$ witnesses $\mu \geq_{\mathbb{E},A} \eta$. Observe that $\pi_{x}(\lambda) = \mu|_{A}$ and fix $\omega \in \E(\lambda,\mu)$. It suffices to show that $\pi_{z}(\omega) = \eta$. 

\begin{enumerate} 
\item By definition, $\omega|_{A} = \lambda =  \pi_{xz}(\lambda_3)$. By Lemma \ref{ext:lemmas}(2), there exists $\omega' \in \mathfrak{M}_{xyz}(\mathcal{U})$ such that $\pi_{xz}(\omega') = \omega$ and $\omega'|_{A} = \lambda_3$.
\item Consider the measure $\pi_{xy}(\omega')$. Notice that $\pi_{x}(\pi_{xy}(\omega')) = \pi_{x}(\omega) = \mu$ and $\pi_{xy}(\omega')|_{A} = \pi_{xy}(\omega'|_{A}) = \pi_{xy}(\lambda_3) = \lambda_1$. Hence $\pi_{xy}(\omega') \in \E(\lambda_1,\mu)$ and by domination, $\pi_{y}(\pi_{xy}(\omega')) = \nu$. Therefore $\pi_{y}(\omega') = \nu$. 
\item Now consider the measure $\pi_{yz}(\omega')$. By above, $\pi_{y}(\pi_{yz}(\omega')) = \pi_{y}(\omega') = \nu$. Also, $\pi_{yz}(\omega')|_{A} = \pi_{yz}(\omega'|_{A}) = \pi_{yz}(\lambda_3) = \lambda_2$. Hence $\pi_{yz}(\omega') \in \E(\lambda_2,\nu)$ and by domination, $\pi_{z}(\pi_{yz}(\omega')) = \eta$. Therefore $\pi_{z}(\omega) = \pi_{z}(\pi_{yz}(\omega')) = \eta$.
\end{enumerate} 
Thus for any $\omega \in \E(\lambda,\mu)$, $\pi_{z}(\omega) = \eta$ and so $\mu \geq_{\mathbb{E},A} \eta$. 
\end{proof}

\begin{theorem}\label{Thm:pre} The relation $\geq_{\mathbb{E}}$ is a preorder on global Keisler measures. 
\end{theorem}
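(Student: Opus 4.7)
The plan is to verify the two defining properties of a preorder, namely reflexivity and transitivity, by combining results already proved in the excerpt.

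For reflexivity, given any $\mu \in \mathfrak{M}_{x}(\mathcal{U})$, I would invoke Proposition \ref{pushforward} applied to the identity map $\id : \mathcal{U}^{x} \to \mathcal{U}^{x}$, which is $\emptyset$-definable (hence definable over any small $A$) and satisfies $\id(\mu) = \mu$. This immediately yields $\mu \geq_{\mathbb{E}, \emptyset} \mu$, and in particular $\mu \geq_{\mathbb{E}} \mu$. (Alternatively, one could exhibit an explicit witness $\lambda \in \mathfrak{M}_{xy}(\emptyset)$ on the diagonal; but the push-forward argument is cleaner and already recorded.)

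For transitivity, suppose $\mu \geq_{\mathbb{E}} \nu$ and $\nu \geq_{\mathbb{E}} \eta$. By definition there are small sets $A_1, A_2 \subseteq \mathcal{U}$ such that $\mu \geq_{\mathbb{E}, A_1} \nu$ and $\nu \geq_{\mathbb{E}, A_2} \eta$. Setting $A := A_1 \cup A_2$, Lemma \ref{lem:ext-para} yields $\mu \geq_{\mathbb{E}, A} \nu$ and $\nu \geq_{\mathbb{E}, A} \eta$ over the common parameter set. Now I would invoke Proposition \ref{transitive:1} directly to conclude $\mu \geq_{\mathbb{E}, A} \eta$, and hence $\mu \geq_{\mathbb{E}} \eta$.

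Since neither step presents any real obstacle—both are immediate consequences of results already established—the proof is essentially a one-paragraph assembly. The only subtlety worth flagging is the bookkeeping about parameter sets: the $A$-indexed relations $\geq_{\mathbb{E}, A}$ are clearly reflexive and transitive for each fixed $A$, but to obtain a preorder on $\mathfrak{M}(\mathcal{U})$ under $\geq_{\mathbb{E}}$ one needs to merge parameter sets, which is exactly the content of Lemma \ref{lem:ext-para}.
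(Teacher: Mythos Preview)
Your proposal is correct and follows essentially the same approach as the paper: reflexivity via Proposition~\ref{pushforward} applied to the identity map (over $\emptyset$), and transitivity by first enlarging to a common parameter set $A_1\cup A_2$ via Lemma~\ref{lem:ext-para} and then applying Proposition~\ref{transitive:1}. The only minor quibble is your aside that ``the $A$-indexed relations $\geq_{\mathbb{E}, A}$ are clearly reflexive\ldots for each fixed $A$'', which is not obviously true and not needed---your actual argument correctly gets reflexivity over $\emptyset$ from the pushforward proposition.
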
 

\begin{proof} We need to show that $\geq_{\mathbb{E}}$ is transitive and reflexive. 
\begin{enumerate} 
\item Transitive: Suppose that $\mu \geq_{\mathbb{E}} \nu$ and $\nu \geq_{\mathbb{E}} \eta$. Then there exists small sets $A_1$ and $A_2$ such that $\mu \geq_{\mathbb{E},A_1} \nu$ and $\nu \geq_{\mathbb{E},A_2} \eta$. 
By Lemma \ref{lem:ext-para}, we have that $\mu \geq_{\mathbb{E},A_1 \cup A_2} \nu$ 
and $\nu \geq_{\mathbb{E},A_1 \cup A_2} \eta$. By Proposition \ref{transitive:1},  $\mu \geq_{\mathbb{E},A_1 \cup A_2} \eta$ and so $\mu \geq_{\mathbb{E}} \eta$. 
\item Reflexive: Notice that $\mu$ is a push-forward of itself by considering the identity map, i.e. the map $f:\mathcal{U}^{x} \to \mathcal{U}^{y}$ via $f(a) = a$. By Proposition \ref{pushforward}, $\mu \geq_{\mathbb{E},\emptyset} \mu$ and so $\mu \geq_{\mathbb{E}} \mu$. 
\end{enumerate} 
Hence $\geq_{\mathbb{E}}$ is a preorder. 
\end{proof} 

\begin{corollary} Let $\mu \in \mathfrak{M}_{x}(\mathcal{U})$, $\nu \in \mathfrak{M}_{y}(\mathcal{U})$, and $f:\mathcal{U}^{y} \to \mathcal{U}^{z}$ be a definable map. If $\mu \geq_{\mathbb{E}} \nu$ then $\mu \geq_{\mathbb{E}} f(\nu)$.  
\end{corollary}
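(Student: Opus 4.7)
The plan is to derive this corollary as a direct consequence of the two main results that were just established, namely Proposition \ref{pushforward} (measures dominate their definable push-forwards) and the transitivity half of Theorem \ref{Thm:pre} (extension domination is transitive). No new construction is required, since all the hard work of building a three-variable amalgam that links $\lambda_1$ and $\lambda_2$ has already been done in Proposition \ref{transitive:1} via Lemma \ref{ext:lemmas}.

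First I would apply Proposition \ref{pushforward} to the $A$-definable map $f$, where $A$ is any small set over which $f$ is defined. This gives $\nu \geq_{\mathbb{E},A} f(\nu)$, and in particular $\nu \geq_{\mathbb{E}} f(\nu)$. Then, starting from the hypothesis $\mu \geq_{\mathbb{E}} \nu$, I would invoke the transitivity of $\geq_{\mathbb{E}}$ established in Theorem \ref{Thm:pre}(1) to conclude $\mu \geq_{\mathbb{E}} f(\nu)$.

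There is essentially no obstacle here: the only minor bookkeeping point is ensuring that the parameter sets witnessing the two instances of domination can be amalgamated, but this is precisely handled by Lemma \ref{lem:ext-para} inside the proof of Theorem \ref{Thm:pre}, so we can simply cite transitivity as a black box. Thus the corollary follows in two lines.
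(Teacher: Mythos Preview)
Your proposal is correct and matches the paper's own proof essentially line for line: apply Proposition \ref{pushforward} to get $\nu \geq_{\mathbb{E}} f(\nu)$, then use transitivity of $\geq_{\mathbb{E}}$ (with Lemma \ref{lem:ext-para} handling the parameter-set bookkeeping) to conclude $\mu \geq_{\mathbb{E}} f(\nu)$. The paper just makes the parameter juggling explicit by writing out $A\cup B$, but the argument is identical.
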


\begin{proof}
The map $f$ is $B$-definable for some finite set $B$ and so $\nu \geq_{\mathbb{E},B} f(\nu)$ by Proposition \ref{pushforward}. Since $\mu \geq_{\mathbb{E}} \nu$, there exists a small set $A$ such that $\mu \geq_{\mathbb{E},A} \nu$. By Lemma \ref{lem:ext-para}, $\mu \geq_{\mathbb{E},A \cup B} \nu$ and $\nu \geq_{\mathbb{E},A \cup B} f(\nu)$. By transitivity, $\mu \geq_{\mathbb{E},A\cup B} f(\nu)$. 
\end{proof}

\subsection{Approximation results} As in the case of domination for types, if $\mu$ dominates $\nu$ and $\lambda$ witnesses domination, then there is a collection of formulas which \textit{forces} this to occur. This is made precise in terms of the approximation results in this section. Moreover,  this collection of formulas forms a Boolean subalgebra of $\mathcal{L}_{xy}(\mathcal{U})$ which we denote as $\mathbb{B}_{xy}(A)$. We show that for each $\omega$ in $\E(\lambda,\mu)$ and $\epsilon > 0$, there exists formulas $\gamma^{-}(x,y)$ and $\gamma^{+}(x,y)$ in $\mathbb{B}_{xy}(A)$ which \textit{squeeze} the value of $\pi(\omega)(\varphi(y))$. An application of (topological) compactness gives us a uniform version of this result.  We begin with defining the auxiliary Boolean algebra $\mathbb{B}_{xy}(A)$.  

\begin{definition} Let $A \subseteq \mathcal{U}$. We let $\mathbb{B}_{xy}(A)$ be the Boolean subalgebra of $\mathcal{L}_{xy}(\mathcal{U})$ generated by $\{\varphi(x) \wedge y = y: \varphi(x) \in \mathcal{L}_{x}(\mathcal{U}) \} \cup \{\theta(x,y): \theta(x,y) \in \mathcal{L}_{xy}(A)\}$. 
\end{definition}

\begin{remark}\label{Bool:struc} The Boolean algebra $\mathbb{B}_{xy}(A)$ is generated by formulas of the form $\{\theta(x,y) \wedge \psi(x): \theta(x,y) \in \mathcal{L}_{xy}(A), \psi(x) \in \mathcal{L}_{x}(\mathcal{U})\}$ using only negation and intersection. Notice that $\theta(x,y) \vee \psi(x) \equiv \neg (\neg \theta(x,y) \wedge \neg \psi(x))$. 
\end{remark}

\begin{proposition}\label{approx:1} Suppose that $\mu \geq_{\mathbb{E},A} \nu$ and let $\lambda \in \mathfrak{M}_{xy}(A)$ witness domination. Then for any $\omega \in \E(\lambda, \mu)$,  $\varphi(y) \in \mathcal{L}_{y}(\mathcal{U})$, and $\epsilon > 0$, there exists formulas $\gamma^{-}(x,y), \gamma^{+}(x,y) \in \mathbb{B}_{xy}(A)$ such that 
 \begin{enumerate} 
 \item $ \gamma^{-}(x,y) \subseteq (\varphi(y) \wedge x = x) \subseteq \gamma^{+}(x,y) $
 \item $\omega(\gamma^{+}(x,y)) - \omega(\gamma^{-}(x,y)) < \epsilon$.   
 \item $|\omega(\gamma^{\dagger}(x,y)) - \nu(\varphi(y))| < \epsilon$ where $\dagger \in \{+,-\}$. 
 \end{enumerate} 
\end{proposition}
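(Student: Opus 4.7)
The plan is to reduce the three conclusions to the single equality $r_- = r_+$, where
\[ r_- := \sup\{\omega(\gamma) : \gamma \in \mathbb{B}_{xy}(A),\ \gamma \subseteq \varphi(y) \wedge x=x\}, \]
\[ r_+ := \inf\{\omega(\gamma) : \gamma \in \mathbb{B}_{xy}(A),\ \varphi(y) \wedge x=x \subseteq \gamma\}. \]
Because $\omega \in \E(\lambda,\mu)$ and $\lambda$ witnesses domination, $\pi_y(\omega) = \nu$, so $\omega(\varphi(y) \wedge x=x) = \nu(\varphi(y))$; monotonicity then forces $r_- \leq \nu(\varphi(y)) \leq r_+$. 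Once $r_- = r_+$ is established, pick $\gamma^- \in \mathbb{B}_{xy}(A)$ with $\gamma^- \subseteq \varphi(y) \wedge x=x$ and $\omega(\gamma^-) > r_- - \epsilon/2$, and $\gamma^+ \in \mathbb{B}_{xy}(A)$ with $\varphi(y) \wedge x=x \subseteq \gamma^+$ and $\omega(\gamma^+) < r_+ + \epsilon/2$; conditions (1), (2), (3) drop out immediately.

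To prove $r_- = r_+$, I would argue by contradiction. Suppose $r_- < r_+$ and fix some $d \in [r_-, r_+]$ with $d \neq \nu(\varphi(y))$ (possible since the interval is nondegenerate and contains at most one such special value). Apply Fact \ref{fact:charges-r} to the inclusion $\mathbb{B}_{xy}(A) \subseteq \mathcal{L}_{xy}(\mathcal{U})$, the measure $\omega|_{\mathbb{B}_{xy}(A)}$, and the formula $\varphi(y) \wedge x=x$: this produces a finitely additive probability measure on the Boolean algebra generated by $\mathbb{B}_{xy}(A) \cup \{\varphi(y) \wedge x=x\}$ which agrees with $\omega$ on $\mathbb{B}_{xy}(A)$ and assigns value $d$ to $\varphi(y) \wedge x=x$. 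Extend this further, via Fact \ref{fact:comm-ext} (or a routine Zorn argument for finitely additive probability measures), to some $\omega' \in \mathfrak{M}_{xy}(\mathcal{U})$.

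The key verification is $\omega' \in \E(\lambda,\mu)$. Since $\mathbb{B}_{xy}(A) \supseteq \mathcal{L}_{xy}(A)$ and $\omega'$ agrees with $\omega$ on $\mathbb{B}_{xy}(A)$, we get $\omega'|_A = \omega|_A = \lambda$; since $\mathbb{B}_{xy}(A)$ also contains every formula $\psi(x) \wedge y=y$ for $\psi(x) \in \mathcal{L}_x(\mathcal{U})$, we get $\pi_x(\omega') = \pi_x(\omega) = \mu$. Therefore $\omega' \in \E(\lambda,\mu)$, so by the domination hypothesis $\pi_y(\omega') = \nu$; but by construction $\pi_y(\omega')(\varphi(y)) = \omega'(\varphi(y) \wedge x=x) = d \neq \nu(\varphi(y))$, a contradiction.

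The main obstacle is precisely this last verification, and it explains why $\mathbb{B}_{xy}(A)$ is defined the way it is: the first family of generators pins down the $x$-marginal at $\mu$ while the second pins down the restriction to $A$ at $\lambda$, and together they force any extension of $\omega|_{\mathbb{B}_{xy}(A)}$ to land in $\E(\lambda,\mu)$. Everything else is a routine application of the finitely additive extension machinery from Section~\ref{sec:prelim}.
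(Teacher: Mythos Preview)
Your proposal is correct and follows essentially the same approach as the paper: define the inner and outer approximations $r_-,r_+$ over $\mathbb{B}_{xy}(A)$, assume $r_-<r_+$, use Fact~\ref{fact:charges-r} to produce an extension of $\omega|_{\mathbb{B}_{xy}(A)}$ assigning a wrong value to $\varphi(y)\wedge x=x$, and observe that any such extension lies in $\E(\lambda,\mu)$ because $\mathbb{B}_{xy}(A)$ contains both $\mathcal{L}_{xy}(A)$ and all $\psi(x)\wedge y=y$. The only cosmetic difference is that the paper applies Fact~\ref{fact:charges-r} directly with $\mathcal{B}=\mathcal{L}_{xy}(\mathcal{U})$ rather than extending in two stages, and your write-up makes the verification $\omega'\in\E(\lambda,\mu)$ more explicit.
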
 

\begin{proof} Fix $\varphi(y) \in \mathcal{L}_{y}(\mathcal{U})$ and let $r_1 = \sup\{\omega(\gamma(x,y)): \gamma(x,y) \in \mathbb{B}_{xy}(A), \gamma(x,y) \subseteq (\varphi(y) \wedge x = x)\}$ and let $r_2 = \inf\{\omega(\gamma(x,y)): \gamma(x,y) \in \mathbb{B}_{xy}(A), (\varphi(y) \wedge x = x) \subseteq \gamma(x,y)\}$. Clearly $r_1\leq r_2$.

 Now suppose that $r_1 < r_2$. By Fact \ref{fact:charges-r}, for any $r \in [r_1,r_2]$, there exists a measure $\omega_{r} \in \mathfrak{M}_{xy}(\mathcal{U})$ such that $\omega_{r} \supseteq \omega|_{\mathbb{B}_{xy}(A)}$ and $\omega_{r}(\varphi(y) \wedge x =x) = r$. Choose $r_{*}$ such that $\omega_{r_*}(\varphi(y) \wedge x = x) \neq \nu(\varphi(y))$. However, $\omega_{r_*}|_{\mathbb{B}_{xy}(A)} = \omega|_{\mathbb{B}_{xy}(A)}$ which implies that $\omega_{r} \in \E(\lambda,\mu)$. By domination, it follows that $\pi_{y}(\omega_{r_{*}})(\varphi(y) \wedge x = x) = \nu(\varphi(y))$, a contradiction. Therefore $r_1 = r_2 = \nu(\varphi(y))$.

Fix $\epsilon>0$. Since $r_1 = r_2 = \nu(\varphi(y))$, there exists formulas  $\gamma^{-}(x,y)$ and $\gamma^{+}(x,y)$ in  $\mathbb{B}_{xy}(A)$ such that
\begin{enumerate}
    \item[(i)] $\gamma^{-}(x,y) \subseteq (\varphi(y) \wedge x = x)$ and $\mid\omega(\gamma^{-}(x,y)) - \nu(\varphi(y))\mid <\epsilon/2$. 
    \item[(ii)] $ (\varphi(y) \wedge x = x) \subseteq \gamma^{+}(x,y)$ and $\mid \omega(\gamma^{+}(x,y))-\nu(\varphi(y))\mid <\epsilon/2$.
\end{enumerate}
Statements (1) and (3) have been satisfied while (2) follows from a basic application of the triangle inequality. 
\end{proof}

The following theorem is a uniform version of the previous proposition. 

\begin{theorem}\label{approx:2} Suppose that $\mu \geq_{\mathbb{E},A} \nu$ and $\lambda$ witnesses domination. Then for every formula $\psi(y) \in \mathcal{L}_{y}(\mathcal{U})$ and $\epsilon > 0$ there exists formulas $\chi_{\epsilon}^{-}(x,y)$ and $\chi_{\epsilon}^{+}(x,y)$ in $\mathbb{B}_{xy}(A)$ such that
\begin{enumerate}
\item $\chi_{\epsilon}^{-}(x,y) \subseteq (\psi(y) \wedge x = x) \subseteq \chi_{\epsilon}^{+}(x,y) $. 
\item for any $\omega \in \E(\lambda,\mu)$,  $\omega(\chi_{\epsilon}^{+}(x,y)) - \omega(\chi_{\epsilon}^{-}(x,y)) < \epsilon$.
\item for any $\omega \in \E(\lambda, \mu)$, $|\omega(\chi_{\epsilon}^{\dagger}(x,y)) - \nu(\varphi(y))| < \epsilon$ where $\dagger \in \{+,-\}$. 
\end{enumerate}
\end{theorem}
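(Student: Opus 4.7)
The strategy is to upgrade Proposition \ref{approx:1} from a pointwise to a uniform statement by invoking the compactness of $\E(\lambda,\mu)$ established in Lemma \ref{ext:exist}(2), together with the continuity of the evaluation maps $E_{\varphi}$ from Fact \ref{Basic:Keisler}(2). Fix $\psi(y) \in \mathcal{L}_{y}(\mathcal{U})$ and $\epsilon > 0$. For each $\omega \in \E(\lambda,\mu)$, apply Proposition \ref{approx:1} with tolerance $\epsilon/2$ to obtain formulas $\gamma^{-}_{\omega}(x,y), \gamma^{+}_{\omega}(x,y) \in \mathbb{B}_{xy}(A)$ satisfying
\[
\gamma^{-}_{\omega}(x,y) \subseteq \psi(y) \wedge x = x \subseteq \gamma^{+}_{\omega}(x,y), \qquad \omega(\gamma^{+}_{\omega}) - \omega(\gamma^{-}_{\omega}) < \epsilon/2.
\]

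Next, associate to each $\omega$ the open neighborhood
\[
U_{\omega} := \{\eta \in \mathfrak{M}_{xy}(\mathcal{U}) : \eta(\gamma^{+}_{\omega}) - \eta(\gamma^{-}_{\omega}) < \epsilon/2\},
\]
which is open by continuity of $E_{\gamma^{+}_{\omega}}$ and $E_{\gamma^{-}_{\omega}}$, and contains $\omega$ by the bound above. The family $\{U_{\omega}\}_{\omega \in \E(\lambda,\mu)}$ is an open cover of $\E(\lambda,\mu)$; since this space is compact by Lemma \ref{ext:exist}(2), extract a finite subcover indexed by $\omega_{1}, \dots, \omega_{n}$, and set
\[
\chi_{\epsilon}^{-}(x,y) := \bigvee_{i=1}^{n} \gamma^{-}_{\omega_{i}}(x,y), \qquad \chi_{\epsilon}^{+}(x,y) := \bigwedge_{i=1}^{n} \gamma^{+}_{\omega_{i}}(x,y).
\]
Both lie in $\mathbb{B}_{xy}(A)$ since $\mathbb{B}_{xy}(A)$ is a Boolean algebra, and property (1) is immediate from the corresponding containments for each $\gamma^{\pm}_{\omega_{i}}$ together with the fact that a union of subsets of a set is still a subset, and an intersection of supersets is still a superset.

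For (2), given any $\omega \in \E(\lambda,\mu)$, choose $i$ with $\omega \in U_{\omega_{i}}$. By construction $\chi_{\epsilon}^{-} \supseteq \gamma^{-}_{\omega_{i}}$ and $\chi_{\epsilon}^{+} \subseteq \gamma^{+}_{\omega_{i}}$ as definable sets, so by monotonicity of $\omega$
\[
\omega(\chi_{\epsilon}^{+}) - \omega(\chi_{\epsilon}^{-}) \leq \omega(\gamma^{+}_{\omega_{i}}) - \omega(\gamma^{-}_{\omega_{i}}) < \epsilon/2 < \epsilon.
\]
Finally for (3), since $\omega$ witnesses $\mu \geq_{\mathbb{E},A} \nu$ via $\lambda$, we have $\pi_{y}(\omega) = \nu$, hence $\omega(\psi(y) \wedge x = x) = \nu(\psi(y))$. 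Combined with the sandwich from (1), monotonicity yields $\omega(\chi_{\epsilon}^{-}) \leq \nu(\psi(y)) \leq \omega(\chi_{\epsilon}^{+})$, and (2) then forces both $\omega(\chi_{\epsilon}^{\pm})$ to lie within $\epsilon$ of $\nu(\psi(y))$. The only delicate point is ensuring that finitely many suprema and infima keep one on the correct side of $\psi(y) \wedge x = x$; this is handled by the elementary observation that $\bigvee$ of sets below a target remains below, and $\bigwedge$ of sets above remains above.
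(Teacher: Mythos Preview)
Your proposal is correct and follows essentially the same approach as the paper: obtain the pointwise approximating formulas from Proposition~\ref{approx:1}, use continuity of evaluation maps to build an open cover, extract a finite subcover by compactness of $\E(\lambda,\mu)$, and take the union and intersection of the resulting $\gamma^{\pm}$'s. Your choice of tolerance $\epsilon/2$ rather than $\epsilon$ is inessential, and your justification of (3) via the sandwich $\omega(\chi_{\epsilon}^{-}) \leq \nu(\psi(y)) \leq \omega(\chi_{\epsilon}^{+})$ is slightly more explicit than the paper's appeal to the triangle inequality, but the arguments are otherwise identical.
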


\begin{proof} Fix $\psi(y) \in \mathcal{L}_{y}(\mathcal{U})$ and  $\epsilon > 0$. By Lemma \ref{ext:exist}, the space $\E(\lambda,\mu)$ is a non-empty compact Hausdorff space. By Proposition \ref{approx:1}, for each $\omega$ in  $\E(\lambda,\mu)$, there exists formulas $\gamma_{\omega}^{-}(x,y)$ and $\gamma_{\omega}^{+}(x,y)$ in $\mathbb{B}_{xy}(A)$ such that 
\begin{enumerate}
\item $\gamma_{\omega}^{-}(x,y) \subseteq (\varphi(y) \wedge x = x) \subseteq  \gamma_{\omega}^{+}(x,y)$. 
\item $\omega(\gamma_{\omega}^{+}(x,y)) - \omega(\gamma_{\omega}^{-}(x,y)) < \epsilon$. 
\end{enumerate} 
Let $D_{\omega}:\mathfrak{M}_{xy}(\mathcal{U}) \to [0,1]$, $D_{\omega}(\eta) = \eta(\gamma_{\omega}^{+}(x,y)) - \eta(\gamma_{\omega}^{-}(x,y))$. Since evaluation maps are continuous, $D_{\omega}$ is continuous. Let $A_{\omega} = D_{\omega}^{-1}([0,\epsilon))$. Now $\bigcup_{\omega \in \E(\lambda,\mu)} A_{\omega}$ is an open cover of $\E(\lambda,\mu)$ since each $\omega$ in $\E(\lambda,\mu)$ is in $A_{\omega}$. Hence there is a finite subcover. Without loss of generality suppose that $\bigcup_{i=1}^{n} A_{\omega_{i}}$ is a finite subcover and consider the formula $\chi_{\epsilon}^{-}(x,y) := \bigvee_{i=1}^{n} \gamma_{i}^{-}(x,y)$ and $\chi_{\epsilon}^{+}(x,y):=\bigwedge_{i=1}^{n} \gamma_{i}^{+}(x,y)$. Clearly, $\chi_{\epsilon}^{-}(x,y) \subseteq (\varphi(y) \wedge x= x) \subseteq \chi_{\epsilon}^{+}(x,y)$.  

Fix $\omega \in \E(\lambda,\mu)$. Then $\omega \in A_{\omega_i}$ for some $i \leq n$ and so we have that 
\begin{equation*} 
|\omega(\chi_{\epsilon}^{+}(x,y)) - \omega(\chi_{\epsilon}^{-}(x,y)) |\leq |\omega(\gamma_{i}^{+}(x,y)) - \omega(\gamma_{i}^{-}(x,y))|=D_{\omega_i}(\omega) < \epsilon. 
\end{equation*} 
Statement (3) follows from an application of the triangle inequality. 
\end{proof}

\subsection{Localizations}

Localizations are inherently measure theoretic constructions which are trivial for types. We show that if $\mu$ dominates $\nu$ and if this domination is witnessed by a separated amalgam (e.g. the restriction of a Morley product), then $\mu$ dominates any localization of $\nu$. This phenomenon also occurs when $\mu = \delta_p$ for some type $p$ and $\nu$ is any measure.  We begin with the definition of localization of a measure \cite[Definition 1.5]{NIP-III}.  

\begin{definition} Let $\nu \in \mathfrak{M}_{y}(A)$ and $\varphi(y) \in \mathcal{L}_{y}(A)$. Suppose that $0< \nu(\varphi(y))$. Let $\nu_{[\varphi]}$ be the \emph{localization of $\nu$ to $\varphi(y)$}, i.e. for any $\psi(y) \in \mathcal{L}_{y}(A)$,
\begin{equation*}
\nu_{[\varphi]}(\psi(y)) = \frac{\nu(\varphi(y) \wedge \psi(y))}{\nu(\varphi(y))}. 
\end{equation*}
\end{definition}

\begin{proposition}\label{local:ext} Let $\mu \in \mathfrak{M}_{x}(\mathcal{U})$, $\nu \in \mathfrak{M}_{y}(\mathcal{U})$, and $\varphi(y) \in \mathcal{L}_{y}(A)$. Suppose that  $0< \nu(\varphi(y)) < 1$, $\mu \geq_{\mathbb{E},A} \nu$ and $\lambda \in \mathfrak{M}^{\am}_{xy}(A)$ witnesses domination. Then $\mu \geq_{\mathbb{E},A} \nu_{[\varphi]}$. 
\end{proposition}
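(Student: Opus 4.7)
The plan is to take as witness the localization of $\lambda$ itself in the $y$-direction. Define $\lambda' \in \mathfrak{M}_{xy}(A)$ by $\lambda'(\theta(x,y)) := \lambda(\theta(x,y) \wedge \varphi(y))/\nu(\varphi(y))$; note $\lambda(\varphi(y) \wedge x=x) = \pi_{y}(\lambda)(\varphi(y)) = \nu(\varphi(y))$, which is positive by hypothesis. A direct computation using the separated amalgam condition on $\lambda$ gives $\pi_{x}(\lambda') = \mu|_{A}$ and $\pi_{y}(\lambda') = \nu_{[\varphi]}|_{A}$, so $\lambda'$ is an admissible candidate. (It is in fact itself a separated amalgam, though I will not need that in this proof.)

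Now fix an arbitrary $\omega' \in \E(\lambda',\mu)$; the goal is to show $\pi_{y}(\omega') = \nu_{[\varphi]}$. The key idea is to reconstruct an element of $\E(\lambda,\mu)$ from $\omega'$ by pairing it with a counterpart over $\neg\varphi$. Since $0 < \nu(\varphi(y)) < 1$, the measure $\lambda'' := \lambda_{[\neg\varphi(y)]}$ is defined, has $\pi_{x}(\lambda'') = \mu|_{A}$, and by Lemma \ref{ext:exist}(1) admits some $\omega'' \in \E(\lambda'',\mu)$. Consider the convex combination
\[
\omega := \nu(\varphi(y)) \cdot \omega' + \nu(\neg\varphi(y)) \cdot \omega''.
\]
Clearly $\pi_{x}(\omega) = \mu$. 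The crucial point is that on $\mathcal{L}_{xy}(A)$ the convex combination $\nu(\varphi)\lambda_{[\varphi]} + \nu(\neg\varphi)\lambda_{[\neg\varphi]}$ literally equals $\lambda$ (since $\lambda(\theta) = \lambda(\theta \wedge \varphi) + \lambda(\theta \wedge \neg\varphi)$), so $\omega|_{A} = \lambda$ and therefore $\omega \in \E(\lambda,\mu)$.

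Invoking the hypothesis $\mu \geq_{\mathbb{E},A} \nu$ via $\lambda$ yields $\pi_{y}(\omega) = \nu$. To extract $\pi_{y}(\omega')$, fix $\psi(y) \in \mathcal{L}_{y}(\mathcal{U})$ and evaluate both sides on $\psi(y) \wedge \varphi(y)$. On one hand $\pi_{y}(\omega)(\psi \wedge \varphi) = \nu(\psi \wedge \varphi)$. On the other hand, since $\pi_{y}(\omega'|_{A}) = \pi_{y}(\lambda') = \nu_{[\varphi]}|_A$ gives $\pi_{y}(\omega')(\varphi) = 1$, Fact \ref{Basic:Fact}(3) (applied to $\pi_{y}(\omega')$) yields $\pi_{y}(\omega')(\psi \wedge \varphi) = \pi_{y}(\omega')(\psi)$; symmetrically $\pi_{y}(\omega'')(\varphi) = 0$ forces $\pi_{y}(\omega'')(\psi \wedge \varphi) = 0$. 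Hence
\[
\nu(\psi \wedge \varphi) = \nu(\varphi) \cdot \pi_{y}(\omega')(\psi),
\]
so $\pi_{y}(\omega')(\psi) = \nu_{[\varphi]}(\psi)$, as required.

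The main obstacle is the third step: manufacturing an honest element of $\E(\lambda,\mu)$ from $\omega'$, because the extension space condition $\omega|_{A} = \lambda$ is a pointwise equality on \emph{all} of $\mathcal{L}_{xy}(A)$, not just on rectangles. This is precisely where the separated amalgam hypothesis is used — it forces the marginals of $\lambda$ in the $y$-variable to be $\nu|_A$, which in turn makes $\lambda = \nu(\varphi)\lambda_{[\varphi]} + \nu(\neg\varphi)\lambda_{[\neg\varphi]}$ hold on the full Boolean algebra and not merely on product formulas.
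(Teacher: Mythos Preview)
Your proof is correct and follows essentially the same approach as the paper's: localize $\lambda$ at $\varphi(y)$, pair an arbitrary extension with one over $\neg\varphi$, take the convex combination to land back in $\E(\lambda,\mu)$, and invoke domination. One minor correction to your closing commentary: the decomposition $\lambda = \nu(\varphi)\lambda_{[\varphi]} + \nu(\neg\varphi)\lambda_{[\neg\varphi]}$ on all of $\mathcal{L}_{xy}(A)$ holds automatically once $\pi_y(\lambda)(\varphi) = \nu(\varphi)$ (which follows from domination, not from the amalgam condition); the separated amalgam hypothesis is actually needed exactly where you first invoked it, to verify $\pi_x(\lambda') = \mu|_A$ so that $\lambda'$ is an admissible witness.
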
 

\begin{proof} Let $r := \nu(\varphi(y))$ and $s:= \nu(\neg \varphi(y))$. Let $\Phi(x,y) := (\varphi(y) \wedge x = x)$ and  consider the measures $\lambda_{[\Phi]}$ and $\lambda_{[\neg \Phi]}$ in $\mathfrak{M}_{xy}(A)$. We show $\lambda_{[\Phi]}$ witness $\mu \geq_{\mathbb{E},A} \nu_{[\varphi]}$.

\begin{enumerate} 
\item We claim that $\pi_{x}(\lambda_{[\Phi]}) = \mu|_{A}$. Fix $\psi(x) \in \mathcal{L}_{x}(A)$ and notice 
\begin{equation*} 
\pi_{x}(\lambda_{[\Phi]})(\psi(x)) = \frac{\lambda(\psi(x) \wedge \varphi(y))}{\lambda(\varphi(y) \wedge x=x)} = \frac{\pi_{x}(\lambda(\psi(x))) \cdot \pi_{y}(\lambda(\varphi(y)))}{\pi_{y}(\lambda)(\varphi(y))} 
\end{equation*} 
\begin{equation*} 
= \pi_{x}(\lambda)(\psi(x)) = \mu(\psi(x)). 
\end{equation*} 
Notice that the second equality follows from the fact that $\lambda \in \mathfrak{M}^{\am}_{xy}(A)$ and so this assumption is necessary in our proof. 
\item By (1), we have some $\omega \in \E(\lambda_{[\Phi]},\mu)$. It suffices to prove that $\pi_{y}(\omega) = \nu_{[\varphi]}$. 
\item Similarly, $\pi_{x}(\lambda_{[\neg \Phi]}) = \mu|_{A}$. Hence $\E(\lambda_{[\neg \Phi]},\mu) \neq \emptyset$ and let $\eta \in \E(\lambda_{[\neg \Phi]},\mu)$. 
\item Let $\omega' = r\omega + s\eta$. We show that $\omega' \in \E(\lambda, \mu)$. 
\begin{enumerate} 
\item Claim: $\omega'|_{A} = \lambda$. Let $\theta(x,y) \in \mathcal{L}_{xy}(A)$. Then 
\begin{equation*}
 \omega'(\theta(x,y)) = (r\lambda_{[\Phi]} + s\lambda_{[\neg \Phi]})(\theta(x,y)) 
\end{equation*}
\begin{equation*}
     =  \frac{r\lambda(\theta(x,y) \wedge \varphi(y))}{r} + \frac{s\lambda(\theta(x,y) \wedge \neg \varphi(y))}{s} = \lambda(\theta(x,y)). 
\end{equation*}
\item Claim: $\pi_{x}(\omega') = \mu$. Since $\omega \in \E(\lambda_{[\Phi]},\mu)$ and $\eta \in  \E(\lambda_{[\neg\Phi]},\mu)$,
\begin{equation*}
    \pi_{x}(\omega')= \pi_{x}((r\omega + s\eta)) = r\pi_{x}(\omega) + s\pi_{x}(\eta) = r\mu + s\mu = \mu. 
\end{equation*}
\end{enumerate} 
\item Since $\mu \geq_{\mathbb{E},A} \nu$ and $\lambda$ witnesses domination, we conclude that $\pi_{y}(\omega') = \nu$. 
\item Finally, we argue that $\pi_{y}(\omega) = \nu_{[\varphi]}$. Fix $\rho(y) \in \mathcal{L}_{y}(\mathcal{U})$ and consider the following sequence of computations: 
\begin{equation*}
   \pi_{y}(\omega)(\rho(y)) = \omega(\rho(y) \wedge x = x) \overset{(a)}{=} \omega(\rho(y) \wedge \varphi(y) \wedge x = x)
\end{equation*}
\begin{equation*} 
\overset{(b)}{=} \frac{r\omega(\rho(y) \wedge \varphi(y) \wedge x = x)}{r \omega(\varphi(y) \wedge x = x)}
\end{equation*} 
\begin{equation*}
\overset{(c)}{=} \frac{r\omega(\rho(y) \wedge \varphi(y) \wedge x = x) + s\eta(\rho(y) \wedge \varphi(y) \wedge x = x))}{r\omega(\varphi(y) \wedge x = x) + s\eta(\varphi(y) \wedge x = x)} 
\end{equation*} 
\begin{equation*}
   = \frac{\omega'(\rho(y) \wedge \varphi(y) \wedge x = x)}{\omega'(\varphi(y) \wedge x = x)} 
\end{equation*}
\begin{equation*}
    = \frac{\pi_{y}(\omega')(\rho(y) \wedge \varphi(y))}{\pi_{y}(\omega')(\varphi(y))} \overset{(d)}{=} \frac{\nu(\rho(y) \wedge \varphi(y))}{\nu(\varphi(y))} = \nu_{[\varphi]}(\rho(y)). 
\end{equation*}
We now provide the following justifications: 
\begin{enumerate}[(a)]
\item Follows from $1= \lambda_{[\varphi]}(\varphi(y) \wedge x = x) = \omega(\varphi(y) \wedge x = x)$. 
\item Since $\omega(\varphi(y) \wedge x = x) = 1$, we are dividing the numerator and denominator by $r$. 
\item Follows from $\eta(\varphi(y) \wedge x = x) = 0$.
\item Follows from domination, i.e  $\pi_{y}(\omega') = \nu$. \qedhere
\end{enumerate} 
\end{enumerate}
\end{proof}

\begin{corollary} Suppose that $\delta_{p} \geq_{\mathbb{E},A} \nu$ and $0 <\nu(\varphi(y)) < 1$ for some $\varphi(y)\in \mathcal{L}(A)$. Then $\delta_{p} \geq_{\mathbb{E},A} \nu_{[\varphi]}$. 
\end{corollary}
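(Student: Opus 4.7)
The plan is to reduce this corollary directly to Proposition \ref{local:ext} by observing that when the dominating measure is a Dirac measure, any witness of extension domination is automatically a separated amalgam. The only hypothesis of Proposition \ref{local:ext} that is not already in the statement of the corollary is precisely that some witness $\lambda$ of domination lie in $\mathfrak{M}^{\am}_{xy}(A)$, so it suffices to produce such a witness.

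Concretely, I would start by fixing any $\lambda \in \mathfrak{M}_{xy}(A)$ witnessing $\delta_{p} \geq_{\mathbb{E},A} \nu$, which exists by the hypothesis. By the definition of extension domination, we have $\pi_{x}(\lambda) = \delta_{p}|_{A} = \delta_{p|_{A}}$. This is exactly the configuration handled by Lemma \ref{type:amal}, which tells us that any $\lambda \in \mathfrak{M}_{xy}(A)$ whose $x$-marginal is a Dirac measure must be a separated amalgam. Therefore $\lambda \in \mathfrak{M}^{\am}_{xy}(A)$.

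With this in hand, we are now exactly in the hypotheses of Proposition \ref{local:ext}: we have $\delta_{p} \in \mathfrak{M}_{x}(\mathcal{U})$ and $\nu \in \mathfrak{M}_{y}(\mathcal{U})$ with $0 < \nu(\varphi(y)) < 1$, and a witness $\lambda \in \mathfrak{M}^{\am}_{xy}(A)$ of $\delta_{p} \geq_{\mathbb{E},A} \nu$. Applying Proposition \ref{local:ext} directly yields $\delta_{p} \geq_{\mathbb{E},A} \nu_{[\varphi]}$, finishing the proof.

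There is no real obstacle here; the corollary is essentially just making explicit that the restrictive hypothesis of Proposition \ref{local:ext} (that domination be witnessed by a separated amalgam) is automatic for Dirac measures. The content of the argument is entirely in Lemma \ref{type:amal} and Proposition \ref{local:ext}, both of which have already been proven.
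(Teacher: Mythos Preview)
Your proposal is correct and follows exactly the same approach as the paper: fix a witness $\lambda$, invoke Lemma \ref{type:amal} to see that $\lambda$ is a separated amalgam since $\pi_x(\lambda)$ is a Dirac measure, and then apply Proposition \ref{local:ext}. There is nothing to add.
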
 

\begin{proof} Let $\lambda$ witness $\delta_{p} \geq_{\mathbb{E},A} \nu$. Then $\lambda \in \mathfrak{M}_{xy}^{\am}(A)$ (see Lemma \ref{type:amal}) and so by Proposition \ref{local:ext}, the statement holds. 

\end{proof}

\section{Basic Preservation Results}\label{sec:preserve}

In this section, we discuss several preservation theorems. For classical domination, realized types form the bottom tier of the domination preorder, and notions like invariance, finite satisfiability, and definability  are preserved under domination (See ~\cite{rosario-inv-dom}). For extension domination, we show that the collection of smooth measures form the bottom tier of the extension domination preorder and if $\mu \geq_{\mathbb{E}} \nu$ and $\mu$ is invariant, smooth, or finitely satisfiable, then $\nu$ is respectively invariant, smooth, or finitely satisfiable. For definable measures, the preservation remains open.

\subsection{Invariance} We first show that invariance is preserved under extension domination. We recall the definition. 

\begin{definition} We say that $\mu\in \mathfrak{M}_x(\mathcal{U})$ is \emph{invariant over $A$} or \emph{$A$-invariant} if for every formula $\varphi(x,z) \in \mathcal{L}_{xz}(\emptyset)$ and $b,c \in \mathcal{U}^{z}$ such that $\tp(b/A) = \tp(c/A)$, we have $\mu(\varphi(x,b)) = \mu(\varphi(x,c))$. We use $\mathfrak{M}^{\inv}_x(\mathcal{U},A)$ to denote the set of all $A$-invariant measures.

We recall that if $\sigma \in \Aut(\mathcal{U},A)$, then for any variable $x$, $\sigma$ induces a map $\sigma_{*}:\mathfrak{M}_{x}(\mathcal{U}) \to \mathfrak{M}_{x}(\mathcal{U})$ via $\sigma_{*}(\omega) = \omega_{\sigma}$ where $\omega_{\sigma}(\theta(x,b)) = \omega(\theta(x,\sigma^{-1}(b))$. We remark that $\omega$ is \textit{$A$-invariant} if and only if $\omega=\omega_\sigma$ for all $\sigma\in \Aut(\mathcal{U},A)$.
\end{definition} 

\begin{lemma}\label{aut:affine} Assume that $\mu \in \mathfrak{M}^{\inv}_{x}(\mathcal{U}, A)$, $\lambda \in \mathfrak{M}_{xy}(A)$, $\mu|_{A} = \pi_{x}(\lambda)$, and $\sigma \in \Aut(\mathcal{U},A)$. Then the map $\sigma_{*}|_{\E(\lambda,\mu)}: \E(\lambda,\mu) \to \E(\lambda,\mu) $ is an affine homeomorphism. Moreover, for every $\sigma \in \Aut(\mathcal{U},A)$, there exists a measure $\omega \in \E(\lambda,\mu)$ such that $\omega_\sigma= \omega$.
\end{lemma}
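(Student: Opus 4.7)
The plan is to verify that $\sigma_*|_{\E(\lambda,\mu)}$ is (i) a well-defined self-map, (ii) affine, (iii) a homeomorphism, and then (iv) apply a fixed-point theorem to obtain the ``moreover'' clause. Step (i) is where the real content lies, since it is precisely where the $A$-invariance hypothesis on $\mu$ is used; the remaining steps are largely formal manipulations built on facts already recorded in Section~\ref{sec:prelim}.

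First I would show that $\sigma_*$ sends $\E(\lambda,\mu)$ into itself. Fix $\omega \in \E(\lambda,\mu)$. For any $\theta(x,y) \in \mathcal{L}_{xy}(A)$, the parameters of $\theta$ lie in $A$ and are fixed pointwise by $\sigma^{-1}$, so $\omega_\sigma(\theta) = \omega(\theta) = \lambda(\theta)$; hence $\omega_\sigma|_A = \lambda$. For the projection condition, take any $\varphi(x;b) \in \mathcal{L}_{x}(\mathcal{U})$ and compute
\[
\pi_x(\omega_\sigma)(\varphi(x;b)) = \omega_\sigma(\varphi(x;b) \wedge y=y) = \omega(\varphi(x;\sigma^{-1}(b)) \wedge y=y) = \mu(\varphi(x;\sigma^{-1}(b))).
\]
Since $\sigma \in \Aut(\mathcal{U},A)$ gives $\tp(b/A) = \tp(\sigma^{-1}(b)/A)$, the $A$-invariance of $\mu$ yields $\mu(\varphi(x;\sigma^{-1}(b))) = \mu(\varphi(x;b))$. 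Thus $\pi_x(\omega_\sigma) = \mu$ and $\omega_\sigma \in \E(\lambda,\mu)$, as required.

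Next, $\sigma_*$ is affine directly from its definition, and continuous because each composition $E_\theta \circ \sigma_*$ equals the evaluation map corresponding to the formula obtained by substituting $\sigma^{-1}$ into the parameters of $\theta$; the inverse map is $(\sigma^{-1})_*$, which is continuous by the same reasoning, so $\sigma_*|_{\E(\lambda,\mu)}$ is an affine homeomorphism. For the existence of a fixed point, Lemma~\ref{ext:exist} shows that $\E(\lambda,\mu)$ is non-empty, compact, and convex, and it sits inside $\mathfrak{M}_{xy}(\mathcal{U}) \subseteq V^{*}$ equipped with the weak-$*$ topology, a locally convex Hausdorff topological vector space. The Markov--Kakutani fixed-point theorem, applied to the singleton commuting family $\{\sigma_*|_{\E(\lambda,\mu)}\}$ of continuous affine self-maps, produces a measure $\omega \in \E(\lambda,\mu)$ with $\omega_\sigma = \omega$. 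I do not expect any serious obstacle; the only subtlety is bookkeeping with $\sigma$ versus $\sigma^{-1}$ on formula parameters, and identifying exactly where $A$-invariance of $\mu$ enters the self-map verification.
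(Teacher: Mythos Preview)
Your proposal is correct and follows essentially the same route as the paper: verify $\sigma_*$ maps $\E(\lambda,\mu)$ to itself using $A$-invariance of $\mu$, check affineness and continuity, then invoke a fixed-point theorem on the compact convex set $\E(\lambda,\mu)$. The only cosmetic differences are that the paper proves injectivity and surjectivity separately (and then quotes ``continuous bijection between compact Hausdorff spaces is a homeomorphism'') rather than directly exhibiting $(\sigma^{-1})_*$ as the inverse, and the paper cites Schauder rather than Markov--Kakutani; both choices are equally valid here.
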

\begin{proof} Fix $\sigma \in \Aut(\mathcal{U},A)$. 
\begin{enumerate} 
\item We first show that $\E(\lambda,\mu)$ is invariant under $\sigma_*$. Fix $\omega \in \E(\lambda,\mu)$. Notice that $\omega_{\sigma}|_{A} = \lambda$ since $\sigma$ fixes all formulas with parameters only from $A$. Also, $\pi_{x}(\omega_{\sigma}) = \mu$ by the invariance of $\mu$ over $A$.
\item Injective: Suppose that $\omega \neq \eta$. Then there exists a formula $\varphi(x,y,b) \in \mathcal{L}_{xy}(\mathcal{U})$ such that $\omega(\varphi(x,y,b)) \neq \eta(\varphi(x,y,b))$. Then $\omega_\sigma(\varphi(x,y,\sigma(b)) \neq \eta_\sigma(\varphi(x,y,\sigma(b))$. 
\item Surjective: Fix $\omega \in \E(\lambda,\mu)$. By (1), $\omega_{\sigma^{-1}} \in \E(\lambda,\mu)$. Then $\sigma_{*}(\omega_{\sigma^{-1}}) = \omega$. 
\item Affine homeomorphism: Clearly $\sigma_*$ preserves convex combinations and by above, $\sigma_{*}(\E(\lambda,\mu)) = \E(\lambda,\mu)$. It suffices to show that $\sigma_*$ is a homeomorphism from $\mathfrak{M}_{xy}(\mathcal{U})$ to $\mathfrak{M}_{xy}(\mathcal{U})$. The map $\sigma_*$ is continuous since the inverse image of a basic open set is basic open (see Fact \ref{Basic:Keisler}). Hence $\sigma_{*}$ is a continuous bijeciton between compact Hausdorff spaces and therefore a homeomorphism. 
\item Moreover portion: By Lemma~\ref{ext:exist}, $\E(\lambda,\mu)$ is a convex compact space. By above, $\sigma_{*}|_{\E(\lambda,\mu)}: \E(\lambda,\mu) \to \E(\lambda,\mu)$ is an affine homeomorphism. Hence an application of Schauder fixed point theorem establishes the claim.  \qedhere
\end{enumerate} 
\end{proof} 
\begin{proposition}\label{pres:inv} Let $\mu \in \mathfrak{M}_{x}(\mathcal{U})$ and $\nu \in \mathfrak{M}_{y}(\mathcal{U})$. Assume that $\mu$ is invariant over $A$ and $\mu \geq_{\mathbb{E},A} \nu$. Then $\nu$ is invariant over $A$. Moreover, if $\mu$ is invariant and $\mu \geq_{\mathbb{E}} \nu$, then $\nu$ is invariant. 
\end{proposition}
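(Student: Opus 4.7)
The plan is to reduce $A$-invariance of $\nu$ to the symmetry of extension spaces under automorphisms fixing $A$ pointwise, which was already essentially isolated in Lemma~\ref{aut:affine}. Let $\lambda \in \mathfrak{M}_{xy}(A)$ be a measure witnessing $\mu \geq_{\mathbb{E},A} \nu$, and pick some $\omega \in \E(\lambda,\mu)$; such an $\omega$ exists by Lemma~\ref{ext:exist}. It suffices to show $\nu_\sigma = \nu$ for every $\sigma \in \Aut(\mathcal{U},A)$.

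Fix such a $\sigma$. Because $\sigma$ fixes $A$ pointwise, $\omega_\sigma|_A = \omega|_A = \lambda$, and because $\mu$ is $A$-invariant, $\pi_x(\omega_\sigma) = (\pi_x(\omega))_\sigma = \mu_\sigma = \mu$. Hence $\omega_\sigma \in \E(\lambda,\mu)$ (this is exactly part~(1) from the proof of Lemma~\ref{aut:affine}). The domination hypothesis then forces $\pi_y(\omega_\sigma) = \nu$. On the other hand, unravelling the definitions, for any $\psi(y,b) \in \mathcal{L}_y(\mathcal{U})$,
\[
\pi_y(\omega_\sigma)(\psi(y,b)) = \omega_\sigma(\psi(y,b) \wedge x=x) = \omega(\psi(y,\sigma^{-1}b) \wedge x=x) = (\pi_y(\omega))_\sigma(\psi(y,b)) = \nu_\sigma(\psi(y,b)),
\]
so $\pi_y(\omega_\sigma) = \nu_\sigma$. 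Comparing the two expressions for $\pi_y(\omega_\sigma)$ yields $\nu_\sigma = \nu$.

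For the moreover clause, suppose $\mu$ is $A_1$-invariant and $\mu \geq_{\mathbb{E},A_2} \nu$ for small sets $A_1, A_2$. Setting $A := A_1 \cup A_2$, $\mu$ remains invariant over $A$, and by Lemma~\ref{lem:ext-para} we still have $\mu \geq_{\mathbb{E},A} \nu$; the first part then gives $A$-invariance of $\nu$, so $\nu$ is invariant. No step presents any real obstacle here: the argument is a short diagram chase and, notably, the Schauder fixed-point input in Lemma~\ref{aut:affine} is not needed — what is genuinely used is only that the action $\sigma_*$ preserves $\E(\lambda,\mu)$, together with the compatibility of $\sigma_*$ with the projection $\pi_y$.
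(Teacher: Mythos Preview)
Your argument is correct. It differs from the paper's proof in one meaningful way: the paper invokes the ``moreover'' clause of Lemma~\ref{aut:affine}, applying the Schauder fixed-point theorem to produce, for each $\sigma\in\Aut(\mathcal{U},A)$, a $\sigma_*$-fixed point $\omega\in\E(\lambda,\mu)$ and then reading off $\nu(\varphi(y,c))=\omega(\varphi(y,c)\wedge x=x)=\omega(\varphi(y,b)\wedge x=x)=\nu(\varphi(y,b))$ directly from $\omega_\sigma=\omega$. You instead fix a single $\omega$ once and for all, observe that $\omega_\sigma$ also lies in $\E(\lambda,\mu)$ (only the stability of the extension space under $\sigma_*$ is needed), and then compare $\pi_y(\omega_\sigma)=\nu$ coming from domination with $\pi_y(\omega_\sigma)=(\pi_y(\omega))_\sigma=\nu_\sigma$ coming from the definition. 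Your route is strictly more elementary: it bypasses the fixed-point theorem entirely and, as you note, uses only the first item in the proof of Lemma~\ref{aut:affine}. The paper's approach has the minor conceptual payoff of exhibiting an actual $\sigma$-invariant witness in $\E(\lambda,\mu)$, but for the statement at hand this is unnecessary.
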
 

\begin{proof} Fix $\varphi(y,z) \in \mathcal{L}_{xy}(\emptyset)$ and suppose that $b, c \in \mathcal{U}^{z}$ with $\tp(b/A) = \tp(c/A)$. Choose $\sigma \in \Aut(\mathcal{U}/A)$ such that $\sigma(b) = c$. Since $\mu \geq_{\mathbb{E},A} \nu$, there exists some $\lambda \in \mathfrak{M}_{xy}(A)$ witnessing domination. By Lemma \ref{aut:affine}, take $\omega \in \E(\lambda,\mu)$ such that $\omega_\sigma=\omega$. By domination, $\pi_{y}(\omega) = \nu$. Thus $\nu(\varphi(y,c))=\omega(\varphi(y,c)\wedge x=x)=\omega(\varphi(y,b)\wedge x=x)=\nu(\varphi(y,b))$, proving the $A$-invariance of $\nu$.

The moreover portion follows from the previous paragraph, the  and an application of Lemma \ref{lem:ext-para}.
\end{proof} 

\subsection{Smoothness} We show that the collection of smooth measures form the bottom tier of the extension domination preorder and smoothness is preserved under extension domination. It is interesting to note that the proofs of these propositions follow from the usual definition of smoothness (i.e. unique extensions) and not the associated \textit{squeeze property}. We recall the definition of smooth \cite[Definition 2.5]{hrushovski2008groups}, \cite[Definition 2.1]{NIP-III}.

\begin{definition} Let $\mu \in \mathfrak{M}_{x}(\mathcal{U})$. We say that $\mu$ is \textit{smooth} over $A$ if for any $\nu \in \mathfrak{M}_{x}(\mathcal{U})$ such that $\nu|_{A} = \mu|_{A}$, then $\nu = \mu$. We say $\mu$ is \textit{smooth} if it is smooth over some small $A$.

Moreover, if $\eta \in \mathfrak{M}_{x}(A)$, we say that $\eta$ is smooth over $A$ if there exists a unique measure $\mu \in \mathfrak{M}_{x}(\mathcal{U})$ such that $\mu|_{A} = \eta$. We call $\mu$ the \textit{unique extension} of $\eta$. 
\end{definition} 
Note that for a global type $p$, $\delta_p$ is smooth if and only if $p$ is realized. 

\begin{proposition}\label{smooth:all} Let $\nu \in \mathfrak{M}_{y}(\mathcal{U})$, $A \subseteq \mathcal{U}$, and suppose that $\nu$ is smooth over $A$. Then for any $\mu \in \mathfrak{M}_{x}(\mathcal{U})$,  $\mu \geq_{\mathbb{E},A} \nu$. 
\end{proposition}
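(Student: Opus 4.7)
The plan is to choose as a witness $\lambda \in \mathfrak{M}_{xy}(A)$ any separated amalgam of $\mu|_A$ and $\nu|_A$, and then use smoothness of $\nu$ over $A$ to conclude that the $y$-marginal of any extension is forced to equal $\nu$.

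More precisely, first I would invoke Remark~\ref{samalgam:exist} applied to the measures $\mu|_A \in \mathfrak{M}_x(A)$ and $\nu|_A \in \mathfrak{M}_y(A)$ to produce a separated amalgam $\lambda \in \mathfrak{M}_{xy}^{\am}(A)$ satisfying $\pi_x(\lambda) = \mu|_A$ and $\pi_y(\lambda) = \nu|_A$. The first marginal condition is exactly item (1) in Definition~\ref{def:ext-dom}, so $\lambda$ is a candidate witness.

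Next I would verify item (2): fix an arbitrary $\omega \in \E(\lambda,\mu)$ (this set is non-empty by Lemma~\ref{ext:exist}) and show $\pi_y(\omega) = \nu$. By definition of $\E(\lambda,\mu)$, we have $\omega|_A = \lambda$. Since restrictions and projections commute,
\begin{equation*}
\pi_y(\omega)\big|_A \;=\; \pi_y(\omega|_A) \;=\; \pi_y(\lambda) \;=\; \nu|_A.
\end{equation*}
Thus $\pi_y(\omega)$ is a global Keisler measure in $y$ whose restriction to $A$ agrees with $\nu|_A$. Since $\nu$ is smooth over $A$, the uniqueness of extensions forces $\pi_y(\omega) = \nu$, as required.

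There is no real obstacle here: the only subtlety is identifying the right witness, and the separated amalgam is the natural choice because it makes the $y$-marginal of $\lambda$ equal to $\nu|_A$ without imposing any further constraint coming from $\mu$. Smoothness of $\nu$ then does all the work of propagating the agreement from $A$ to $\mathcal{U}$, so no fine-grained measure manipulation or squeezing argument (as in Proposition~\ref{approx:1}) is needed.
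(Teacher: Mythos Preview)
Your proposal is correct and follows essentially the same approach as the paper: choose $\lambda \in \mathfrak{M}_{xy}(A)$ with $\pi_x(\lambda)=\mu|_A$ and $\pi_y(\lambda)=\nu|_A$ (existence via Remark~\ref{samalgam:exist}), then for any $\omega \in \E(\lambda,\mu)$ compute $\pi_y(\omega)|_A = \pi_y(\lambda) = \nu|_A$ and conclude $\pi_y(\omega)=\nu$ by smoothness. The only cosmetic difference is that the paper phrases the choice as ``any $\lambda$ with the right marginals'' rather than explicitly requiring $\lambda$ to be a separated amalgam, but the existence is justified the same way.
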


\begin{proof} Choose any $\lambda$ in $\mathfrak{M}_{xy}(A)$ such that $\pi_{x}(\lambda) = \mu|_A$ and $\pi_{y}(\lambda) = \nu|_A$. By Remark \ref{samalgam:exist}, such $\lambda$ exists. Let $\omega \in \E(\lambda, \mu)$. Notice that 
\begin{equation*} \pi_{y}(\omega)|_{A} = \pi_{y}(\omega|_{A}) = \pi_{y}(\lambda) = \nu|_{A}. 
\end{equation*} 
 Hence $\pi_{y}(\omega)$ extends $\nu|_{A}$. Since $\nu$ is smooth over $A$, it follows that $\pi_{y}(\omega) = \nu$. 
\end{proof}

\begin{proposition}\label{smooth:pres} Suppose that $\mu$ is smooth over $A$ and $\mu \geq_{\mathbb{E},A} \nu$. Then $\nu$ is smooth over $A$. 
\end{proposition}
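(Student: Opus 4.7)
The plan is to reduce the smoothness of $\nu$ to the smoothness of $\mu$ by using the witness $\lambda \in \mathfrak{M}_{xy}(A)$ of $\mu \geq_{\mathbb{E},A} \nu$ as a bridge. More precisely, given any $\nu' \in \mathfrak{M}_{y}(\mathcal{U})$ extending $\nu|_{A}$, I will construct an element of $\E(\lambda,\mu)$ whose $y$-marginal is $\nu'$, and then quote domination to conclude $\nu' = \nu$.

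First I would establish the small identity $\pi_{y}(\lambda) = \nu|_{A}$: pick any $\omega \in \E(\lambda,\mu)$, which is non-empty by Lemma \ref{ext:exist}(1); then $\pi_{y}(\lambda) = \pi_{y}(\omega|_{A}) = \pi_{y}(\omega)|_{A} = \nu|_{A}$ by domination. In particular $\nu'|_{A} = \pi_{y}(\lambda)$, which provides the consistency needed for the next step.

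The key construction is the $y$-dual of Lemma \ref{ext:exist}(1): since $\nu'|_{A} = \pi_{y}(\lambda)$, an application of Corollary \ref{Cor:exist}, argued exactly as in the proof of Lemma \ref{ext:exist}(1) but with $x$ and $y$ interchanged, yields some $\omega'' \in \mathfrak{M}_{xy}(\mathcal{U})$ with $\omega''|_{A} = \lambda$ and $\pi_{y}(\omega'') = \nu'$. Now consider $\pi_{x}(\omega'')$. Restricted to $A$ it satisfies $\pi_{x}(\omega'')|_{A} = \pi_{x}(\omega''|_{A}) = \pi_{x}(\lambda) = \mu|_{A}$, so $\pi_{x}(\omega'')$ is a global extension of $\mu|_{A}$. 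By smoothness of $\mu$ over $A$, this forces $\pi_{x}(\omega'') = \mu$, which places $\omega''$ inside $\E(\lambda,\mu)$. Invoking the hypothesis $\mu \geq_{\mathbb{E},A} \nu$ witnessed by $\lambda$ gives $\pi_{y}(\omega'') = \nu$, and comparing with the construction yields $\nu = \nu'$.

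The only real obstacle is realizing that one wants to apply a symmetric version of Lemma \ref{ext:exist}(1) (a common extension in which we control the $y$-marginal instead of the $x$-marginal); once this observation is made the rest is a direct chase of restrictions and projections, with smoothness of $\mu$ doing exactly the work of certifying membership in $\E(\lambda,\mu)$ so that domination can be triggered.
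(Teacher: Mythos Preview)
Your proof is correct and follows essentially the same approach as the paper's own argument: both use the $y$-symmetric version of Lemma~\ref{ext:exist}(1) to produce, from an arbitrary global extension of $\nu|_A$, a measure $\omega'' \in \mathfrak{M}_{xy}(\mathcal{U})$ with $\omega''|_A = \lambda$ and prescribed $y$-marginal, then invoke smoothness of $\mu$ to force $\pi_x(\omega'') = \mu$ and hence $\omega'' \in \E(\lambda,\mu)$, so domination pins down the $y$-marginal as $\nu$. The only cosmetic differences are that the paper phrases it as a contradiction with two putative extensions $\nu_1 \neq \nu_2$ rather than directly showing an arbitrary extension equals $\nu$, and that you are more explicit in first verifying $\pi_y(\lambda) = \nu|_A$ (which the paper uses without comment when asserting $\E(\lambda,\nu_i) \neq \emptyset$).
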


\begin{proof} Let $\lambda$ witness $\mu \geq_{\mathbb{E},A} \nu$. For any $\omega \in \E(\lambda,\mu)$, $\pi_{y}(\omega) = \nu$. Suppose that there exists $\nu_1$ and $\nu_2$ in $\mathfrak{M}_{y}(\mathcal{U})$ such that $\nu_1 \neq \nu_2$ and $\nu_1|_{A} = \nu_2|_{A} = \nu|_{A}$. By Lemma \ref{ext:exist}, the sets $\E(\lambda,\nu_1)$ and $\E(\lambda,\nu_2)$ are non-empty and so let $\omega_i \in \E(\lambda,\nu_i)$ for $i \in \{1,2\}$. 

We now argue that $\omega_1,\omega_2 \in \E(\lambda,\mu)$. By construction, $\omega_i|_{A} = \lambda$. Now notice that $\pi_{x}(\omega_i)|_{A} = \pi_{x}(\omega_i|_{A})  = \pi_{x}(\lambda) = \mu|_{A}$. Since $\mu$ is smooth over $A$, we have that $\pi_{x}(\omega_i) = \mu$. Hence $\omega_1,\omega_2 \in \E(\lambda,\mu)$. By domination (i.e. $\mu \geq_{\mathbb{E},A} \nu)$, we have that 
\begin{equation*} \nu_1 = \pi_{y}(\omega_1) = \nu = \pi_{y}(\omega_2) = \nu_2,
\end{equation*}
which completes the proof. 
\end{proof}

\begin{proposition}\label{smooth:2} Suppose that $\mu \geq_{\mathbb{E}} \nu$ and $\mu$ is smooth. Then $\nu$ is smooth. 
\end{proposition}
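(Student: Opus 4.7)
The plan is to reduce the statement to Proposition~\ref{smooth:pres} by aligning the two a priori distinct parameter sets: the one witnessing extension domination and the one witnessing smoothness.

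First, I would unpack the hypotheses. Since $\mu \geq_{\mathbb{E}} \nu$, there is a small set $B \subseteq \mathcal{U}$ such that $\mu \geq_{\mathbb{E},B} \nu$. Since $\mu$ is smooth, there is a small set $C \subseteq \mathcal{U}$ such that $\mu$ is smooth over $C$. Set $A := B \cup C$, which is still small.

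Next, I would verify two monotonicity-in-parameters statements for $A$. For smoothness: if $\nu' \in \mathfrak{M}_{x}(\mathcal{U})$ satisfies $\nu'|_{A} = \mu|_{A}$, then $\nu'|_{C} = \mu|_{C}$ by restriction, and so smoothness of $\mu$ over $C$ forces $\nu' = \mu$; hence $\mu$ is smooth over $A$. For extension domination: by Lemma~\ref{lem:ext-para}, $\mu \geq_{\mathbb{E},B} \nu$ implies $\mu \geq_{\mathbb{E},A} \nu$.

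Finally, I would apply Proposition~\ref{smooth:pres} with parameter set $A$: since $\mu$ is smooth over $A$ and $\mu \geq_{\mathbb{E},A} \nu$, we conclude that $\nu$ is smooth over $A$, and in particular $\nu$ is smooth. There is no real obstacle here beyond making the parameter sets compatible; the substance of the argument has already been carried out in Proposition~\ref{smooth:pres}.
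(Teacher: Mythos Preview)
Your proposal is correct and is essentially identical to the paper's proof: both take the union of the two small parameter sets, use that smoothness is upward monotone in parameters, invoke Lemma~\ref{lem:ext-para} to enlarge the domination base, and then apply Proposition~\ref{smooth:pres}. The only difference is notational (your $B,C$ versus the paper's $A,B$).
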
 

\begin{proof} Since $\mu$ is smooth, there exists some small set $A$ such that $\mu$ is smooth over $A$. It is clear from the definition that if $C \supseteq A$, then $\mu$ is also smooth over $C$. Since $\mu$ dominates $\nu$, there exists some small set $B$ such that $\mu \geq_{\mathbb{E},B} \nu$. By Lemma~\ref{lem:ext-para}, $\mu \geq_{\mathbb{E},A \cup B} \nu$. By the previous proposition, $\nu$ is smooth over $A \cup B$. 
\end{proof} 

The next proposition shows that if the measure \textit{witnessing domination} is smooth, then both measures involved are smooth. 

\begin{proposition} Suppose that $\mu \geq_{\mathbb{E},A} \nu$ and $\lambda$ witnesses domination. If $\lambda$ is smooth over $A$ then both $\mu$ and $\nu$ are smooth over $A$.  
\end{proposition}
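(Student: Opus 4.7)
The plan is to leverage the fact that $\lambda$ being smooth over $A$ means $\lambda$ has a \emph{unique} extension $\omega^{\ast} \in \mathfrak{M}_{xy}(\mathcal{U})$. By Lemma~\ref{ext:exist} and the definition of domination, $\E(\lambda,\mu) \neq \emptyset$, and every element of $\E(\lambda,\mu)$ restricts to $\lambda$ over $A$. Hence $\E(\lambda,\mu) = \{\omega^{\ast}\}$, and in particular $\pi_x(\omega^{\ast}) = \mu$ and $\pi_y(\omega^{\ast}) = \nu$ (using that $\lambda$ witnesses domination). The whole argument boils down to exploiting this collapse of the extension space.

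For smoothness of $\mu$ over $A$, I would let $\mu_1 \in \mathfrak{M}_x(\mathcal{U})$ satisfy $\mu_1|_A = \mu|_A = \pi_x(\lambda)$. By Lemma~\ref{ext:exist}, $\E(\lambda,\mu_1)$ is nonempty, so pick any $\omega_1 \in \E(\lambda,\mu_1)$. Since $\omega_1|_A = \lambda$ and $\lambda$ is smooth over $A$, we get $\omega_1 = \omega^{\ast}$. Therefore
\begin{equation*}
\mu_1 = \pi_x(\omega_1) = \pi_x(\omega^{\ast}) = \mu,
\end{equation*}
which establishes smoothness of $\mu$ over $A$.

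For smoothness of $\nu$ over $A$, first observe that for any $\omega \in \E(\lambda,\mu)$,
\begin{equation*}
\pi_y(\lambda) = \pi_y(\omega|_A) = \pi_y(\omega)|_A = \nu|_A.
\end{equation*}
Now let $\nu_1 \in \mathfrak{M}_y(\mathcal{U})$ with $\nu_1|_A = \nu|_A = \pi_y(\lambda)$. Applying the symmetric version of Lemma~\ref{ext:exist} (swapping the roles of $x$ and $y$), there exists $\omega_1 \in \mathfrak{M}_{xy}(\mathcal{U})$ such that $\omega_1|_A = \lambda$ and $\pi_y(\omega_1) = \nu_1$. Smoothness of $\lambda$ again forces $\omega_1 = \omega^{\ast}$, so
\begin{equation*}
\nu_1 = \pi_y(\omega_1) = \pi_y(\omega^{\ast}) = \nu.
\end{equation*}

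The argument is quite clean and the only mildly subtle point is recognizing that smoothness of $\lambda$ collapses the entire extension space to a single measure, which then simultaneously pins down the $x$- and $y$-marginals of every candidate extension of $\mu|_A$ or $\nu|_A$. The hardest thing to check is really just that Lemma~\ref{ext:exist} applies symmetrically on the $y$-side, which is immediate from its proof.
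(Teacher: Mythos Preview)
Your proof is correct and follows essentially the same approach as the paper's: both use Lemma~\ref{ext:exist} (and its $y$-symmetric version) to produce extensions of $\lambda$ whose $x$- or $y$-marginal is a given candidate measure, and then invoke smoothness of $\lambda$ to collapse everything to a single extension. The only cosmetic difference is that the paper phrases the argument by contradiction (two distinct extensions of $\mu|_A$ or $\nu|_A$ yield two distinct extensions of $\lambda$), whereas you frame it directly via the unique extension $\omega^{\ast}$.
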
 

\begin{proof} Suppose that $\mu$ is not smooth over $A$. Then there exists distinct $\mu_1, \mu_2 \in \mathfrak{M}_{x}(\mathcal{U})$ such that $\mu_1|_{A} = \mu_2|_{A} = \mu|_{A}$. By Lemma  \ref{ext:exist}, the spaces $\E(\lambda,\mu_1)$ and $\E(\lambda,\mu_2)$ are non-empty. Let $\omega_1 \in \E(\lambda,\mu_1)$ an $\omega_2 \in \E(\lambda,\mu_2)$. Notice that $\omega_1 \neq \omega_2$ but $\omega_1|_{A} = \omega_2|_A = \lambda$, a contradiction. A similar argument demonstrates that $\nu$ is smooth over $A$. 
\end{proof}

\subsection{Finite Satisfiability}
In this section, we show that finitely satisfiability is preserved under extension domination. Unlike the proofs of the previous two preservation results, the proof in this case follows from the approximation lemma derived in the first section of the paper (i.e. Proposition \ref{approx:1}). We recall the definition \cite[Definition 2.5]{hrushovski2008groups}. 

\begin{definition} Suppose that $M \prec \mathcal{U}$ and $\mu \in \mathfrak{M}_{x}(\mathcal{U})$. We say that $\mu$ is \textit{finitely satisfiable} in $M$ if for any $\varphi(x)$ in $\mathcal{L}_{x}(\mathcal{U})$, if $\mu(\varphi(x)) > 0$ then there exists some $a \in M$ such that $\mathcal{U} \models \varphi(a)$. We say that $\mu$ is \textit{finitely satisfiable} if it is finitely satisfiable over some small $M\prec \mathcal{U}$
\end{definition}

\begin{lemma}\label{pres:finsat} Let $M \prec \mathcal{U}$ be a small submodel. Suppose that $\mu \in \mathfrak{M}_{x}(\mathcal{U})$, $\mu$ is finitely satisfiable in $M$, $\lambda \in \mathfrak{M}_{xy}(M)$ and $\pi_{x}(\lambda) = \mu|_M$. Let $\omega \in \E(\lambda, \mu)$. Then for any $\gamma(x,y) \in \mathbb{B}_{xy}(M)$, if $\omega(\gamma(x,y)) > 0$, then there exists $(a_0,b_0) \in M^{xy}$ such that $\mathcal{U} \models \gamma(a_0,b_0)$. 
\end{lemma}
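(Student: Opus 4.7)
The plan is to reduce to the case where $\gamma(x,y)$ is a generator of $\mathbb{B}_{xy}(M)$ by a disjunctive normal form argument, and then combine the finite satisfiability of $\mu$ in $M$ with the elementarity $M \prec \mathcal{U}$. First, by Remark \ref{Bool:struc}, the class $\mathcal{F} := \{\theta(x,y) \wedge \psi(x) : \theta(x,y) \in \mathcal{L}_{xy}(M),\ \psi(x) \in \mathcal{L}_{x}(\mathcal{U})\}$ is closed under finite intersections, and the complement of any element of $\mathcal{F}$ lies in the union-closure of $\mathcal{F}$ (since $\neg(\theta \wedge \psi)$ equals $(\neg \theta \wedge x=x) \vee ((x=x \wedge y=y) \wedge \neg \psi)$, whose disjuncts are in $\mathcal{F}$). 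Hence every $\gamma(x,y) \in \mathbb{B}_{xy}(M)$ can be written as a finite disjunction $\bigvee_{i=1}^{n}\bigl(\theta_i(x,y) \wedge \psi_i(x)\bigr)$ with $\theta_i(x,y) \in \mathcal{L}_{xy}(M)$ and $\psi_i(x) \in \mathcal{L}_x(\mathcal{U})$.

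Assuming $\omega(\gamma(x,y)) > 0$, finite additivity forces $\omega(\theta_i(x,y) \wedge \psi_i(x)) > 0$ for at least one index $i$. Since any $(a_0,b_0)$ satisfying $\theta_i(x,y) \wedge \psi_i(x)$ automatically satisfies $\gamma(x,y)$, it is enough to produce such a pair. Setting $\theta := \theta_i$ and $\psi := \psi_i$, I would use Fact \ref{Basic:Fact}(2) together with $\pi_x(\omega) = \mu$ to obtain
\begin{equation*}
0 < \omega(\theta(x,y) \wedge \psi(x)) \leq \pi_x(\omega)\bigl(\exists y\, \theta(x,y) \wedge \psi(x)\bigr) = \mu\bigl(\exists y\, \theta(x,y) \wedge \psi(x)\bigr).
\end{equation*}
Finite satisfiability of $\mu$ in $M$ then yields $a_0 \in M^{x}$ with $\mathcal{U} \models \exists y\, \theta(a_0,y) \wedge \psi(a_0)$. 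Since $\theta(a_0,y) \in \mathcal{L}_y(M)$ and $M \prec \mathcal{U}$, elementarity produces $b_0 \in M^y$ with $M \models \theta(a_0,b_0)$, hence $\mathcal{U} \models \theta(a_0,b_0) \wedge \psi(a_0)$, i.e. $\mathcal{U} \models \gamma(a_0,b_0)$.

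The only delicate point is the disjunctive normal form reduction: one has to be careful that the extra factor $\psi(x) \in \mathcal{L}_x(\mathcal{U})$ may involve parameters outside $M$, so that $\gamma$ itself is genuinely a formula over $\mathcal{U}$ rather than over $M$. This is precisely what makes finite satisfiability (rather than just consistency with $M$) the right tool for the $x$-coordinate, and elementarity the right tool for the $y$-coordinate. Once the reduction is in place, the argument is a one-line application of Fact \ref{Basic:Fact} followed by these two structural principles.
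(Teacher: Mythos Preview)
Your proof is correct. The approach is essentially the same as the paper's: both rely on the disjunctive normal form of elements of $\mathbb{B}_{xy}(M)$, finite satisfiability of $\mu$ to land $a_0$ in $M$, and the fact that the $\theta$-part is over $M$ to find $b_0\in M$. The only organizational difference is that you perform the DNF reduction \emph{before} projecting, so that after obtaining $a_0$ you can invoke elementarity of $M\prec\mathcal{U}$ directly on the single $M$-formula $\theta(a_0,y)$; the paper instead projects $\gamma$ first and then, having fixed $a_0\in M$, uses the DNF shape to argue that the set $\{b:\mathcal{U}\models\gamma(a_0,b)\}$ is $\Aut(\mathcal{U}/M)$-invariant and hence $M$-definable. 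Your ordering is arguably cleaner, since it replaces the automorphism-invariance step by a one-line appeal to elementarity.
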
 

\begin{proof} 
Let $\gamma(x,y)$ be given such that $\omega(\gamma(x,y))>0$. Since $\pi_x(\omega)=\mu$, we have that $\mu(\exists y \gamma(x,y))=\omega(\exists y\gamma(x,y)\wedge y=y)\geq\omega(\gamma(x,y))>0$. Since $\mu$ is finitely satisfiable in $M$, there exists some $a_0\in M$ such that $\mathcal{U} \models \exists y\gamma(a_0,y)$. Consider the set $B=\{b\in \mathcal{U}^y: \models \gamma(a_0,b)\}$. The set $B$ is a non-empty $\mathcal{U}$-definable subset of $\mathcal{U}$. We will argue that $B$ is $M$-invariant, hence $M$-definable, and thus $B \cap M^{y} \neq \emptyset$. By the definition of the Boolean algebra $\mathbb{B}_{xy}(M)$,
 \begin{equation*} \gamma(a_0,y) \equiv \bigvee_{i=1}^{n} \bigwedge_{j_i=1}^{m_i} (\theta_{j_i}(a_0,y) \wedge \psi_{j_i}(a_0))^{\varepsilon_{i}(j_{i})},
\end{equation*} 
where $\varepsilon_{i}: \{1,...,m_i\} \to \{0,1\}$. For $b\in \mathcal{U}^y$ and $\sigma\in \Aut(\mathcal{U}, M)$, $\mathcal{U} \models \theta_{j_{i}}(a_0,b)$ if and only if  $\mathcal{U} \models \theta_i(a_0,\sigma(b))$ since all the parameters which occur in $\theta_i(a_0,y)$ are in $M$. Moreover, $\sigma$ does not affect $\mathcal{U} \models \psi_i(a_0)$ since $\sigma(a_0)=a_0$. Thus $\mathcal{U} \models \gamma(a_0,b)$ if and only if $\mathcal{U} \models \gamma(a_0,\sigma(b))$ and so $B$ is $M$-invariant. Choose $b_0 \in B \cap M^{y}$ and so $\mathcal{U} \models \gamma(a_0,b_0)$. This completes the proof. 
\end{proof}

\begin{theorem} Suppose that $M \prec \mathcal{U}$,  $\mu$ is finitely satisfiable in $M$, and $\mu \geq_{\mathbb{E},M} \nu$. Then $\nu$ is finitely satisfiable in $M$. 

Moreover, if $\mu$ is finitely satisfiable and $\mu \geq_{\mathbb{E}} \nu$, then $\nu$ is finitely satisfiable. 
\end{theorem}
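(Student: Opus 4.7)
The plan is to combine the approximation result of Proposition~\ref{approx:1} with the preservation lemma for $\mathbb{B}_{xy}(M)$-formulas (Lemma~\ref{pres:finsat}). Fix $\lambda \in \mathfrak{M}_{xy}(M)$ witnessing $\mu \geq_{\mathbb{E},M} \nu$ and, using Lemma~\ref{ext:exist}, pick any $\omega \in \E(\lambda,\mu)$. Let $\varphi(y) \in \mathcal{L}_{y}(\mathcal{U})$ be a formula with $\nu(\varphi(y)) > 0$; the goal is to produce $b_0 \in M^y$ such that $\mathcal{U} \models \varphi(b_0)$.

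Set $\epsilon = \nu(\varphi(y))/2 > 0$ and apply Proposition~\ref{approx:1} to this choice of $\omega$, $\varphi(y)$, and $\epsilon$. This yields $\gamma^{-}(x,y) \in \mathbb{B}_{xy}(M)$ with $\gamma^{-}(x,y) \subseteq (\varphi(y) \wedge x = x)$ and $|\omega(\gamma^{-}(x,y)) - \nu(\varphi(y))| < \epsilon$, which forces $\omega(\gamma^{-}(x,y)) > \nu(\varphi(y))/2 > 0$. Now Lemma~\ref{pres:finsat} applies to $\gamma^{-}(x,y)$ (since $\mu$ is finitely satisfiable in $M$ and $\gamma^{-} \in \mathbb{B}_{xy}(M)$), producing $(a_0,b_0) \in M^{xy}$ with $\mathcal{U} \models \gamma^{-}(a_0,b_0)$. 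Because $\gamma^{-}(x,y) \subseteq (\varphi(y) \wedge x = x)$, we obtain $\mathcal{U} \models \varphi(b_0)$, establishing finite satisfiability of $\nu$ in $M$.

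For the moreover clause, suppose $\mu$ is finitely satisfiable in some small model $M_0 \prec \mathcal{U}$ and $\mu \geq_{\mathbb{E},A} \nu$ for some small set $A$. Choose a small elementary submodel $M \prec \mathcal{U}$ containing $M_0 \cup A$. Then $\mu$ remains finitely satisfiable in $M$ (any larger set of parameters still witnesses), and by Lemma~\ref{lem:ext-para} we have $\mu \geq_{\mathbb{E},M} \nu$. The first part of the theorem then yields finite satisfiability of $\nu$ in $M$, so in particular $\nu$ is finitely satisfiable.

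The only mildly delicate step is the first one, where one must remember to pick $\epsilon$ small enough that the lower approximant $\gamma^{-}$ has positive $\omega$-mass; everything else is assembly of earlier results. No separate effort is needed to control the syntactic shape of $\gamma^{-}$, since Lemma~\ref{pres:finsat} already handles arbitrary elements of $\mathbb{B}_{xy}(M)$ via the $M$-invariance argument over a fixed $a_0 \in M^x$.
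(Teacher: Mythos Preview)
Your proof is correct and follows essentially the same route as the paper: use Proposition~\ref{approx:1} to find a lower $\mathbb{B}_{xy}(M)$-approximant of positive $\omega$-mass, then invoke Lemma~\ref{pres:finsat} to realize it in $M$. Your ``moreover'' paragraph is in fact slightly more explicit than the paper's, correctly passing to a small model $M \supseteq M_0 \cup A$ before applying Lemma~\ref{lem:ext-para}.
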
 

\begin{proof} Fix $\lambda$ in $\mathfrak{M}_{xy}(M)$ such that $\lambda$ witnesses $\mu \geq_{\mathbb{E},M} \nu$. Fix a formula $\varphi(y) \in \mathcal{L}_{y}(\mathcal{U})$ such that $\nu(\varphi(y)) > 0$. Choose $\epsilon > 0$ such that $\nu(\varphi(y)) > \epsilon$. Fix $\omega$ in $\E(\lambda,\mu)$. By Proposition \ref{approx:1}, there exists a formula $ \gamma(x,y) \in \mathbb{B}_{xy}(M)$ such that 
\begin{enumerate} 
\item  $\gamma(x,y) \subseteq (\varphi(y) \wedge x = x)$
\item $|\omega(\gamma(x,y)) - \omega(\varphi(y) \wedge x =x )| < \frac{\epsilon}{2}$. 
\end{enumerate} 
Notice that 
\begin{equation*}  \nu(\varphi(y)) = \pi_{y}(\omega)(\varphi(y)) =  \omega(\varphi(y) \wedge x = x) \approx_{\frac{\epsilon}{2}} \omega(\gamma(x,y)) \implies \omega(\gamma(x,y)) > \frac{\epsilon}{2}. 
\end{equation*} 
Since $\gamma(x,y) \in \mathbb{B}_{xy}(M)$, we can apply Lemma \ref{pres:finsat} and so there exists $(a,b) \in M^{xy}$ such that $\mathcal{U} \models \gamma(a,b)$. By construction, $\mathcal{U} \models \varphi(b)$ which completes the proof.

The moreover portion follows from the previous paragraph and an application of Lemma \ref{lem:ext-para}.
\end{proof}
\begin{definition} Let $\mu \in \mathfrak{M}_{x}(\mathcal{U})$. Then we say that $\mu$ is \textit{definable} if there exists a small subset $A$ of $\mathcal{U}$ such that $\mu$ is $A$-invariant and for any formula $\varphi(x,z) \in \mathcal{L}_{xz}(\emptyset)$ the map $F_{\mu,A}^{\varphi}: S_{z}(A) \to [0,1]$ via $F_{\mu,A}^{\varphi}(q) = \mu(\varphi(x,b))$ where $b \models q$ is continuous. 
\end{definition}
\begin{question} Suppose that $\mu \geq_{\mathbb{E},A} \nu$ and $\mu$ is $A$-definable. Does this imply that $\nu$ is $A$-definable? Even if $\mu$ or $\nu$ is a type? 
\end{question} 

\section{Separated Amalgams }\label{sec:amalgam}


In this section, we consider a variant of extension domination which we call \textit{amalgam deciphering}. Instead of requiring $\mu$ to dominate $\nu$ over the class of all extension (i.e. $\E(\lambda,\mu)$), we only require that $\mu$ dominates $\nu$ over the class of all separated amalgams, a space we call $\amal(\lambda,\mu)$. This variant is well-behaved with respect to localizations and certain kinds of convex combinations. It is thus tempting to call this property \textit{amalgam domination}. However, fundamental properties about this definition remain open and it seems more than plausible to the authors that this variant fails to be both reflexive and transitive. Therefore, we have decided to use the term \textit{deciphering} in place of \textit{domination}. 

Nevertheless, amalgam deciphering provides some new results about extension domination. We observe that $\delta_{p}$ extension dominates a measure if and only if $\delta_{p}$ amalgam deciphers a measure. From this observation, we show that extension domination is well-behaved with respect to certain convex combination if the measure on the left is a Dirac measure concentrating on a type. 

\subsection{Basic definitions and observations} The difference between extension domination and amalgam deciphering is the collection of measures over which domination occurs.  Similar to extension domination, we begin with the definition of an \textit{amalgam space}.
\begin{definition}\label{def:amal-spa} Let $\mu \in \mathfrak{M}_{x}(\mathcal{U})$ and $\lambda \in \mathfrak{M}^{\am}_{xy}(A)$ such that $\mu|_{A} = \pi_{x}(\lambda)$. We define the corresponding \textit{amalgam space} as follows
\begin{equation*}\amal(\lambda,\mu) := \{\omega \in \mathfrak{M}^{\am}_{xy}(\mathcal{U}): \omega|_{A} = \lambda, \pi_{x}(\omega) = \mu\}.
\end{equation*} 
\end{definition} 

We remark that in the previous definition, we assume that $\lambda \in \mathfrak{M}_{xy}^{\am}(A)$. If we did not, then $\amal(\lambda,\mu)$ would be empty automatically. However, we will see that this is not sufficient to guarantee that $\amal(\lambda,\mu)$ is non-empty (see Example \ref{counter:exist}). 

\begin{lemma}\label{amal:exist} Let $\mu \in \mathfrak{M}_{x}(\mathcal{U})$ and $\lambda \in \mathfrak{M}^{\am}_{xy}(A)$ such that $\mu|_{A} = \pi_{x}(\lambda)$. Then
\begin{enumerate} 
\item $\amal(\lambda,\mu) \subseteq \E(\lambda,\mu)$. 
\item $\amal(\lambda,\mu)$ is a compact Hausdorff space (under the induced topology).  
\item $\amal(\lambda, \mu)$ is convex. 
\end{enumerate} 
\end{lemma}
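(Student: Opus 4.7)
The plan is to handle the three claims in order, leveraging Lemma~\ref{ext:exist} for the underlying extension space $\E(\lambda,\mu)$ and Definition~\ref{def:amal} (together with Lemma~\ref{Amal:eq}) for the separated amalgam condition.

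For (1), the inclusion is immediate by unraveling Definitions~\ref{def:amal-spa} and the definition of $\E(\lambda,\mu)$: both spaces impose the conditions $\omega|_A = \lambda$ and $\pi_x(\omega) = \mu$, and $\amal(\lambda,\mu)$ merely adds the extra requirement that $\omega \in \mathfrak{M}^{\am}_{xy}(\mathcal{U})$. So I would just note this in one sentence.

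For (2), the idea is to realize $\amal(\lambda,\mu)$ as a closed subspace of the compact Hausdorff space $\E(\lambda,\mu)$. The Hausdorff property is inherited from $\mathfrak{M}_{xy}(\mathcal{U})$ (Fact~\ref{Basic:Keisler}(1)), and by Lemma~\ref{ext:exist}(2) we know $\E(\lambda,\mu)$ is already compact. It therefore suffices to show that the separated amalgam condition cuts out a closed subset. For each pair $(\varphi(x),\psi(y)) \in \mathcal{L}_x(\mathcal{U}) \times \mathcal{L}_y(\mathcal{U})$, the set
\begin{equation*}
C_{\varphi,\psi} := \bigl\{\omega \in \E(\lambda,\mu) : \omega(\varphi(x) \wedge \psi(y)) = \pi_x(\omega)(\varphi(x)) \cdot \pi_y(\omega)(\psi(y))\bigr\}
\end{equation*}
is closed, since by Fact~\ref{Basic:Keisler}(2) each of $\omega \mapsto \omega(\varphi \wedge \psi)$, $\omega \mapsto \pi_x(\omega)(\varphi)$, and $\omega \mapsto \pi_y(\omega)(\psi)$ is continuous (composition with the continuous projections $\pi_x$, $\pi_y$), and multiplication and subtraction on $\mathbb{R}$ are continuous. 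Then $\amal(\lambda,\mu) = \bigcap_{(\varphi,\psi)} C_{\varphi,\psi}$ is closed in $\E(\lambda,\mu)$, hence compact.

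For (3), I would take $\omega_1, \omega_2 \in \amal(\lambda,\mu)$ and $r,s \in [0,1]$ with $r+s = 1$, and verify that $r\omega_1 + s\omega_2$ still satisfies the three defining conditions. The first two conditions ($\omega|_A = \lambda$ and $\pi_x(\omega) = \mu$) follow exactly as in the proof of Lemma~\ref{ext:exist}(3), since both $r_A$ and $\pi_x$ are affine. The key step is to check that a convex combination of two separated amalgams sharing the same $x$-marginal $\mu$ remains a separated amalgam. For any $\varphi(x) \in \mathcal{L}_x(\mathcal{U})$ and $\psi(y) \in \mathcal{L}_y(\mathcal{U})$, using the separated amalgam property of each $\omega_i$ together with $\pi_x(\omega_i) = \mu$, one computes
\begin{equation*}
(r\omega_1 + s\omega_2)(\varphi(x) \wedge \psi(y)) = \mu(\varphi(x)) \cdot \bigl(r\,\pi_y(\omega_1)(\psi(y)) + s\,\pi_y(\omega_2)(\psi(y))\bigr),
\end{equation*}
and the right-hand side equals $\pi_x(r\omega_1 + s\omega_2)(\varphi(x)) \cdot \pi_y(r\omega_1 + s\omega_2)(\psi(y))$ by affineness of $\pi_x$ and $\pi_y$. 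Thus $r\omega_1 + s\omega_2 \in \amal(\lambda,\mu)$.

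No step presents a serious obstacle; the only subtlety is (3), where one must notice that the shared $x$-marginal $\mu$ is what allows the factorization to be preserved under convex combinations (the $y$-marginals of $\omega_1, \omega_2$ may differ, but this is harmless since the shared factor $\mu(\varphi(x))$ can be pulled out).
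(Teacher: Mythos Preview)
Your proposal is correct and follows essentially the same approach as the paper: (1) is immediate from the definitions; (2) is obtained by writing $\amal(\lambda,\mu)$ as the intersection over all pairs $(\varphi,\psi)$ of the closed sets where the amalgam equation holds (using continuity of evaluation maps), hence closed in the compact $\E(\lambda,\mu)$; and (3) is the same direct computation exploiting the common $x$-marginal $\mu$ to factor out $\mu(\varphi(x))$.
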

\begin{proof} First note that if $\amal(\lambda,\mu) = \emptyset$, then the statements are vaccuously true.
\begin{enumerate} 
\item Immediate by definition. 
\item By (1) and Lemma~\ref{ext:exist}, it suffices to show that $\amal(\lambda,\mu)$ is a closed subset of $\E(\lambda,\mu)$. For any $\theta(x,y) \in \mathcal{L}_{xy}(\mathcal{U})$, the map $D_{\theta}:\mathfrak{M}_{xy}(\mathcal{U}) \to [0,1]$ via $D_{\theta}(\omega) = \omega(\theta(x,y))$ is continuous. For each $\varphi(x) \in \mathcal{L}_{x}(\mathcal{U})$ and $\psi(y) \in \mathcal{L}_{y}(\mathcal{U})$, observe that
\begin{equation*} A_{\varphi,\psi} :=\{\omega\in \E(\lambda,\mu): \omega(\varphi(x)\wedge \psi(y))=\omega(\varphi(x)\wedge y=y)\omega(\psi(y)\wedge x=x)\}
\end{equation*}
\begin{equation*}
    =(D_{\varphi \wedge \psi} - D_{\varphi}\cdot D_{\psi})^{-1}(\{0\}).
\end{equation*}
Hence $A_{\varphi,\psi}$ is a closed subset of $\E(\lambda,\mu)$. Notice that $\amal(\lambda,\mu)=\bigcap_{\varphi,\psi}A_{\varphi,\psi}$, and so it is also closed. 
\item Suppose that $\omega_1, \omega_2 \in \amal(\lambda,\mu)$ and fix $r,s \in [0,1]$ such that $r + s = 1$. A straightforward computation shows $\pi_{x}(r\omega_1 + s\omega_2) = \mu$ and $(r\omega_1 + s\omega_2)|_{A} = \lambda$. It remains to show that $r\omega_1 + s\omega_2 \in \mathfrak{M}_{xy}^{\am}(\mathcal{U})$. A direct computation shows that for any $\varphi(x) \in \mathcal{L}_{x}(\mathcal{U})$ and $\psi(y) \in \mathcal{L}_{y}(\mathcal{U})$, 
\begin{equation*} 
(r\omega_1 + s\omega_2)(\varphi(x) \wedge \psi(y)) = \mu(\varphi(x)) \pi_{y}(r\omega_1 + s\omega_2)(\psi(y))
\end{equation*} 
Hence $r\omega_1 + s\omega_2$ is a separated amalgam. \qedhere
\end{enumerate} 
\end{proof}
We now provide an example where $\lambda \in \mathfrak{M}_{xy}^{\am}(A)$ and $\mu|_{A} = \pi_{x}(\lambda)$ and yet $\amal(\lambda,\mu) = \emptyset$. This is in contrast to (1) of  Lemma \ref{ext:exist}. In general, finding whether or not separated amalgams with certain properties exists (as in Lemma \ref{ext:lemmas}) is the main obstruction to the general theory of amalgam deciphering.  

\begin{definition}[Diagonal measure construction] Let $\mu \in \mathfrak{M}_{x}(\mathcal{U})$ and suppose that $|x| = |y|$. We let 
$\Delta_{xy}(\mu)$ denote the measure $\Delta(\mu)$, where $\Delta: \mathcal{U}^{x}\to \mathcal{U}^{x} \times \mathcal{U}^{y}$ via $a\mapsto (a,a)$.
\end{definition} 

\begin{example}\label{counter:exist} Let $M = (\mathbb{R};<)$ and $\mathcal{U}$ a monster model. We define the following types and measures:
\begin{enumerate} 
\item Let $p \in S_{x}(M)$ given by $\{x > a: a \in M\}$. 
\item Let $p_{+}$ be the unique global coheir of $p$ over $M$. More precisely, $p_{+}$ is finitely satisfiable in $M$ and given by $\{x > a: a \in M\} \cup \{ x < b: M < b\}$.  
\item Let $p_{+\infty}$ be the unique global heir of $p$. In particular, $p_{+\infty}$ is definable over $M$ and given by $\{x > a: a \in \mathcal{U}\}$. 
\item Let $\mu \in \mathfrak{M}_{x}(\mathcal{U})$ where $\mu := \frac{1}{2}\delta_{p_{+}} + \frac{1}{2}\delta_{p_{+\infty}}$. Notice that $\mu|_{M} = \delta_{p}$. Note that that $\mu$ is invariant over $M$, but neither definable or finitely satisfiable over $M$.
\item Let $\omega \in \mathfrak{M}_{xy}(\mathcal{U})$ where $\omega := \Delta_{xy}(\mu) =  \frac{1}{2}\Delta_{xy}(\delta_{p_{+\infty}}) + \frac{1}{2}\Delta_{xy}(\delta_{p_{+}})$. 
\begin{enumerate} 
\item Notice that $\pi_{x}(\omega) = \mu$. 
\item Notice that the formula $x = y$ is in both $\Delta_{xy}(\delta_{p_{+\infty}})$ and $\Delta_{xy}(\delta_{p_{+}})$. Hence $\omega( x=y ) = 1$. 
\item Notice that $\omega$ is not a separated amalgam. Fix $b \in \mathcal{U}\backslash M$ such that $M < b$. Consider the formula $\gamma(x,y) : =x > b \wedge y < b$. In particular, $\omega(\gamma(x,y)) = 0$ while $\pi_{x}(\omega)(x >b) \cdot  \pi_{y}(\omega)(y < b) = \frac{1}{2} \cdot \frac{1}{2} = \frac{1}{4}$. 
\end{enumerate} 
\item Consider  $\lambda := \omega|_{M}$. 
\begin{enumerate} 
\item Notice that $\pi_{x}(\lambda) = \mu|_{M}=\delta_p$ since $\pi_x(\omega)=\mu$.

\item By Lemma~\ref{type:amal}, $\lambda \in \mathfrak{M}_{xy}^{\am}(M)$, since $\pi_{x}(\lambda) = \delta_p(x)$.
\item In fact, $\lambda = \Delta_{xy}(\delta_p)$.
\item Since $\lambda = \omega|_{M}$, $\lambda( x = y) = 1$. 
\end{enumerate} 
\end{enumerate}
We now argue that $\amal(\lambda,\mu) = \emptyset$. Suppose not and let $\omega' \in \amal(\lambda,\mu)$. Fix $b \in \mathcal{U}$ such that $b > M$. Notice that $ x = y \subseteq [x \leq b \vee y \geq b]$.  Since $\omega'$ is a separated amalgam, 
\begin{equation*} 
1 = \lambda(x = y) = \omega'(x = y) \leq  \omega'(x \leq b \vee y \geq b) = 1 - \omega'( x > b \wedge y< b)
\end{equation*} 
\begin{equation*} 
= 1 - \omega'(x > b \wedge y = y)\omega'(y < b \wedge x = x) = 1 - \mu(x > b)\pi_{y}(\omega')(y < b)
\end{equation*} 
\begin{equation*} 
= 1 - \frac{1}{2}\pi_{y}(\omega')(y < b) \implies \pi_{y}(\omega')(y < b) = 0. 
\end{equation*} 
On the other hand, we have the following.
\begin{equation*} 
\frac{1}{2} = \mu( x < b) = \pi_{x}(\omega')(x < b) = \omega'(x < b \wedge y= y) = \omega'(x <b \wedge x = y) 
\end{equation*} 
\begin{equation*} 
= \omega' (y < b \wedge x = y) =\omega'(y<b \wedge x = x) = \pi_{y}(\omega')(y < b). 
\end{equation*} 
Hence we have a contradiction.
\end{example} 

With the previous example in mind, we now give the definition of amalgam deciphering.

\begin{definition}\label{def:amal-deci} Let $\mu \in \mathfrak{M}_{x}(\mathcal{U})$, $\nu \in \mathfrak{M}_{y}(\mathcal{U})$ and $A$ be a small subset of $\mathcal{U}$. We say that $\mu$ \textit{amalgam deciphers $\nu$ (over $A$)} if there exists $\lambda \in \mathfrak{M}_{xy}(A)$ such that 
\begin{enumerate}
\item $\pi_{x}(\lambda) = \mu|_{A}$. 
\item $\amal(\lambda,\mu) \neq \emptyset$. 
\item For every $\omega \in \amal(\lambda,\mu)$, $\pi_{y}(\omega) = \nu$. 
\end{enumerate}
We write $\mu\blacktriangleright_{A}\nu$ to mean ``$\mu$ amalgam deciphers $\nu$ over $A$" and $\mu\blacktriangleright\nu$ to mean ``there exists some small set $A$ such that $\mu  \blacktriangleright_{A} \nu$''.
\end{definition}
The following proposition essentially demonstrates all the pure theory we know about amalgam deciphering. 

\begin{proposition}\label{Basic:Amal} Let $A \subseteq B \subseteq \mathcal{U}$, $p \in S_{x}(\mathcal{U})$, $\mu \in \mathfrak{M}_{x}(\mathcal{U})$, $\nu \in \mathfrak{M}_{y}(\mathcal{U})$. 
\begin{enumerate} 
\item If $\mu \blacktriangleright_{A} \nu$, then $\mu \blacktriangleright_{B} \nu$. 
\item $\delta_{p} \geq_{\mathbb{E},A} \mu$ if and only if $\delta_{p} \blacktriangleright_{A} \mu$. 
\item If $\mu$ is A-invariant and $\mu \blacktriangleright_{A} \nu$, then $\nu$ is $A$-invariant. 
\item If $\mu$ is smooth and $\mu \blacktriangleright \nu$, then $\nu$ is smooth. 
\item If $\nu$ is smooth over $A$, then for any $\mu \in \mathfrak{M}_{x}(\mathcal{U})$, it follows that $\mu \blacktriangleright_{A} \nu$. 
\end{enumerate} 
\end{proposition}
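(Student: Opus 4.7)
The plan is to treat the five assertions in order, leveraging the parallels with the extension-domination theory from Sections \ref{sec:extension} and \ref{sec:preserve} and isolating which arguments need genuine adaptation.

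Part (1) is the amalgam analogue of Lemma \ref{lem:ext-para}: given $\lambda$ witnessing $\mu \blacktriangleright_A \nu$, I would choose $\omega \in \amal(\lambda,\mu)$ (non-empty by the deciphering hypothesis), set $\lambda':=\omega|_B$, and verify that this new witness works, using that restrictions of separated amalgams are separated amalgams. Any $\eta \in \amal(\lambda',\mu)$ then has $\eta|_A = \lambda$ and is a separated amalgam with $\pi_x(\eta) = \mu$, so $\eta \in \amal(\lambda,\mu)$ and $\pi_y(\eta) = \nu$. Part (2) is immediate from Lemma \ref{type:amal}: whenever $\pi_x(\lambda) = \delta_p|_A$, every $\omega \in \E(\lambda,\delta_p)$ automatically has $\pi_x(\omega) = \delta_p$ and is therefore a separated amalgam, so $\E(\lambda,\delta_p) = \amal(\lambda,\delta_p)$ and the two notions coincide. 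Part (3) follows the template of Proposition \ref{pres:inv}: the only addition is to verify that the automorphism-induced map $\sigma_\ast$ from Lemma \ref{aut:affine} preserves the separated-amalgam property, an immediate computation from the definition. Since $\amal(\lambda,\mu)$ is non-empty, convex, and compact Hausdorff by Lemma \ref{amal:exist}, Schauder's fixed point theorem supplies a $\sigma_\ast$-fixed $\omega \in \amal(\lambda,\mu)$, and $\nu = \pi_y(\omega)$ inherits $A$-invariance exactly as before.

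Part (5) calls for a small twist: rather than fix $\lambda$ first and hope for an amalgam extension to exist, one should choose $\lambda$ a posteriori. Use Remark \ref{samalgam:exist} to produce some $\omega \in \mathfrak{M}^{\am}_{xy}(\mathcal{U})$ with $\pi_x(\omega) = \mu$ and $\pi_y(\omega) = \nu$, and then define $\lambda := \omega|_A$. By construction $\omega \in \amal(\lambda,\mu)$, so the amalgam space is non-empty; for any $\omega' \in \amal(\lambda,\mu)$, $\pi_y(\omega')|_A = \pi_y(\lambda) = \nu|_A$, and smoothness of $\nu$ over $A$ forces $\pi_y(\omega') = \nu$.

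The hard part will be (4). After reducing via (1) to $\mu \blacktriangleright_A \nu$ with $\mu$ smooth over $A$ and witness $\lambda$, the goal is to show that any $\nu' \in \mathfrak{M}_y(\mathcal{U})$ with $\nu'|_A = \nu|_A$ must equal $\nu$. The natural strategy is to produce a separated amalgam $\tau \in \mathfrak{M}^{\am}_{xy}(\mathcal{U})$ with $\pi_x(\tau) = \mu$, $\pi_y(\tau) = \nu'$, and $\tau|_A = \lambda$: then $\tau \in \amal(\lambda,\mu)$ and the deciphering hypothesis gives $\pi_y(\tau) = \nu$, forcing $\nu' = \nu$. Existence of some separated amalgam of $\mu$ and $\nu'$ is Remark \ref{samalgam:exist}; the obstacle is ensuring that $\tau|_A = \lambda$, since a priori many separated amalgams in $\mathfrak{M}^{\am}_{xy}(A)$ share the marginals $\mu|_A$ and $\nu|_A$ and can differ on genuinely mixed formulas in $\mathcal{L}_{xy}(A)$. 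The route I expect to take is to exploit smoothness of $\mu$ over $A$ to establish a Fubini-style uniqueness statement, asserting that the separated amalgam in $\mathfrak{M}^{\am}_{xy}(A)$ with $\pi_x$-marginal $\mu|_A$ and any prescribed $\pi_y$-marginal is uniquely determined. Granting this uniqueness, any separated amalgam of $\mu$ and $\nu'$ automatically restricts to $\lambda$, and (4) follows; proving this uniqueness claim is where the real work sits.
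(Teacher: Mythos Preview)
Your treatment of (1)--(3) and (5) coincides with the paper's. For (4) you are more careful than the paper, which simply fixes a global separated amalgam $\omega$ of $\mu$ and $\nu$ via Remark~\ref{samalgam:exist}, sets $\lambda:=\omega|_A$ with $\mu$ smooth over $A$, observes $\amal(\lambda,\mu)\ni\omega$, and asserts that ``the rest of the proof is word for word as in Proposition~\ref{smooth:pres}.'' That sketch elides the very point you isolate: Proposition~\ref{smooth:pres} invokes Lemma~\ref{ext:exist} to produce $\omega_i\in\E(\lambda,\nu_i)$, and nothing guarantees these are separated amalgams; moreover the paper's $\lambda$ is not \emph{a priori} the deciphering witness. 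Your proposed fix---prove that when $\mu$ is smooth over $A$ the separated amalgam in $\mathfrak{M}^{\am}_{xy}(A)$ with marginals $\mu|_A$ and a prescribed $\gamma$ is unique---is exactly what is needed to make either argument rigorous, and it does hold: it follows from the squeeze characterisation of smoothness (a consequence of Fact~\ref{fact:charges-r}, no NIP hypothesis required) by approximating each $\theta(x,y)\in\mathcal{L}_{xy}(A)$ by finite unions of rectangles. With that uniqueness in hand, the deciphering witness and the paper's constructed $\lambda$ coincide, and your argument for (4) closes.
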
 

\begin{proof} Most of the statements above of slight variations of the proof as in the corresponding statement for extension domination and the only additional information needed is that the amalgamation space is non-empty. Therefore, the proofs are sketched. 
\begin{enumerate}
    \item Identical to Lemma \ref{lem:ext-para}. 
    \item Immediate from Lemma \ref{type:amal}. 
    \item Word for word as in the proofs of Lemma~\ref{aut:affine} and Proposition~\ref{pres:inv}.
    \item By Remark \ref{samalgam:exist}, fix $\omega \in \mathfrak{M}^{\am}_{xy}(\mathcal{U})$ such that $\pi_{x}(\omega) = \mu$ and $\pi_{y}(\omega) = \nu$. Consider $\lambda := \omega|_{A}$ where $\mu$ is smooth over $A$. Notice that $\amal(\lambda,\mu) \neq \emptyset$ since $\omega \in \amal(\lambda,\mu)$. The rest of the proof is word for word as in Proposition \ref{smooth:pres}.
    \item By Remark \ref{samalgam:exist}, fix $\omega \in \mathfrak{M}^{\am}_{xy}(\mathcal{U})$ such that $\pi_{x}(\omega) = \mu$ and $\pi_{y}(\omega) = \nu$. We claim $\lambda := \omega|_{A}$ witnesses $\mu \blacktriangleright_{A} \nu$. Notice that $\amal(\lambda,\mu) \neq \emptyset$ since $\omega \in \amal(\lambda,\mu)$. The rest of the proof is similar to Proposition \ref{smooth:all}. \qedhere
\end{enumerate}
\end{proof}

\subsection{Localizations and convex combinations}
While the general theory of amalgam deciphering remains mysterious, as previously advertised, amalgam deciphering interacts well with localizations and convex combinations. The proof of the following result is similar to Proposition \ref{local:ext}, so we only highlight the differences.

\begin{proposition}\label{amal:local} Let $\mu \in \mathfrak{M}_{x}(\mathcal{U})$, $\nu \in \mathfrak{M}_{y}(\mathcal{U})$, and $\varphi(x,b) \in \mathcal{L}_{y}(b)$. Suppose that  $0< \nu(\varphi(y,b)) < 1$.  If $\mu \blacktriangleright_{A} \nu$, then $\mu \blacktriangleright_{Ab} \nu_{[\varphi]}$. 
\end{proposition}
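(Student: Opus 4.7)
The plan is to adapt the proof of Proposition \ref{local:ext} to the amalgam setting, with the extra bookkeeping task of ensuring that every amalgam space appearing is non-empty. I would fix $\lambda \in \mathfrak{M}^{\am}_{xy}(A)$ witnessing $\mu \blacktriangleright_{A} \nu$ and pick some $\omega \in \amal(\lambda,\mu)$, which exists by clause (2) of Definition \ref{def:amal-deci}. Setting $\lambda' := \omega|_{Ab}$ and $\Phi(x,y) := \varphi(y,b) \wedge x = x$, note that $\lambda'(\varphi(y,b)) = \pi_y(\omega)|_{Ab}(\varphi(y,b)) = \nu(\varphi(y,b)) \in (0,1)$, so both $\lambda'_{[\Phi]}$ and $\lambda'_{[\neg \Phi]}$ are defined. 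The candidate witness for $\mu \blacktriangleright_{Ab} \nu_{[\varphi]}$ will be $\lambda'_{[\Phi]} \in \mathfrak{M}_{xy}(Ab)$. A direct computation, using that $\lambda'$ is a separated amalgam (being the restriction of one), shows that for $\psi(x) \in \mathcal{L}_x(Ab)$ and $\chi(y) \in \mathcal{L}_y(Ab)$,
\[
\lambda'_{[\Phi]}(\psi(x) \wedge \chi(y)) = \pi_x(\lambda')(\psi(x)) \cdot \pi_y(\lambda')_{[\varphi]}(\chi(y)),
\]
so $\lambda'_{[\Phi]}$ is itself a separated amalgam with $\pi_{x}(\lambda'_{[\Phi]}) = \pi_x(\lambda') = \mu|_{Ab}$, settling clause (1) of Definition \ref{def:amal-deci}.

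For clause (2), the same localization computation applied to the global measure $\omega$ shows that $\omega_{[\Phi]} \in \mathfrak{M}^{\am}_{xy}(\mathcal{U})$, that $\pi_x(\omega_{[\Phi]}) = \mu$, and that $\omega_{[\Phi]}|_{Ab} = \lambda'_{[\Phi]}$, so $\omega_{[\Phi]} \in \amal(\lambda'_{[\Phi]}, \mu)$ and this space is non-empty. The identical argument puts $\omega_{[\neg \Phi]}$ in $\amal(\lambda'_{[\neg \Phi]}, \mu)$; fix some $\eta$ in the latter.

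For clause (3), I would fix an arbitrary $\omega'' \in \amal(\lambda'_{[\Phi]}, \mu)$ and show $\pi_y(\omega'') = \nu_{[\varphi]}$ by mirroring the endgame of Proposition \ref{local:ext}. Let $r := \nu(\varphi(y,b))$, $s := 1-r$, and form $\omega^{\star} := r\omega'' + s\eta$. The identity $r \lambda'_{[\Phi]} + s \lambda'_{[\neg \Phi]} = \lambda'$ is immediate from the definition of localization and gives $\omega^{\star}|_A = \lambda'|_A = \lambda$; the mixing of two separated amalgams with common $x$-marginal $\mu$ is again a separated amalgam by the product formula; and $\pi_x(\omega^{\star}) = \mu$ is automatic. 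So $\omega^{\star} \in \amal(\lambda, \mu)$, and amalgam deciphering yields $\pi_y(\omega^{\star}) = \nu$. Since $\omega''(\varphi(y,b)) = \lambda'_{[\Phi]}(\Phi) = 1$ and $\eta(\varphi(y,b)) = \lambda'_{[\neg\Phi]}(\Phi) = 0$, the Bayes-style computation from Proposition \ref{local:ext} collapses to
\[
\pi_y(\omega'')(\rho(y)) = \frac{\pi_y(\omega^{\star})(\rho(y) \wedge \varphi(y,b))}{\pi_y(\omega^{\star})(\varphi(y,b))} = \nu_{[\varphi]}(\rho(y))
\]
for every $\rho(y) \in \mathcal{L}_y(\mathcal{U})$, as required.

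The only substantive obstacle relative to Proposition \ref{local:ext} is the non-emptiness of the amalgam spaces $\amal(\lambda'_{[\Phi]}, \mu)$ and $\amal(\lambda'_{[\neg \Phi]}, \mu)$: the generic existence lemmas do not produce separated amalgams, as Example \ref{counter:exist} illustrates. The resolution is the observation made above, namely that we already have a global separated amalgam $\omega \in \amal(\lambda,\mu)$ in hand (provided by the deciphering hypothesis), and that localization at a formula visibly preserves the separated-amalgam property, so all the required elements can be manufactured from $\omega$ itself. Everything else parallels the proof of Proposition \ref{local:ext}.
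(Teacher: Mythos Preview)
Your proposal is correct and follows essentially the same route as the paper. The only cosmetic difference is that the paper first invokes Proposition~\ref{Basic:Amal}(1) to replace the witness $\lambda$ over $A$ by one over $Ab$ and then applies deciphering over $Ab$, whereas you carry out that step inline by setting $\lambda' := \omega|_{Ab}$ and in the endgame appeal to deciphering over $A$; the localizations, the non-emptiness argument via $\omega_{[\Phi]}$, the convex-combination step, and the final Bayes computation are all identical in substance.
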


\begin{proof} Let $r := \nu(\varphi(y,b))$ and $s = \nu(\neg \varphi(y,b))$. By Proposition \ref{Basic:Amal}, 
choose $\lambda \in \mathfrak{M}^{\am}_{xy}(Ab)$ such that $\lambda$ witnesses $\mu \blacktriangleright_{Ab} \nu$. Let $\Phi(x,y) := \varphi(y,b) \wedge x = x$ and  consider the measures $\lambda_{[\Phi]}$, $\lambda_{[\neg \Phi]}\in \mathfrak{M}_{xy}(Ab)$. We argue that $\lambda_{[\Phi]}$ witness $\mu \blacktriangleright_{Ab} \nu$.
\begin{enumerate} 
 \item Claim: $\pi_{x}(\lambda_{[\Phi]}) = \mu|_{Ab}$. The proof is the same as Proposition~\ref{local:ext}.
\item Claim: $\lambda_{[\Phi]} \in \mathfrak{M}_{xy}^{\am}(Ab)$. Let $\psi(x) \in \mathcal{L}_{x}(Ab)$ and $\theta(y) \in \mathcal{L}_{y}(Ab)$. Then 
 \begin{equation*}
     \lambda_{[\Phi]}(\psi(x) \wedge \theta(y)) = \frac{\lambda(\psi(x) \wedge \theta(y) \wedge \varphi(y,b))}{\lambda(\varphi(y,b) \wedge x = x)} 
 \end{equation*}
 \begin{equation*} 
 \overset{(*)}{=} \frac{\mu(\varphi(x))\nu(\theta(y) \wedge \varphi(y,b))}{\nu(\varphi(y,b))} = \mu(\varphi(x)) \cdot \nu_{[\varphi]}(\theta(y)),
\end{equation*} 
where equality $(*)$ holds since $\pi_{y}(\lambda) = \nu|_{Ab}$ and $\lambda$ is a separated amalgam. By Lemma \ref{Amal:eq}, the claim holds. 
\item Claim: $\amal(\lambda_{[\Phi]},\mu) \neq \emptyset$ and $\amal(\lambda_{[\neg \Phi]},\mu) \neq \emptyset$.  By assumption, $\amal(\lambda,\mu) \neq \emptyset$. Let $\omega \in \amal(\lambda,\mu)$ and consider $\omega_{[\Phi]}$ and $\omega_{[\neg \Phi]}$. A straightforward computation show that $\omega_{[\Phi]} \in \amal(\lambda_{[\Phi]},\mu)$ and $\omega_{[\neg \Phi]} \in \amal(\lambda_{[\neg \Phi]},\mu)$. 
\item Fix $\omega \in \amal(\lambda_{[\Phi]},\mu)$. It suffices to show that $\pi_{y}(\omega) = \nu_{[\varphi]}$. 
\item Let $\eta \in \amal(\lambda_{[\neg \Phi]},\mu)$. Consider $\omega' = r\omega + s\eta$. A straightforward computation show that $\omega'|_{A} = \lambda$ and $\pi_{x}(\omega') = \mu$.
\item Claim: $\omega' \in \mathfrak{M}^{\am}_{xy}(\mathcal{U})$. Let $\psi(x) \in \mathcal{L}_{x}(\mathcal{U})$ and $\theta(y) \in \mathcal{L}_{y}(\mathcal{U})$. Then a straightforward computation shows
\begin{equation*}
    \omega'(\psi(x) \wedge \theta(y)) 
=\mu(\psi(x))(r\pi_{y}(\omega) + s\pi_{y}(\eta))(\theta(y)). 
\end{equation*}
So the claim holds by Lemma \ref{Amal:eq}. 
\item Hence $\omega' \in \amal(\lambda,\mu)$. By amalgam deciphering, $\pi_{y}(\omega') = \nu$. A similar argument to Proposition \ref{local:ext}(6), shows $\pi_{y}(\omega) = \nu_{[\varphi]}$, completing the proof. \qedhere
\end{enumerate} 
\end{proof}

Our next goal is to show that amalgam deciphering is well-behaved with respect to a certain kind of convex combinations. We begin with a lemma and advise the reader to recall Definition \ref{support}. 

\begin{lemma}\label{lemma:cut} Suppose that $\nu_1 \in \mathfrak{M}_{y}(\mathcal{U})$ and $\nu_2 \in \mathfrak{M}_{y}(\mathcal{U})$ and $\supp(\nu_1) \cap \supp(\nu_2) = \emptyset$. Then there exists a formula $\gamma(y) \in \mathcal{L}_{y}(\mathcal{U})$ such that $\supp(\nu_1) \subseteq [\gamma(y)]$ and $\supp(\nu_2) \cap [\gamma(y)] = \emptyset$. 
\end{lemma}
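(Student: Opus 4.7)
The plan is to use the fact that $S_{y}(\mathcal{U})$ is a Stone space whose clopen sets are precisely those of the form $[\varphi(y)]$, so that two disjoint closed subsets can always be separated by a clopen set. By Definition \ref{support}, both $\supp(\nu_1)$ and $\supp(\nu_2)$ are closed subsets of $S_{y}(\mathcal{U})$, and they are disjoint by assumption. Since $S_{y}(\mathcal{U})$ is compact Hausdorff, both sets are themselves compact. I will carry out the separation explicitly via a two-stage compactness argument.

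First, fix an arbitrary $q \in \supp(\nu_2)$. For each $p \in \supp(\nu_1)$, since $p \neq q$ there exists a formula $\varphi_{p}(y) \in \mathcal{L}_{y}(\mathcal{U})$ with $\varphi_{p}(y) \in p$ and $\neg \varphi_{p}(y) \in q$. The clopen sets $\{[\varphi_{p}] : p \in \supp(\nu_1)\}$ form an open cover of the compact set $\supp(\nu_1)$, so finitely many, say $[\varphi_{p_1}], \ldots, [\varphi_{p_n}]$, already cover it. Setting
\begin{equation*}
\gamma_{q}(y) := \bigvee_{i=1}^{n} \varphi_{p_i}(y),
\end{equation*}
we obtain $\supp(\nu_1) \subseteq [\gamma_{q}(y)]$ while $\neg \gamma_{q}(y) \in q$, so $q \in [\neg \gamma_{q}(y)]$.

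Second, as $q$ varies over $\supp(\nu_2)$, the collection $\{[\neg \gamma_{q}(y)] : q \in \supp(\nu_2)\}$ is an open cover of the compact set $\supp(\nu_2)$; extract a finite subcover $[\neg \gamma_{q_1}(y)], \ldots, [\neg \gamma_{q_m}(y)]$. Then take
\begin{equation*}
\gamma(y) := \bigwedge_{j=1}^{m} \gamma_{q_j}(y).
\end{equation*}
Every $p \in \supp(\nu_1)$ lies in each $[\gamma_{q_j}(y)]$, hence in $[\gamma(y)]$. Conversely, every $q \in \supp(\nu_2)$ satisfies $\neg \gamma_{q_j}(y) \in q$ for some $j$, so $\neg \gamma(y) \in q$ and therefore $q \notin [\gamma(y)]$.

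There is no substantive obstacle here — the argument is simply the standard normality of Stone spaces packaged in model-theoretic language; the only thing to be careful about is iterating compactness in the correct order (collapse one support with a disjunction, the other with a conjunction).
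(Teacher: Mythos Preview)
Your proof is correct and follows essentially the same idea as the paper: separate two disjoint closed subsets of the Stone space $S_{y}(\mathcal{U})$ by a clopen set using compactness. The only difference is economy: the paper observes directly that $S_{y}(\mathcal{U}) \setminus \supp(\nu_2)$ is open, writes it as a union of basic clopens, and applies a single compactness argument to $\supp(\nu_1)$, whereas you carry out the two-stage normality argument from scratch; both yield the desired $\gamma(y)$.
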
 

\begin{proof} Since $\supp(\nu_2)$ is closed, we have that $S_{y}(\mathcal{U}) \backslash \supp(\nu_2)$ is an open subset of $S_{y}(\mathcal{U})$. Then $S_{y}(\mathcal{U}) \backslash \supp(\nu_2) = \bigcup_{i \in I} [\gamma_i(y)]$ and since $\supp(\nu_1) \subseteq S_{y}(\mathcal{U}) \backslash \supp(\nu_2)$, this collection of basic clopens is an open cover of $\supp(\nu_1)$. We have that $\supp(\nu_1)$ is compact and so there is a finite subcover, say $\bigcup_{i=1}^{n} [\gamma_i(y)]$. Setting $\gamma : = \bigvee_{i=1}^n \gamma_i(y)$ concludes the lemma. 
\end{proof} 

\begin{proposition}\label{prop:conv} Suppose that $\mu \blacktriangleright_{A} \nu_1$ and $\mu \blacktriangleright_{A} \nu_2$ such that $\supp(\nu_1) \cap \supp(\nu_2) = \emptyset$. Then there exists a finite set $b$ such that for any $r, s \in [0,1]$ with $r+s =1$, we have that $\mu \blacktriangleright_{Ab} r\nu_1 + s\nu_2$. 
\end{proposition}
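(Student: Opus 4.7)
The plan is to first use Lemma~\ref{lemma:cut} to extract a formula $\gamma(y)$ separating the two supports, say with $\nu_1(\gamma(y))=1$ and $\nu_2(\gamma(y))=0$; the finite tuple of parameters appearing in $\gamma$ will be the required $b$. The deciphering hypotheses give witnesses $\lambda_1,\lambda_2\in\mathfrak{M}_{xy}^{\am}(A)$, and since each amalgam space is assumed non-empty, I can pick $\omega_i\in\amal(\lambda_i,\mu)$ and pass to the refined witnesses $\lambda_i':=\omega_i|_{Ab}\in\mathfrak{M}_{xy}^{\am}(Ab)$. Each $\lambda_i'$ still amalgam deciphers $\nu_i$ over $Ab$: any $\omega\in\amal(\lambda_i',\mu)$ restricts over $A$ to $\lambda_i$ and therefore lies in $\amal(\lambda_i,\mu)$, so $\pi_y(\omega)=\nu_i$.

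The candidate witness for $\mu\blacktriangleright_{Ab}r\nu_1+s\nu_2$ is the convex combination $\lambda:=r\lambda_1'+s\lambda_2'$. A direct factorization check (mirroring the proof of Lemma~\ref{amal:exist}(3)) shows $\lambda\in\mathfrak{M}_{xy}^{\am}(Ab)$ with $\pi_x(\lambda)=\mu|_{Ab}$, and $\amal(\lambda,\mu)$ is nonempty because $r\omega_1+s\omega_2$ belongs to it.

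The main work is to verify that every $\omega'\in\amal(\lambda,\mu)$ satisfies $\pi_y(\omega')=r\nu_1+s\nu_2$. Setting $\Phi(x,y):=\gamma(y)\wedge x=x$, the key input is that $\pi_y(\lambda_1')=\nu_1|_{Ab}$ forces $\lambda_1'(\Phi)=1$ and symmetrically $\lambda_2'(\Phi)=0$, whence $\omega'(\Phi)=\lambda(\Phi)=r$. The degenerate cases $r\in\{0,1\}$ reduce immediately to amalgam deciphering by $\lambda_2'$ or $\lambda_1'$. For $r,s\in(0,1)$, I would localize and form $\omega'_{[\Phi]}$ and $\omega'_{[\neg\Phi]}$; because $\omega'$ is already a separated amalgam and $\Phi$ is essentially a formula in $y$, the localization argument from the proof of Proposition~\ref{amal:local} shows that both remain separated amalgams with $x$-projection $\mu$.

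To close the loop, I would verify $\omega'_{[\Phi]}|_{Ab}=\lambda_1'$ and $\omega'_{[\neg\Phi]}|_{Ab}=\lambda_2'$: since $\lambda=r\lambda_1'+s\lambda_2'$ with each $\lambda_i'$ concentrated on its half (of $\Phi$ versus $\neg\Phi$), localizing $\lambda$ at $\Phi$ cleanly recovers $\lambda_1'$, and similarly on the other side. Amalgam deciphering of each $\lambda_i'$ then delivers $\pi_y(\omega'_{[\Phi]})=\nu_1$ and $\pi_y(\omega'_{[\neg\Phi]})=\nu_2$, and since the tautological split $\omega'=r\omega'_{[\Phi]}+s\omega'_{[\neg\Phi]}$ holds (as $\omega'(\Phi)=r$), projecting to $y$ yields the desired equality. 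The main obstacle is the bookkeeping required to check that localization respects both the separated amalgam condition and the restriction to $Ab$; however this parallels the calculations in Proposition~\ref{amal:local} and relies essentially on the support-separating choice of $\gamma$.
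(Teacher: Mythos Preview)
Your proposal is correct and follows essentially the same route as the paper: choose a support-separating formula $\gamma(y)$ to obtain $b$, lift the witnesses to $\mathfrak{M}_{xy}^{\am}(Ab)$ (the paper does this by invoking Proposition~\ref{Basic:Amal}(1) rather than explicitly restricting an $\omega_i$, but it is the same), take $\lambda=r\lambda_1'+s\lambda_2'$ as the witness, and for any $\omega'\in\amal(\lambda,\mu)$ localize at $\Phi$ and $\neg\Phi$ to land in $\amal(\lambda_1',\mu)$ and $\amal(\lambda_2',\mu)$ respectively. Your explicit handling of the degenerate cases $r\in\{0,1\}$ is a small improvement in clarity over the paper, which tacitly assumes $r,s>0$ when forming the localizations.
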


\begin{proof}
Assume the conditions above. By Lemma \ref{lemma:cut}, there exists $\varphi(y,b) \in \mathcal{L}_{y}(\mathcal{U})$ such that $\supp(\nu_1) \subseteq [\varphi(x,b)]$ and $[\varphi(x,b)] \cap \supp(\nu_2) = \emptyset$. By Proposition \ref{Basic:Amal}, $ \mu \blacktriangleright_{Ab} \nu_i$.  Let  $\lambda_i \in \mathfrak{M}_{xy}^{\am}(Ab)$ witness $\mu \blacktriangleright_{Ab} \nu_i$ for $i \in \{1,2\}$. Consider the measure $\lambda := r\lambda_1 + s\lambda_2$. We claim that $\lambda$ witnesses $\mu \geq_{Ab} r\nu_1 + s\nu_2$.

\begin{enumerate} 
\item Claim: $\pi_{x}(\lambda) = \mu|_{Ab}$. Notice that 
\begin{equation*} 
\pi_{x}(\lambda) = \pi_{x}(r\lambda_1 + s\lambda_2) = r\pi_{x}(\lambda_1)+s\pi_{x}(\lambda_2) = r\mu|_{Ab} + s\mu|_{Ab} = \mu|_{Ab}.
\end{equation*} 
\item Claim: $\amal(\lambda,\mu) \neq \emptyset$. By assumption, $\amal(\lambda_1,\mu), \amal(\lambda_2,\mu)\neq \emptyset$. Take $\omega_i \in \amal(\lambda_i,\mu)$. It follows immediately $r\omega_1 + s\omega_2 \in \amal(\lambda,\mu)$. 
\item Fix $\omega \in \amal(\lambda,\mu)$. It suffices to show that $\pi_{y}(\omega) = \nu$. Consider the formula $\Phi(x,y) := \varphi(y,b) \wedge x = x$. 
\item Claim: $\omega_{[\Phi]} \in \amal(\lambda_{1},\mu)$. 
\begin{enumerate}[($i$)] 
\item Subclaim: $\pi_{x}(\omega_{[\Phi]}) = \mu$. Let $\psi(x) \in \mathcal{L}_{x}(\mathcal{U})$ and notice 
\begin{equation*} \pi_{x}(\omega_{[\Phi]})(\psi(x)) =\omega_{[\Phi]}(\psi(x) \wedge y=y) =  \frac{\omega(\psi(x) \wedge \varphi(y,b))}{\omega(\varphi(y,b) \wedge x = x)} 
\end{equation*}
\begin{equation*}
    \overset{(*)}{=} \frac{\pi_{x}(\omega)(\psi(x)) \cdot \pi_{y}(\omega)(\varphi(y,b))}{\pi_{y}(\omega)(\varphi(y,b))} = \pi_{x}(\omega)(\psi(x)) = \mu(\psi(x)). 
\end{equation*} 
The equation $(*)$ follows from the fact that $\omega \in \mathfrak{M}_{x}^{\am}(\mathcal{U})$. 
\item Subclaim: $\omega_{[\Phi]}|_{Ab} = \lambda_1$. Fix $\theta(x,y) \in \mathcal{L}_{xy}(Ab)$. Now consider 
\begin{equation*} 
\omega_{[\Phi]}(\theta(x,y)) =   \frac{\omega(\theta(x,y) \wedge \varphi(y,b))}{\omega(\varphi(y,b) \wedge x = x)} 
\end{equation*} 
\begin{equation*}
\overset{(a)}{=} \frac{(r\lambda_1 + s\lambda_2)(\theta(x,y) \wedge \varphi(y,b))}{(r\lambda_1 + s\lambda_2)(\varphi(y,b) \wedge x= x)}
\end{equation*} 

\begin{equation*} 
=\frac{r\lambda_1(\theta(x,y) \wedge \varphi(y,b)) + s\lambda_2(\theta(x,y) \wedge \varphi(y,b))}{r\lambda_1(\ \varphi(y,b) \wedge x = x) + s\lambda_2(\ \varphi(y,b) \wedge x = x)}
\end{equation*} 
\begin{equation*} 
\overset{(b)}{=} \frac{r\lambda_1(\theta(x,y) \wedge \varphi(y,b))}{r\lambda_1(\varphi(y,b)\wedge x = x)} \overset{(c)}{=} \lambda_1(\theta(x,y)). 
\end{equation*} 
Equation $(a)$ is justified since $\theta(x,y) \wedge \varphi(y,b) \in \mathcal{L}_{y}(Ab)$ and $\omega|_{Ab} = \lambda$. Equation $(b)$ is justified since $\lambda_{2}(\varphi(y,b) \wedge x = x) = \nu_2(\varphi(y,b)) = 0$. Equation $(c)$ is justified since $\lambda_1(\varphi(y,b)\wedge x = x) = \nu(\varphi(y,b)) = 1$. 
\item Subclaim: $\omega_{[\Phi]} \in \mathfrak{M}_{xy}^{\amal}(\mathcal{U})$. Let $\chi(x) \in \mathcal{L}_{x}(\mathcal{U})$ and $\psi(y)) \in \mathcal{L}_{x}(\mathcal{U})$. Then 
\begin{equation*}
    \omega_{[\Phi]}(\chi(x) \wedge \psi(y)) = \frac{\omega(\chi(x) \wedge \psi(y) \wedge \varphi(y,b))}{\omega(\varphi(y,b) \wedge x=x)}
\end{equation*}
\begin{equation*}
    = \pi_{x}(\omega)(\chi(x)) \cdot \frac{\pi_{y}(\omega)(\psi(y)\wedge \varphi(y,b))}{\pi_{y}(\omega)(\varphi(y,b))}  
\end{equation*}
\begin{equation*}
    = \pi_{x}(\omega)(\chi(x)) \cdot (\pi_{y}(\omega))_{[\varphi]}(\psi(y))
\end{equation*}
By Lemma \ref{Amal:eq}, the subclaim holds. 
\end{enumerate} 
\item Claim: $\omega_{[\neg \Phi]} \in \amal(\lambda_2, \mu)$. The proof is similar to (4). 
\item By amalgam deciphering, $\pi_{y}(\omega_{[\Phi]}) = \nu_1$ and $\pi_{y}(\omega_{[\neg\Phi]}) = \nu_2$. 
\item Claim: $\pi_{y}(\omega) = r\pi_{y}(\omega_{[\Phi]}) + s\pi_{y}(\omega_{[\neg \Phi]})$. Fix $\rho(y) \in \mathcal{L}_{y}(\mathcal{U})$. Notice 
\begin{equation*}
\pi_{y}(\omega)(\rho(y)) = \omega(\rho(y) \wedge x = x) 
\end{equation*} 
\begin{equation*} 
= \omega(\rho(y) \wedge \varphi(y,b) \wedge x = x) + \omega(\rho(y) \wedge \neg \varphi(y,b) \wedge x = x)
\end{equation*} 
\begin{equation*} 
= r\omega_{[\Phi]}(\rho(y) \wedge x = x) + s\omega_{[\neg \Phi]}(\rho(y) \wedge x = x). 
\end{equation*} 
\begin{equation*} 
= r\pi_{y}(\omega_{[\Phi]})(\rho(y)) + s\pi_{y}(\omega_{[\neg \Phi]})(\rho(y)). 
\end{equation*} 
\item We finally show that $\pi_{y}(\omega) = r\nu_1 + s\nu_2$. Notice 
\begin{equation*}
    \pi_{y}(\omega) =  r\pi_{y}(\omega_{[\Phi]}) + s\pi_{y}(\omega_{[\neg\Phi]}) = r\nu_1 + s \nu_2. 
\end{equation*}
where the second equation follows from $(6)$. \qedhere 
\end{enumerate} 
\end{proof}

We can derive the following results about extension domination from the previous two results. 

\begin{corollary}\label{Ex:type} Let $r,s \in [0,1]$ such that $r + s = 1$. Let $p \in S_{x}(\mathcal{U})$, and $\nu_1,\nu_2 \in \mathfrak{M}_{y}(\mathcal{U})$ such that $\supp(\nu_1) \cap \supp(\nu_2) = \emptyset$. Suppose that $\delta_{p} \geq_{\mathbb{E},A} \nu_1$ and $\delta_{p} \geq_{\mathbb{E},A} \nu_2$. Then $\delta_{p} \geq_{\mathbb{E},Ab} r\nu_1 + s\nu_2$ for some finite set $b$.  

In particular, if $p\geq_{D,A} q_i$ for $i=1,\ldots,n$, then there is a finite set $b$ such that $\delta_p\geq_{\mathbb{E},Ab} \sum_{i=1}^{n} r_i\delta_{q_i}$ for $r_i\geq 0$ and $\sum_{i=1}^{n} r_i=1$.

\end{corollary}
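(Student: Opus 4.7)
The first assertion is a direct application of Proposition \ref{Basic:Amal}(2) combined with Proposition \ref{prop:conv}. Under the hypothesis $\delta_{p} \geq_{\mathbb{E},A} \nu_{i}$ for $i \in \{1,2\}$, Proposition \ref{Basic:Amal}(2) gives $\delta_{p} \blacktriangleright_{A} \nu_{i}$. Since $\supp(\nu_{1}) \cap \supp(\nu_{2}) = \emptyset$, Proposition \ref{prop:conv} yields a finite set $b$ such that $\delta_{p} \blacktriangleright_{Ab} r\nu_{1} + s\nu_{2}$. Applying Proposition \ref{Basic:Amal}(2) once more converts this back to $\delta_{p} \geq_{\mathbb{E},Ab} r\nu_{1} + s\nu_{2}$.

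For the ``in particular'' clause, my plan is to first apply Proposition \ref{dom:exdom} to replace each assumption $p \geq_{D,A} q_{i}$ by $\delta_{p} \geq_{\mathbb{E},A} \delta_{q_{i}}$, and then induct on $n$. Since $\supp(\delta_{q})=\{q\}$, after combining repeated coefficients we may assume the $q_{i}$ are pairwise distinct. The base cases $n=1$ and $n=2$ are immediate (for $n=2$ note that disjointness of the singleton supports just amounts to $q_{1} \neq q_{2}$, and the first part of the corollary applies).

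For the inductive step, assume the result for $n-1$. If $r_{n} \in \{0,1\}$ the step is trivial, so suppose $0 < r_{n} < 1$ and set
\begin{equation*}
\mu_{0} := \sum_{i=1}^{n-1} \frac{r_{i}}{1-r_{n}} \delta_{q_{i}},
\end{equation*}
which is a probability measure. By the inductive hypothesis there is a finite $b_{0}$ with $\delta_{p} \geq_{\mathbb{E}, Ab_{0}} \mu_{0}$. We also have $\delta_{p} \geq_{\mathbb{E}, A} \delta_{q_{n}}$, so by Lemma \ref{lem:ext-para} both hold over $Ab_{0}$. Since $\supp(\mu_{0}) \subseteq \{q_{1}, \ldots, q_{n-1}\}$ is disjoint from $\supp(\delta_{q_{n}}) = \{q_{n}\}$, the first part of the corollary (applied with parameter set $Ab_{0}$) produces a finite $b_{1}$ with
\begin{equation*}
\delta_{p} \geq_{\mathbb{E}, Ab_{0}b_{1}} (1-r_{n})\mu_{0} + r_{n}\delta_{q_{n}} = \sum_{i=1}^{n} r_{i} \delta_{q_{i}},
\end{equation*}
closing the induction with $b := b_{0}b_{1}$.

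The only thing requiring care is that the parameter set grows at each inductive step; however, since each step adjoins only finitely many parameters (guaranteed by Proposition \ref{prop:conv}), the resulting $b$ is still finite, which is all that the statement requires. I do not anticipate any serious obstacle, as the heavy lifting is already done in Propositions \ref{Basic:Amal}, \ref{prop:conv}, and \ref{dom:exdom}.
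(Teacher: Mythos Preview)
Your proof is correct and follows essentially the same approach as the paper. For the first part, the paper explicitly invokes Lemma~\ref{lemma:cut} and Lemma~\ref{lem:ext-para} before applying Proposition~\ref{prop:conv}, but since those steps are already built into the proof of Proposition~\ref{prop:conv}, your streamlined version is equivalent; for the ``in particular'' clause the paper simply says it follows immediately from the first part and Proposition~\ref{dom:exdom}, and your induction on $n$ is exactly the natural way to spell out that immediate consequence.
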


\begin{proof} 
By Lemma~\ref{lemma:cut} there exists $\varphi(y,b)$ such that $\nu_1(\varphi(y,b)) = 1$ and $\nu_2(\varphi(y,b)) = 0$. By Lemma~\ref{lem:ext-para}, $\delta_{p} \geq_{\mathbb{E},Ab} \nu_1$ and $\delta_{p} \geq_{\mathbb{E},Ab} \nu_2$. By Lemma~~\ref{Basic:Amal}, $\delta_{p} \blacktriangleright_{Ab} \nu_1$ and $\delta_{p} \blacktriangleright_{Ab} \nu_2$. By Proposition \ref{prop:conv}, $\delta_{p} \blacktriangleright_{Ab} r\nu_1 + s\nu_2$. Again, by Lemma~~\ref{Basic:Amal}, $\delta_{p} \geq_{\mathbb{E},Ab} r\nu_1 + s\nu_2$.

The \textit{in particular} portion follows immediately from the previous paragraph and Proposition \ref{dom:exdom}. 
\end{proof}

\section{Examples} 

We construct some explicit examples of extension domination. These examples demonstrate that extension domination is a non-trivial relation. More importantly, these computations demonstrate how to effectively show when one measure extension dominates another. Our first examples occur in Presburger arithmetic. There, we use a restriction of the Morley product to construct a witness for extension domination. Later, we will work in the Fr\"{a}ss\'{e} limit of ordered equivalence relations. For the measures we consider here, the restriction of the Morley product does not witness extension domination. However, a witness does exist and is constructed by considering \textit{smooth extensions}. 

We begin with a lemma which make our calculations easier. The proof is a straightforward computation and left to the readers.

\begin{lemma}\label{intersection} Let $\mu,\nu \in \mathfrak{M}_{x}(\mathcal{U})$. If $\mu$ and $\nu$ agree on a basis i.e. a collection of definable sets (in variable $x$) $\mathcal{B}$ closed under finite intersections such that any definable subset $X$ (in variable $x$) is a finite union of sets in $\mathcal{B}$, then $\mu = \nu$. 
\end{lemma}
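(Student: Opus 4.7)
The plan is to reduce the equality $\mu = \nu$ on all of $\mathcal{L}_{x}(\mathcal{U})$ to the assumed agreement on $\mathcal{B}$ using finite additivity. Fix an arbitrary $\varphi(x) \in \mathcal{L}_{x}(\mathcal{U})$ and let $X$ denote the associated definable set. By hypothesis there exist $B_{1},\dots,B_{n} \in \mathcal{B}$ with $X = B_{1} \cup \dots \cup B_{n}$. The goal is then to compute $\mu(X)$ and $\nu(X)$ in a way that depends only on the values of $\mu$ and $\nu$ on finite intersections of the $B_{i}$'s.

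First I would invoke the inclusion-exclusion formula for finitely additive probability measures, which is a purely algebraic consequence of axioms (2) and (3) in the definition of a Keisler measure (and can be proved by induction on $n$). This gives
\begin{equation*}
\mu(X) \;=\; \sum_{\emptyset \neq S \subseteq \{1,\dots,n\}} (-1)^{|S|+1}\, \mu\!\left(\bigcap_{i \in S} B_{i}\right),
\end{equation*}
and analogously for $\nu$. Since $\mathcal{B}$ is closed under finite intersections, each $\bigcap_{i \in S} B_{i}$ lies in $\mathcal{B}$, and therefore $\mu$ and $\nu$ take the same value on every such intersection by hypothesis. Comparing the two inclusion-exclusion expansions term by term yields $\mu(X) = \nu(X)$. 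Since $\varphi$ was arbitrary, $\mu = \nu$.

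There is no real obstacle here; the only thing to verify carefully is the inclusion-exclusion identity in the finitely additive setting, which follows from a routine induction using $\mu(A \cup B) = \mu(A) + \mu(B) - \mu(A \cap B)$ together with distributivity of intersections over unions in the Boolean algebra $\mathcal{L}_{x}(\mathcal{U})$. The closure of $\mathcal{B}$ under finite intersections is used in exactly one place, namely to guarantee that every term on the right-hand side of the expansion is an element on which $\mu$ and $\nu$ are known to agree.
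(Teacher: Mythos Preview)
Your proof is correct and is precisely the ``straightforward computation'' the paper has in mind; indeed the paper leaves this lemma to the reader, and inclusion--exclusion together with closure of $\mathcal{B}$ under finite intersections is the intended argument.
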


We also recall the definition of the \textit{Morley product} for invariant measures. This is well-defined in the NIP setting since invariance is equivalent to Borel-definable.  
\begin{definition} Suppose that $T$ is an NIP theory, $M \models T$, $\mathcal{U}$ is a monster model of $T$, $\mu \in \mathfrak{M}_{x}(\mathcal{U})$, $\nu \in \mathfrak{M}_{y}(\mathcal{U})$, and $\mu$ is invariant over a small submodel $M$. Then we can define a measure $\mu \otimes \nu$ in $\mathfrak{M}_{xy}$, known as the Morley product, where for any formula $\varphi(x,y;z) \in \mathcal{L}_{xy,z}(\emptyset)$, we have that 
\begin{equation*}
    \mu \otimes \nu(\varphi(x,y,b)) = \int_{S_{x}(N)} F_{\mu,N}^{\varphi}d(\widehat{\nu|_{N}}). 
\end{equation*}
where
\begin{enumerate}
    \item $N$ is a small model containing $Mb$. 
    \item $F_{\mu,N}^{\varphi}(q) = \mu(\varphi(x,c,b))$ where $c \in \mathcal{U}$ and $c \models q$.
    \item $\widehat{\nu|_{N}}$ is the unique regular Borel probability measure on $S_{y}(N)$ which extends $\nu|_{N}$ on definable sets (See Fact~\ref{Basic:Keisler}(3)). We will simply write $\widehat{\nu|_{N}}$ simply as $\nu|_{N}$. 
\end{enumerate}
\end{definition}



\subsection{Presburger Arithmetic} 
Let $M = (\mathbb{Z};+,<,0)$ and $\mathcal{U}$ be a monster model of $\mathcal{U}$. We recall that this theory is NIP. We give some examples of extension domination in this structure. 

\begin{proposition}\label{Z:ex} Let $\mu \in \mathfrak{M}_{x}(\mathcal{U})$ and $\nu \in \mathfrak{M}_{y}(\mathcal{U})$.
\begin{enumerate}
    \item Suppose that for every $b \in \mathcal{U}$, $\mu(x > b) = \nu(x > b) = 1$. Then $\mu \geq_{\mathbb{E},\mathbb{Z}} \nu$.
    \item Suppose that for every $b \in \mathcal{U}$, $\mu(x < b) = \nu(x < b) = 1$. Then $\mu \geq_{\mathbb{E},\mathbb{Z}} \nu$.
\end{enumerate}
\end{proposition}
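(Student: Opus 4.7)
The plan is to build a witness $\lambda \in \mathfrak{M}_{xy}(\mathbb{Z})$ with $\pi_x(\lambda) = \mu|_{\mathbb{Z}}$, $\pi_y(\lambda) = \nu|_{\mathbb{Z}}$, and the crucial condition $\lambda(y > x) = 1$. Since $y > x$ is a parameter-free formula, any $\omega \in \E(\lambda, \mu)$ inherits $\omega(y > x) = \lambda(y > x) = 1$; combined with $\omega(x > b) = \mu(x > b) = 1$, this forces $\pi_y(\omega)(y > b) = 1$ for every $b \in \mathcal{U}$. The remainder of the argument will use Presburger quantifier elimination to reduce comparing $\pi_y(\omega)$ with $\nu$ on formulas over $\mathcal{U}$ to comparing them on formulas over $\mathbb{Z}$, where they automatically agree since $\pi_y(\omega)|_{\mathbb{Z}} = \pi_y(\omega|_{\mathbb{Z}}) = \pi_y(\lambda) = \nu|_{\mathbb{Z}}$.

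To construct $\lambda$, I would start with the separated amalgam $\lambda_0$ of $\mu|_{\mathbb{Z}}$ and $\nu|_{\mathbb{Z}}$ on the Boolean subalgebra of $\mathcal{L}_{xy}(\mathbb{Z})$ generated by rectangles $\varphi(x) \wedge \psi(y)$ (Remark~\ref{samalgam:exist}). For any such rectangle contained in $\{y \leq x\}$: if $\varphi(\mathbb{Z}) = \emptyset$ then $\varphi \equiv \bot$ by completeness of Presburger, so $\mu|_{\mathbb{Z}}(\varphi) = 0$; otherwise, picking any $a_0 \in \varphi(\mathbb{Z})$ forces $\psi \subseteq \{y \leq a_0\}$, hence $\nu|_{\mathbb{Z}}(\psi) \leq \nu|_{\mathbb{Z}}(y \leq a_0) = 0$. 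Either way $\lambda_0(\varphi \wedge \psi) = 0$, and subadditivity gives $\lambda_0$-measure zero to every set in the rectangle subalgebra contained in $\{y \leq x\}$. Thus the supremum in Fact~\ref{fact:charges-r} is $0$, so $\lambda_0$ can be extended to a measure on a larger subalgebra with $\lambda(y \leq x) = 0$, and then extended freely to all of $\mathcal{L}_{xy}(\mathbb{Z})$.

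For the QE reduction, in the Presburger language with congruence predicates $\equiv_n$, any $\psi(y, c) \in \mathcal{L}_y(\mathcal{U})$ is a Boolean combination of atoms of the form $\alpha y + \beta \cdot c + \gamma = 0$, $\alpha y + \beta \cdot c + \gamma > 0$, and $\alpha y + \beta \cdot c + \gamma \equiv_n 0$. Setting $M(c) := 1 + \sum_j |\beta_j c_j + \gamma_j| \in \mathcal{U}$, each atom simplifies on $\{y > M(c)\}$ to either a fixed truth value (the $=$ and $>$ atoms with $\alpha \neq 0$), a condition $\alpha y \equiv_n r$ with $r = (-\beta \cdot c - \gamma) \bmod n$ an integer determined by $\tp(c/\mathbb{Z})$ (the $\equiv_n$ atoms), or a $y$-free formula (when $\alpha = 0$). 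Hence $\psi(y, c)$ is equivalent on $\{y > M(c)\}$ to some $\psi^*_c \in \mathcal{L}_y(\mathbb{Z})$. For $\omega \in \E(\lambda, \mu)$, both $\pi_y(\omega)$ and $\nu$ put mass $1$ on $\{y > M(c)\}$, so $\pi_y(\omega)(\psi(y, c)) = \pi_y(\omega)(\psi^*_c) = \pi_y(\lambda)(\psi^*_c) = \nu|_{\mathbb{Z}}(\psi^*_c) = \nu(\psi(y, c))$, giving $\pi_y(\omega) = \nu$. The main obstacle is the uniform quantifier-elimination bookkeeping: identifying the cutoff $M(c)$ inside $\mathcal{U}$ and handling each atomic shape in turn; once this is done, everything else reduces to routine manipulations of finitely additive measures via Fact~\ref{fact:charges-r} and the definition of extension domination. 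Part (2) follows by the symmetric argument with $\lambda(x > y) = 1$ in place of $\lambda(y > x) = 1$.
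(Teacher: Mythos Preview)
Your proof is correct and follows the same overall architecture as the paper's: build a witness $\lambda \in \mathfrak{M}_{xy}(\mathbb{Z})$ with $\lambda(y>x)=1$, deduce $\pi_y(\omega)(y>b)=1$ for every $b \in \mathcal{U}$ and every $\omega \in \E(\lambda,\mu)$, and then use Presburger quantifier elimination to reduce the equality $\pi_y(\omega)=\nu$ to the automatic identity $\pi_y(\omega)|_{\mathbb{Z}}=\pi_y(\lambda)=\nu|_{\mathbb{Z}}$.

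The genuine difference is in how $\lambda$ is produced. The paper sets $\lambda := (\nu_y \otimes \mu_x)|_{\mathbb{Z}}$, the restriction of a Morley product; this requires first arguing (via an external citation) that $\nu$ is $\mathbb{Z}$-invariant so that the product is defined, and then computing $\lambda(y>x)=1$ as an integral. Your construction is more elementary and fully self-contained: you start from the separated amalgam $\lambda_0$ on the rectangle subalgebra, observe that any rectangle $\varphi(x)\wedge\psi(y)\subseteq\{y\leq x\}$ has one factor of measure zero (since a single point $a_0\in\varphi(\mathbb{Z})$ forces $\psi$ to be bounded above), and then invoke Fact~\ref{fact:charges-r} to extend with $\lambda(y\leq x)=0$. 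This sidesteps invariance, the Morley product, and the external reference entirely. The paper's route is perhaps more conceptual in the NIP setting, but yours is the cleaner argument for the bare statement. Your quantifier-elimination bookkeeping via a cutoff $M(c)$ and the paper's case analysis on a basis of intervals and congruence conditions are essentially the same computation organised differently.
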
 

\begin{proof} The proofs are similar and so we only prove $(1)$. It is clear that for each $p \in \supp(\nu)$, $p$ is $\mathbb{Z}$-invariant (even $\mathbb{Z}$-definable). It follows from~\cite[Lemma 2.10]{chernikov2022definable} that $\nu$ is $\mathbb{Z}$-invariant. Thus the Morley product $\nu\otimes \mu$ is well-defined. Hence, consider the measure $\lambda: = \nu_{y} \otimes \mu_{x}|_{\mathbb{Z}}$. Let $\varphi(x,y) := y > x$ and notice that

\begin{equation*}
    \lambda(y > x) = \nu_{y} \otimes \mu_{x}( y > x) = \int_{S_{x}(\mathbb{Z})} F_{\nu_{y},\mathbb{Z}}^{\varphi} d\mu_{x} = \int_{S_{x}(\mathbb{Z})} \mathbf{1} d\mu = 1.
\end{equation*}
The second equality follows since $F_{\nu_{y},\mathbb{Z}}^{\varphi}(q) = \nu(y > b) = 1$ where $b \models q$.

Let $\omega \in \E(\lambda,\mu)$. We now show that for every $a \in \mathcal{U}$, $\pi_{y}(\omega)(a<y) = 1$. Fix $a \in \mathcal{U}$ and consider the following computation: 
\begin{equation*}
    \pi_{y}(\omega)(a < y)) = \omega(a < y \wedge x = x) \overset{(*)}{=} \omega(a <y \wedge x < y) 
\end{equation*}
\begin{equation*}
= \omega((a < x \wedge x < y) \vee (x \leq a <  y)) \geq \omega( a < x \wedge x < y)
\end{equation*}
\begin{equation*}
     \overset{(*)}{=} \omega( a < x \wedge y = y) = \pi_{x}(\omega)( a < x) = 1. 
\end{equation*}
The $(*)$ equations are justified by the fact that $\omega( y > x) = \lambda( y > x) = 1$. 

We now argue that for every $\omega \in \E(\lambda,\mu)$, $\pi_{y}(\omega) = \nu$. For each $n \geq 2$, let $D_{n}(x) := \exists z( n\cdot z = x)$. Every definable subset of $\mathcal{U}$ is a Boolean combination of intervals and formulas of the form $D_{n}(x+k)$ where $k \leq n$. We let $I(y)$ be of the form $x <b, b < x, x = b, x \neq b$ and consider the set $\mathcal{B}$ where  
\begin{align*}
    \mathcal{B} = \Bigg\{ & \bigwedge_{j = 1}^{n} I_{j}^{\varepsilon_{1}(j)}(y) \wedge \bigwedge_{l = 1}^{m} D_{s_{l}}^{\varepsilon_{2}(l)}(y + k_l): n,m,s_{l} \in \mathbb{N}, k_l \leq s_l,
    \\ & \varepsilon_{1}: \{1,...,n\} \to \{0,1\}, \varepsilon_{2}:\{1,...,m\} \to \{0,1\} \Bigg\}. 
\end{align*}
Any definable subset of $\mathcal{U}^{y}$ can be written as a finite union of elements from $\mathcal{B}$. We now show that $\pi_{y}(\omega) = \nu$. By Lemma \ref{intersection}, it suffices to show that for any $\theta(y) \in \mathcal{B}$, $\pi_{y}(\omega)(\theta(y)) = \nu(\theta(y))$. Let $\theta(y) = \bigwedge_{j = 1}^{n} I_{j}^{\varepsilon_{1}(j)}(y) \wedge \bigwedge_{l = 1}^{m} D_{s_{l}}^{\varepsilon_{2}(l)}(y + k_l)$. Let $c \in \mathcal{U}^{y}$ such that $c$ is larger than all the parameters which occur in $\theta(y)$. Then either: 
\begin{enumerate}
    \item $[\bigwedge_{j = 1}^{n} I_{j}^{\varepsilon_{1}(j)}(y) \wedge y > c] = \emptyset$ and so $[\theta(y) \wedge y > c] = \emptyset$. In this case, we have that 
    \begin{equation*}
        \pi_{y}(\omega)(\theta(y)) \overset{(*)}{=} \pi_{y}(\omega)(\theta(y) \wedge y > c) = \pi_{y}(\omega)(\emptyset ) = 0 
    \end{equation*}
    \begin{equation*}
        = \nu(\emptyset) = \nu(\theta(y) \wedge y > c) \overset{(*)}{=} \nu(\theta(y)). 
    \end{equation*}
The equality $(*)$ follows from the fact that $\omega(y > c) =\nu(y>c) =  1$. 
    \item Or $[\bigwedge_{j = 1}^{n} I_{j}^{\varepsilon_{1}(j)}(y) \wedge y > c] = [y > c]$ and so
 \begin{equation*}
        \pi_{y}(\omega)(\theta(y)) \overset{(*)}{=} \pi_{y}(\omega)(\theta(y) \wedge y > c) = \pi_{y}(\omega)\left(\bigwedge_{l=1}^{m} D_{s_l}^{\varepsilon_{2}(l)}(y+k_l) \wedge y >c \right)
    \end{equation*}
    \begin{equation*}
        \overset{(*)}{=} \pi_{y}(\omega)\left(\bigwedge_{l=1}^{m} D_{s_l}^{\varepsilon_{2}(l)}(y+k_l) \right) \overset{(\dagger)}{=} \pi_{y}(\lambda)\left( \bigwedge_{l=1}^{m} D_{s_l}^{\varepsilon_{2}(l)}(y+k_l) \right)
    \end{equation*}
    \begin{equation*}
        =\nu\left( \bigwedge_{l=1}^{m} D_{s_l}^{\varepsilon_{2}(l)}(y+k_l) \right) \overset{(*)}{=} \nu\left( \bigwedge_{l=1}^{m} D_{s_l}^{\varepsilon_{2}(l)}(y+k_l) \wedge y > c \right) = \nu(\theta(y)). 
    \end{equation*}
    Again, equality $(*)$ follows from the fact that $\omega(y > c) =\nu(y>c) =  1$. Equality $(\dagger)$ follows from the fact that the formula $\bigwedge_{l=1}^{m} D^{\varepsilon_{2}(l)}_{s_l}(y+k_l) \in \mathcal{L}_{y}(\mathbb{Z})$. 
\end{enumerate}
Hence, we conclude that $\pi_{y}(\omega) = \nu$ and so $\mu \geq_{\mathbb{E},\mathbb{Z}} \nu$. 
\end{proof}

\begin{corollary}\label{Z:ex2} Suppose that for every $b \in \mathcal{U}$, $\mu (x > b) = \nu(x < b) = 1$. Then \begin{enumerate}
    \item $\mu \geq_{\mathbb{E},\mathbb{Z}} \nu$.
    \item $\nu \geq_{\mathbb{E},\mathbb{Z}} \mu$. 
\end{enumerate}
\end{corollary}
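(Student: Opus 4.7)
The plan is to reduce both statements to Proposition~\ref{Z:ex} by passing through the $\emptyset$-definable involution $f\colon \mathcal{U}\to\mathcal{U}$ sending $a\mapsto -a$ (the map $-x$ is $\emptyset$-definable in Presburger arithmetic as the unique $z$ with $x+z=0$). The key observation is that $f$ swaps ``concentrated at $+\infty$" with ``concentrated at $-\infty$": for any $b\in\mathcal{U}$,
\begin{equation*}
f(\mu)(x<b)=\mu(-x<b)=\mu(x>-b)=1,
\end{equation*}
and analogously $f(\nu)(x>b)=\nu(x<-b)=1$. Hence $f(\mu)$ satisfies the hypothesis of Proposition~\ref{Z:ex}(2) (paired with $\nu$), while $f(\nu)$ satisfies the hypothesis of Proposition~\ref{Z:ex}(1) (paired with $\mu$).

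For part (1), I would first invoke Proposition~\ref{pushforward} applied to the $\emptyset$-definable map $f$ to conclude $\mu \geq_{\mathbb{E},\emptyset} f(\mu)$; by Lemma~\ref{lem:ext-para} this upgrades to $\mu \geq_{\mathbb{E},\mathbb{Z}} f(\mu)$. Next, since both $f(\mu)$ and $\nu$ satisfy $\cdot(x<b)=1$ for all $b\in\mathcal{U}$, Proposition~\ref{Z:ex}(2) yields $f(\mu)\geq_{\mathbb{E},\mathbb{Z}} \nu$. Finally, transitivity (Proposition~\ref{transitive:1}) gives $\mu\geq_{\mathbb{E},\mathbb{Z}} \nu$.

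For part (2), the argument is completely symmetric: by Proposition~\ref{pushforward} (and Lemma~\ref{lem:ext-para}) we have $\nu \geq_{\mathbb{E},\mathbb{Z}} f(\nu)$, and since $f(\nu)$ and $\mu$ both satisfy $\cdot(x>b)=1$ for all $b\in\mathcal{U}$, Proposition~\ref{Z:ex}(1) gives $f(\nu)\geq_{\mathbb{E},\mathbb{Z}}\mu$; transitivity concludes. There is no serious obstacle here---the only thing one needs to verify beyond the easy pushforward computation above is that $f$ is indeed $\emptyset$-definable in the language of Presburger arithmetic, which is standard.
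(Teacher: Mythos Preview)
Your proposal is correct and follows essentially the same route as the paper: the paper also uses the involution $f(a)=-a$, observes that $f(\mu)$ is concentrated at $-\infty$, applies Proposition~\ref{pushforward} to get $\mu \geq_{\mathbb{E},\mathbb{Z}} f(\mu)$, then Proposition~\ref{Z:ex}(2) to get $f(\mu)\geq_{\mathbb{E},\mathbb{Z}}\nu$, and concludes by transitivity. The only cosmetic difference is that the paper applies Proposition~\ref{pushforward} directly with $A=\mathbb{Z}$ (since $f$ is $\mathbb{Z}$-definable) rather than going through $A=\emptyset$ and Lemma~\ref{lem:ext-para}.
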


\begin{proof} Again, the proofs are similar and so we only prove $(1)$. Consider the definable function $f:\mathcal{U}^{x} \to \mathcal{U}^{y}$ via $f(a) = -a$. Then $\mu \geq_{\mathbb{E},\mathbb{Z}} f(\mu)$ by Proposition~\ref{pushforward} and $f(\mu)(x<b) = 1$ for every $b \in \mathcal{U}$. By Proposition \ref{Z:ex}, we conclude that $f(\mu) \geq_{\mathbb{E},\mathbb{Z}} \nu$. By transitivity of extension domination, $\mu \geq_{\mathbb{E},\mathbb{Z}} \nu$. 
\end{proof}

\begin{corollary} Suppose that for every $b \in \mathcal{U}$, $\mu(x > b) = \eta(x > b) = \nu(x < b) =1$ and $r,s \in [0,1]$ such that $r + s =1$. Then $\mu \geq_{\mathbb{E},\mathbb{Z}} r\nu + s\eta$. 
\end{corollary}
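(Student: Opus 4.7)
The plan is to exhibit $\lambda := r\lambda_\nu + s\lambda_\eta$ as a witness of $\mu \geq_{\mathbb{E},\mathbb{Z}} r\nu + s\eta$, where $\lambda_\eta := \eta_y \otimes \mu_x|_\mathbb{Z}$ is the Morley-product witness of $\mu \geq_{\mathbb{E},\mathbb{Z}} \eta$ from Proposition~\ref{Z:ex}, and $\lambda_\nu$ is the witness of $\mu \geq_{\mathbb{E},\mathbb{Z}} \nu$ extracted from the transitivity proof of Corollary~\ref{Z:ex2}. The first step is to record the separating values: $\lambda_\eta(y > x) = 1$ by the Morley-product calculation, whereas $\lambda_\nu(y < -x) = 1$ (coming from the intermediate pushforward by $f(a) = -a$) together with $\pi_x(\lambda_\nu)(x > 0) = \mu(x > 0) = 1$ forces $\lambda_\nu(y < x) = 1$, hence $\lambda_\nu(y > x) = 0$. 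Consequently $\pi_x(\lambda) = \mu|_\mathbb{Z}$, $\lambda(y > x) = s$, $\lambda(y < x) = r$, and $\lambda(y = x) = 0$.

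Next I would fix an arbitrary $\omega \in \E(\lambda, \mu)$ and, assuming $r, s > 0$ (the extremes coincide with Proposition~\ref{Z:ex} or Corollary~\ref{Z:ex2}), form the localizations $\omega_> := \omega_{[y > x]}$ and $\omega_< := \omega_{[y < x]}$, giving the convex decomposition $\omega = r\omega_< + s\omega_>$ since $\omega(y = x) \leq \lambda(y = x) = 0$. For $\theta \in \mathcal{L}_{xy}(\mathbb{Z})$, Fact~\ref{Basic:Fact} combined with $\lambda_\eta(y > x) = 1$ and $\lambda_\nu(y > x) = 0$ immediately yields $\omega_>|_\mathbb{Z} = \lambda_\eta$ and, symmetrically, $\omega_<|_\mathbb{Z} = \lambda_\nu$.

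The main obstacle is verifying $\pi_x(\omega_>) = \mu$ (and $\pi_x(\omega_<) = \mu$), which I would handle via the auxiliary lemma that every $\varphi(x) \in \mathcal{L}_x(\mathcal{U})$ is $\mu$-equivalent to some $\tilde\varphi(x) \in \mathcal{L}_x(\mathbb{Z})$: by Presburger quantifier elimination, $\varphi$ is a Boolean combination of inequality atoms $x \lessgtr b$ (of $\mu$-measure $0$ or $1$ since $\mu$ concentrates at $+\infty$) and divisibility atoms $D_n(x + k)$ (reducible to $D_n(x + k_0)$ with $k_0 \in \{0, \ldots, n-1\} \subseteq \mathbb{Z}$ via Euclidean division). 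Since $\omega(\varphi \triangle \tilde\varphi) \leq \pi_x(\omega)(\varphi \triangle \tilde\varphi) = \mu(\varphi \triangle \tilde\varphi) = 0$, one obtains
\[
\omega(\varphi \wedge y > x) = \omega(\tilde\varphi \wedge y > x) = \lambda(\tilde\varphi \wedge y > x) = s\lambda_\eta(\tilde\varphi) = s\mu(\varphi),
\]
so $\pi_x(\omega_>)(\varphi) = \mu(\varphi)$, and analogously $\pi_x(\omega_<) = \mu$. Therefore $\omega_> \in \E(\lambda_\eta, \mu)$ and $\omega_< \in \E(\lambda_\nu, \mu)$, so the respective domination hypotheses give $\pi_y(\omega_>) = \eta$ and $\pi_y(\omega_<) = \nu$, whence $\pi_y(\omega) = r\nu + s\eta$ as required.
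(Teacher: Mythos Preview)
Your argument is correct, but it follows a genuinely different route from the paper's proof. The paper does not build a witness for $\mu \geq_{\mathbb{E},\mathbb{Z}} r\nu + s\eta$ directly: instead it interpolates through the Dirac measure $\delta_p$ for the unique type $p$ at $+\infty$. By Proposition~\ref{Z:ex} one has $\mu \geq_{\mathbb{E},\mathbb{Z}} \delta_p$ and $\delta_p \geq_{\mathbb{E},\mathbb{Z}} \eta$, and by Corollary~\ref{Z:ex2} one has $\delta_p \geq_{\mathbb{E},\mathbb{Z}} \nu$. Since $\supp(\eta)$ and $\supp(\nu)$ are separated by the $\mathbb{Z}$-formula $y>0$, Corollary~\ref{Ex:type} (the convex-combination result for Dirac measures, proved via amalgam deciphering) gives $\delta_p \geq_{\mathbb{E},\mathbb{Z}} r\nu + s\eta$, and transitivity finishes.

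Your approach is more hands-on: you take the convex combination $\lambda = r\lambda_\nu + s\lambda_\eta$ of the individual witnesses and verify it works by localizing an arbitrary $\omega \in \E(\lambda,\mu)$ at the separating $\mathbb{Z}$-formula $y\gtrless x$. The delicate step, which you handle correctly, is showing $\pi_x(\omega_>)=\pi_x(\omega_<)=\mu$; this relies on the Presburger-specific observation that every $\varphi(x)\in\mathcal{L}_x(\mathcal{U})$ is $\mu$-equivalent to a $\mathbb{Z}$-formula (interval atoms collapse because $\mu$ concentrates at $+\infty$, divisibility atoms are already $\mathbb{Z}$-definable after reducing constants mod $n$). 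The paper's route is shorter and more conceptual, reusing the general machinery of Section~\ref{sec:amalgam} rather than any new Presburger computation; your route is self-contained and avoids the detour through $\delta_p$ and amalgam deciphering, at the cost of an extra structure-specific lemma. Both are valid, and your argument is essentially a by-hand instance of the strategy behind Proposition~\ref{prop:conv}, adapted to extension domination by exploiting the special form of $\mu$.
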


\begin{proof} Let $p \in S_{x}(\mathcal{U})$ such that $x > b \in p$ for every $b \in \mathcal{U}$. 
By Proposition \ref{Z:ex}, $\mu \geq_{\mathbb{E},\mathbb{Z}} \delta_{p}$ and $\delta_{p} \geq_{\mathbb{E},\mathbb{Z}} \eta$. 
By Corollary \ref{Z:ex2}, $\delta_{p} \geq_{\mathbb{E},\mathbb{Z}} \nu$. 
Since $\supp(\eta) \cap \supp(\nu) = \emptyset$,  $\supp(\eta) \subset [x>0]$ and $\supp(\nu) \cap [x > 0] = \emptyset$, by Corollary \ref{Ex:type} $\delta_{p} \geq_{\mathbb{E},\mathbb{Z}} r\nu +s\eta$. By transitivity, we conclude that $\mu \geq_{\mathbb{E},\mathbb{Z}} r\nu + s\eta$.
\end{proof}

\subsection{Ordered equivalence relation}

The following example is slightly more complicated. We construct two global measures $\mu$ and $\nu$ which are generically stable and dominate one another.

\begin{definition} Suppose that $T$ is NIP and $\mu \in \mathfrak{M}_{x}(\mathcal{U})$ for some monster model $\mathcal{U}$. We say that $\mu$ is \textit{generically stable} if $\mu$ is definable and finitely satisfiable over some small model $M$. 
\end{definition}

\begin{lemma}\label{distal} Let $T$ be an NIP theory in the language $\mathcal{L}$ and let $T'$ be the reduct of $T$ to the language $\mathcal{L}'$ such that $T'$ is distal. Let $\mathcal{U} \models T$, $M$ is a small submodel of $\mathcal{U}$, and $\mu \in \mathfrak{M}_{x}(\mathcal{U})$ such that $\mu$ is generically stable over $M$.  Let $\mathcal{U}'$ and $M'$ be the associated models of the reduct. Then $\mu'=\mu|_{\mathcal{L}'(\mathcal{U'})}$ is smooth over $M'$. In particular, for any $\theta(x) \in \mathcal{L'}_{x}(\mathcal{U})$ and $\nu \in \mathfrak{M}_{x}(\mathcal{U})$, if $\nu|_M = \mu|_M$ then $\nu(\theta(x)) = \mu(\theta(x))$. 
\end{lemma}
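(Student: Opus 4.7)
My overall plan is to transfer generic stability from $\mu$ to the reduct $\mu'$ and then invoke the theorem (essentially due to Simon) that in a distal NIP theory every generically stable Keisler measure is smooth over its base of generic stability. Once $\mu'$ is established as smooth over $M'$, the ``in particular'' clause is immediate: for any $\nu \in \mathfrak{M}_{x}(\mathcal{U})$ with $\nu|_{M} = \mu|_{M}$, the restriction $\nu|_{\mathcal{L}'(\mathcal{U}')}$ is an $\mathcal{L}'$-measure whose restriction to $M'$ agrees with $\mu'|_{M'}$, and hence must coincide with $\mu'$ by uniqueness of the smooth extension; evaluating at $\theta(x)$ gives the stated equality.

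For finite satisfiability of $\mu'$ over $M'$, observe that $\mathcal{L}' \subseteq \mathcal{L}$ and that $M$ and $M'$ share an underlying set. Any $\mathcal{L}'$-formula $\varphi(x)$ with $\mu'(\varphi(x)) > 0$ satisfies $\mu(\varphi(x)) > 0$, so finite satisfiability of $\mu$ in $M$ supplies a witness in $M = M'$. This automatically yields $M'$-invariance of $\mu'$ in the reduct.

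For definability of $\mu'$ over $M'$, fix $\varphi(x;z) \in \mathcal{L}'_{xz}(\emptyset)$. The natural restriction map $r: S_{z}^{\mathcal{L}}(M) \to S_{z}^{\mathcal{L}'}(M')$ is a continuous surjection between compact Hausdorff spaces, hence a quotient map. By $M'$-invariance of $\mu'$, the function $F_{\mu',M'}^{\varphi}: S_{z}^{\mathcal{L}'}(M') \to [0,1]$ is well-defined and satisfies $F_{\mu',M'}^{\varphi} \circ r = F_{\mu,M}^{\varphi}$. Continuity of the right-hand side, guaranteed by $M$-definability of $\mu$, forces continuity of $F_{\mu',M'}^{\varphi}$ via the quotient property. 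Combined with the previous step, this shows $\mu'$ is generically stable over $M'$ in $T'$.

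The final step applies distality: since every generically stable measure in a distal theory is smooth over its base of generic stability, $\mu'$ is smooth over $M'$, and the conclusion follows. The main obstacle is invoking the correct measure-theoretic distality theorem — namely the measure analogue of the fact that in distal NIP theories, generically stable types are realized, strengthened to the statement that generically stable measures are smooth over their base. Once this external input is in place, the remainder of the argument is the bookkeeping outlined above.
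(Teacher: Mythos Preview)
Your proposal is correct and follows essentially the same route as the paper: transfer finite satisfiability and definability of $\mu$ to the reduct $\mu'$, then invoke Simon's theorem that in a distal theory generically stable measures are smooth. Your definability step is in fact slightly cleaner than the paper's---you use directly that the restriction $r:S_z^{\mathcal{L}}(M)\to S_z^{\mathcal{L}'}(M')$ is a quotient map and that $F_{\mu',M'}^{\varphi}\circ r=F_{\mu,M}^{\varphi}$, whereas the paper reaches the same factorization by citing an external result on $\varphi^*$-type definability of the fibers of $F_{\mu,M}^{\varphi}$ before applying the closed-map argument.
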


\begin{proof} We first claim that $\mu'$ is generically stable over $M'$. It suffices to show that $\mu'$ is both definable and finitely satisfiable over $M'$. 

\begin{enumerate}
    \item Claim 1: $\mu'$ is finitely satifiable over $M'$ follows directly by definition. 
    \item Claim 2: $\mu'$ is definable over $M'$. Note that by claim 1, $\mu'$ is invariant over $M'$. Hence for any formula $\varphi(x,z) \in \mathcal{L'}_{xz}(\emptyset)$, the map $F_{\mu',M'}^{\varphi'}: S_{z}(M') \to [0,1]$ is well-defined. Let $s:S_{z}(M) \to S_{z}(M')$ be the obvious restriction map. This map is continuous. By \cite[Proposition 2.9]{conant2020remarks}, for any closed $C\subseteq [0,1]$,  $(F_{\mu,M}^{\varphi})^{-1}(C)$ is $\varphi^{*}$-type definable. Hence for any $q_1,q_2 \in S_{z}(M)$ and $b_1 \models q_1$, $b_2 \models q_2$, if $\tp_{\varphi^{*}}(b_1/M) = \tp_{\varphi^{*}}(b_2/M)$, then $\mu(\varphi(x,b_1)) = \mu(\varphi(x,b_2))$. From this observation, one can check that $s((F^{\varphi}_{\mu,M})^{-1}(C)) = (F^{\varphi}_{\mu',M'})^{-1}(C)$. Notice that $s((F^{\varphi}_{\mu,M})^{-1}(C))$ is closed since it is the image of a compact set (which is closed). Therefore, $(F_{\mu',M'}^{\varphi})^{-1}(C)$ is closed and hence $F_{\mu',M'}^{\varphi}$ is continuous. 
\end{enumerate}
Hence $\mu'$ is generically stable over $M'$. Since $T'$ is distal, all generically stable measures are smooth \cite[Theorem 1.1]{simon2013distal} and so $\mu'$ is smooth over $M'$ \cite[Lemma 7.17]{simon2015guide}. The \textit{in particular} portion follows immediately since $\nu'|_{M'} = \mu'|_{M'}$ and $\mu'$ is smooth over $M'$. 
\end{proof}

\begin{example}\label{OER} Let $\mathcal{L} = \{<,E\}$ be the language of an \textit{ordered equivalence relation}. Then finite linear orders with an equivalence relation form a Fra\"{i}ss\'{e} class and we let $T_{OE}$ be the theory of the Fra\"{i}ss\'{e} limit. Let $\mathcal{U}$ be a monster model of $T_{OE}$ and $M$ a small elementary submodel. Notice the following:  
\begin{enumerate} 
\item $T_{OE}$ admits quantifier elimination. 
\item $T_{OE}$ restricted to the language $\{<\}$ is the theory of a dense linear order without endpoints. 
\item $T_{OE}$ restricted to the language $\{E\}$ is the theory of infinitely many equivalence classes all with infinitely many elements. 
\item For any $a \in \mathcal{U}$, the $E(x,a)$ is a dense-codense subset of $\mathcal{U}$. 
\item $T_{OE}$ is NIP. 
\end{enumerate} 
Let $p$ be the unique type in $S_{x}(\mathcal{U})$ extending the collection of formulas $\{\neg E(a,x): a \in \mathcal{U}\} \cup \{x > a: a \in \mathcal{U}\}$. Note that $p$ is definable over any model by quantifier elimination and so $p$ is definable over $M$. Let $I = (a_i)_{i \in [0,\infty)}$ be a Morley sequence in $p$ over $M$. By construction, we notice that: 

\begin{enumerate} 
\item $\mathcal{U} \models a_i < a_j$ if and only if $i < j$. 
\item $\mathcal{U} \models  E(a_i,a_j)$ if and only if $i= j$. 
\end{enumerate} 
We now construct generically stable (non-smooth) measures $\mu$ and $\nu$ in  $ \mathfrak{M}_{x}(\mathcal{U})$ such that $\mu \geq_{\mathbb{E}} \nu$ and $\nu \geq_{\mathbb{E}} \mu$.  Let $L$ denote the usual Lebesgue measure on $\mathbb{R}$. 
\begin{enumerate} 
\item For any $\varphi(x) \in \mathcal{L}_{x}(\mathcal{U})$, we define $\mu(\varphi(x)) = L(\{i \in [0,1]: \mathcal{U} \models \varphi(a_i)\})$. 
\item For any $\varphi(x) \in \mathcal{L}_{x}(\mathcal{U})$, we define $\nu(\varphi(x)) = L(\{i \in [2,3]: \mathcal{U} \models \varphi(a_i)\})$.
\end{enumerate} 
Since $T_{OE}$ is NIP and our sequence $(a_i)_{i \in [0,\infty)}$ is indiscernible, $\mu$ and $\nu$ are well-defined and generically stable (see~\cite[Example 7.32]{simon2015guide}). More explicitly, if we fix a small model $N$ such that $M \cup \{a_i: i \in [0,\infty)\} \subseteq N$ and $M \prec N \prec \mathcal{U}$, then both $\mu$ and $\nu$ are generically stable over $N$. We now argue that $\mu$ and $\nu$ are not smooth over any subset of $\mathcal{U}$. Let $\mathcal{U} \prec \mathcal{U}'$ and consider $b \in \mathcal{U}'$ such that $b \models p$.
\begin{enumerate}
\item Since $\mu$ is generically stable, $\mu$ admits a definable extension to $\mathcal{U}'$, namely $\mu'$. The measure $\mu'$ is still definable over $N$ and hence $\mu(E(x,b)) = 0$. 
\item Let $\varphi(x) = E(x,b)$. For any $\psi(x) \in \mathcal{L}_{x}(\mathcal{U})$ such that $\mathcal{U}' \models \varphi(x) \to \psi(x)$, it follows that there exists a finite (possibly empty) set $A$ such that $A \subseteq \mathcal{U}$ and $\psi(x) \equiv \bigwedge_{a \in A} x \neq a$. Hence, if follows that  $\inf\{\mu(\psi(x)): \psi(x) \in \mathcal{L}_{x}(\mathcal{U})\,\text{ and } \mathcal{U}' \models \varphi(x) \to \psi(x)\} = 1$. 

By construction, $\mu(\varphi(x)) = 1$. Likewise for any $\varphi(x) \in \mathcal{L}_{x}(\mathcal{U})$ such that $\mathcal{U}' \models \psi(x) \to \varphi(x)$, we have that $\varphi(x) \equiv x \neq x$ and so $\mu(\varphi(x)) = 0$. Therefore, for any $r \in [0,1]$, there exists a measure $\mu_{r} \in \mathfrak{M}_{x}(\mathcal{U}')$ such that $\mu_{r}|_{\mathcal{U}} = \mu$ and $\mu_{r}(E(x,b)) = r$ by Fact~\ref{fact:charges-r}. 
\end{enumerate} 
We claim that $\mu \geq_{\mathbb{E},N} \nu$ (and $\nu \geq_{\mathbb{E},N} \mu$). Choose $b \in \mathcal{U} \backslash N$. Then there exists unique global measures $\mu_{b}$ and $\nu_{b}$ such that 
\begin{enumerate}
    \item $\mu_{b}$ and $\nu_{b}$ are smooth over $Nb$.
    \item $\mu_{b}|_{\mathcal{L}_{<}} = \mu|_{\mathcal{L}_{<}}$ and $\mu(E(x,b)) = 1$. 
    \item $\nu_{b}|_{\mathcal{L}_{<}} = \nu|_{\mathcal{L}_{<}}$ and $\nu(E(x,b)) = 1$. 
    \item $\mu_{b}|_{N} = \mu|_{N}$ and $\nu_{b}|_{N} = \nu|_{N}$
\end{enumerate}
The measures $\mu_{b}$ and $\nu_{b}$ are \textit{smooth extensions} of $\mu|_{N}$ and $\nu|_{N}$ respectively. Since $\mu_{b}$ and $\nu_{b}$ are smooth, there exists a unique measure $\eta \in \mathfrak{M}_{xy}(\mathcal{U})$ such that $\eta$ is a separated amalgam of $\mu_{b}$ and $\nu_{b}$ by~\cite[Corollary 2.5]{NIP-III}. Moreover, $\eta = \mu_{b}(x) \otimes \nu_{b}(y)$ (by uniqueness). We let $\lambda := \eta|_{N}$. We show that $\lambda(E(x,y)) = 1$. Notice that 
\begin{equation*}
    \lambda(E(x,y)) = \mu_{b} \otimes \nu_{b}(E(x,y)) = \int_{S_{x}(Nb)} F_{\mu_{b}}^{E} d(\nu_{b}|_{N_b})
    \end{equation*}
Now, let $q \in \supp(\nu_{b}|_{Nb})$. Since $\nu(E(x,b)) = 1$, we know that $E(x,b) \in q$. So if $c \models q$, then $\models E(c,b)$. Moreover, since $E(x,y)$ is an equivalance relation, we conclude that $E(x,b)$ is equal to $E(x,c)$ \textit{as definable sets}. Therefore $F_{\mu_{b}}^{E}(q) = \mu(E(x,c)) = \mu(E(x,b)) = 1$. Hence
\begin{equation*}
    \int F_{\mu_{b}}^{E}d\nu_{b} = \int 1 d\nu_{b} = 1. 
\end{equation*}
We now show that $\lambda$ witnesses $\mu \geq_{\mathbb{E},N} \nu$. Fix $\omega \in \E(\lambda,\mu)$. First notice that for any $c \in \mathcal{U}$,
\begin{equation*}
    \pi_{y}(\omega)(E(y,c)) = \omega(E(y,c) \wedge x = x) = \omega(E(y,c) \wedge E(x,y)) 
\end{equation*}
\begin{equation*}
    = \omega(E(x,c) \wedge y = y) = \pi_{x}(\omega)(E(x,c)) = \mu(E(x,c)) = 0. 
\end{equation*}
We now use Lemma \ref{intersection} to show that $\pi_{y}(\omega) = \nu$. Consider
\begin{align*}
    \mathcal{B} := \Bigg\{ & \bigwedge_{j = 1}^{n} I_{j}^{\varepsilon_{1}(j)}(y) \wedge  \bigwedge_{i=1}^{m} E^{\varepsilon_{2}(i)}(y,c_i): n,m \in \mathbb{N}, c_i \in \mathcal{U}, \\ & \varepsilon_{1}:\{1,...,n\} \to \{0,1\}, \varepsilon_{2}: \{1,...,m\} \to \{0,1\} \Bigg\}, 
\end{align*}
where $I_{j}(y) \in \{y < b, b < y, b = y, b \neq y: b \in \mathcal{U}\}$. Let $\theta(y) \in \mathcal{B}$. Notice that if $\varepsilon_{2}(i) = 1$ for any $i \leq n$, then 
\begin{equation*}
    \pi_{y}(\omega)(\theta(y)) = \pi_{y}(\omega) \left( \bigwedge_{j=1}^{n} I_i^{\varepsilon_{1}(j)}(y) \wedge \bigwedge_{i=1}^{m} E^{\varepsilon_{2}(i)}(y,c_{i})\right) = 0
\end{equation*}
since $\pi_{y}(\omega)(E(y,c)) = 0$ for any $c \in \mathcal{U}$. Additionally, $\nu(\theta(y)) = 0$ by the same reasoning. Now assume that $\varepsilon_{2}(i) = 0$ for every $1 \leq i \leq n$. Then
\begin{equation*}
   \pi_{y}(\omega)(\theta(y)) =  \pi_{y}(\omega) \left( \bigwedge_{j=1}^{n} I^{\varepsilon_{1}(j)}_j(y) \wedge \bigwedge_{i=1}^{n} \neg E(y,c)\right) \overset{(*)}{=} \pi_{y}(\omega) \left( \bigwedge_{j=1}^{n} I_j^{\varepsilon_{1}(j)}(y) \right)
\end{equation*}
    
\begin{equation*}\overset{(\dagger)}{=} \nu\left( \bigwedge_{i=1}^{n} I^{\varepsilon_{1}(j)}_j(y) \right) \overset{(*)}{=} \nu\left( \bigwedge_{i=1}^{n} I^{\varepsilon_{1}(j)}_j(y) \wedge \bigwedge_{i=1}^{m}  \neg E(y,c_i) \right) = \nu(\theta(y)). 
\end{equation*}
Notice that $(\dagger)$ holds by the fact that $T|_{<}$ is distal, $\nu$ is generically stable over $N$, and $\pi_{y}(\omega)|_{N} = \pi_{y}(\omega|_N) = \pi_{y}(\lambda) = \nu|_{N}$ and so we may apply Lemma \ref{distal}. Notice that $(*)$ holds since $\pi_{y}(\omega)(\bigwedge_{i=1}^{m} \neg E(x,c_i)) = \nu(\bigwedge_{i=1}^{m} \neg E(x,c_i)) = 1$. 
\end{example}

\bibliographystyle{plain}
\bibliography{bibliography}

\end{document}